\newtheorem{theorem}{Theorem}[section]
\newtheorem{corollary}[theorem]{Corollary}
\newtheorem{lemma}[theorem]{Lemma}
\newtheorem{proposition}[theorem]{Proposition}
\theoremstyle{definition}
\newtheorem{definition}[theorem]{Definition}
\newtheorem{example}[theorem]{Example}
\newtheorem{problem}[theorem]{Problem}
\theoremstyle{remark}
\newtheorem{remark}[theorem]{Remark}
\numberwithin{equation}{section}
\DeclareMathOperator{\dom}{dom}
\newcommand{\RR}{\mathbb{R}}
\def\<#1>{\langle #1 \rangle}
\newbox\onebox
\newcommand{\coherent}[1]{\mathbin{\setbox\onebox=\hbox{$=$}\lower0.7\ht%
\onebox\hbox{$\stackrel{#1}{=}$}}}
\newcommand{\acr}{\newline\indent}
\begin{document}

\title[Combinatorial characterization]{Combinatorial characterization of pseudometrics}

\author{Oleksiy Dovgoshey}
\address{\textbf{Oleksiy Dovgoshey}\acr
Function theory department\acr
Institute of Applied Mathematics and Mechanics of NASU\acr
Dobrovolskogo str. 1, Slovyansk 84100, Ukraine}
\email{oleksiy.dovgoshey@gmail.com}

\author{Jouni Luukkainen}
\address{\textbf{Jouni Luukkainen}\acr
Department of Mathematics and Statistics\acr
University of Helsinki\acr
P.O. Box 68 (Pietari Kalmin katu 5)\acr
FI-00014 University of Helsinki, Finland}
\email{jouni.luukkainen@helsinki.fi}

\subjclass[2010]{Primary 54E35, Secondary 20M05.}
\keywords{pseudometric, strongly rigid metric, equivalence relation, semigroup of binary relations.}

\begin{abstract}
Let \(X\), \(Y\) be sets and let \(\Phi\), \(\Psi\) be mappings with the domains \(X^{2}\) and \(Y^{2}\) respectively. We say that \(\Phi\) is \emph{combinatorially similar} to \(\Psi\) if there are bijections \(f \colon \Phi(X^2) \to \Psi(Y^{2})\) and \(g \colon Y \to X\) such that \(\Psi(x, y) = f(\Phi(g(x), g(y)))\) for all \(x\), \(y \in Y\). It is shown that the semigroups of binary relations generated by sets \(\{\Phi^{-1}(a) \colon a \in \Phi(X^{2})\}\) and \(\{\Psi^{-1}(b) \colon b \in \Psi(Y^{2})\}\) are isomorphic for combinatorially similar \(\Phi\) and \(\Psi\). The necessary and sufficient conditions under which a given mapping is combinatorially similar to a pseudometric, or strongly rigid pseudometric, or discrete pseudometric are found. The algebraic structure of semigroups generated by \(\{d^{-1}(r) \colon r \in d(X^{2})\}\) is completely described for nondiscrete, strongly rigid pseudometrics and, also, for discrete pseudometrics \(d \colon X^{2} \to \mathbb{R}\).
\end{abstract}

\maketitle

\section{Introduction}

Recall some definitions from the theory of metric spaces.

A \textit{metric} on a set \(X\) is a function \(d\colon X^{2} \to \RR\) such that for all \(x\), \(y\), \(z \in X\):
\begin{enumerate}
\item \(d(x,y) \geqslant 0\) with equality if and only if \(x=y\), the \emph{positivity property};
\item \(d(x,y)=d(y,x)\), the \emph{symmetry property};
\item \(d(x, y)\leq d(x, z) + d(z, y)\), the \emph{triangle inequality}.
\end{enumerate}

A useful generalization of the concept of metric is the concept of pseudometric.

\begin{definition}\label{ch2:d2}
Let \(X\) be a set and let \(d \colon X^{2} \to \RR\) be a nonnegative, symmetric function such that \(d(x, x) = 0\) holds for every \(x \in X\). The function \(d\) is a \emph{pseudometric} on \(X\) if it satisfies the triangle inequality.
\end{definition}

If \(d\) is a pseudometric on \(X\), we say that \((X, d)\) is a \emph{pseudometric} \emph{space}.

Let \((X, d)\) and \((Y, \rho)\) be pseudometric spaces. Then \((X, d)\) and \((Y, \rho)\) are \emph{isometric} if there is a bijective mapping \(\Phi \colon X \to Y\), an \emph{isometry} of \(X\) and \(Y\), such that
\[
\rho(\Phi(x), \Phi(y)) = d(x, y)
\]
holds for all \(x\), \(y \in X\). This classic (when \(d\) and \(\rho\) are metrics) concept has numerous generalizations. Recall two such generalizations which are closest to the object of the present research.

For pseudometric spaces \((X, d)\) and \((Y, \rho)\), a mapping \(\Phi \colon X \to Y\) is a \emph{similarity} if \(\Phi\) is bijective and there is \(r > 0\), the ratio of \(\Phi\), such that
\[
\rho(\Phi(x), \Phi(y)) = rd(x, y)
\]
holds for all \(x\), \(y \in X\). A bijective mapping \(F \colon X \to Y\) is a \emph{weak similarity} if there is a strictly increasing function \(f \colon \rho(Y^{2}) \to d(X^{2})\) such that the equality
\begin{equation}\label{e1.2}
d(x, y) = f(\rho(F(x), F(y)))
\end{equation}
holds for all \(x\), \(y \in X\). The function \(f\) is said to be a scaling function of \(F\).

It is clear that a similarity \(\Phi\) is an isometry if the ratio of \(\Phi\) is \(1\). Analogically, a weak similarity \(F\) is a similarity if the scaling function of \(F\) is linear.

The following definition can be considered as a further generalization of the concept of similarity.

\begin{definition}\label{d1.2}
Let \((X, d)\) and \((Y, \rho)\) be pseudometric spaces. The pseudometrics \(d\) and \(\rho\) are \emph{combinatorially similar} if there are bijections \(g \colon Y \to X\) and \(f \colon d(X^{2}) \to \rho(Y^{2})\) such that
\begin{equation}\label{d1.2:e1}
\rho(x,y) = f(d(g(x), g(y)))
\end{equation}
holds for all \(x\), \(y \in Y\). In this case, we will say that \(g \colon Y \to X\) is a \emph{combinatorial similarity}.
\end{definition}

It is easy to prove that, for every weak similarity, the scaling function is bijective (see \cite[Corollary~1.4]{DP} for the proof of this fact in the case of metric spaces). Consequently, we have the following chain of implications (see Figure~\ref{f1.1}).

Let us expand now the concept of combinatorially similar pseudometrics to the concept of \emph{combinatorially similar functions of two variables}.

\begin{definition}\label{d2.17}
Let \(X\), \(Y\) be sets and let \(\Phi\), \(\Psi\) be mappings with the domains \(X^{2}\), \(Y^{2}\) respectively. The mapping \(\Phi\) is \emph{combinatorially similar} to \(\Psi\) if there are bijections \(f \colon \Phi(X^2) \to \Psi(Y^{2})\) and \(g \colon Y \to X\) such that
\begin{equation}\label{d2.17:e1}
\Psi(x, y) = f(\Phi(g(x), g(y)))
\end{equation}
holds for all \(x\), \(y \in Y\).
\end{definition}

\begin{figure}[h]
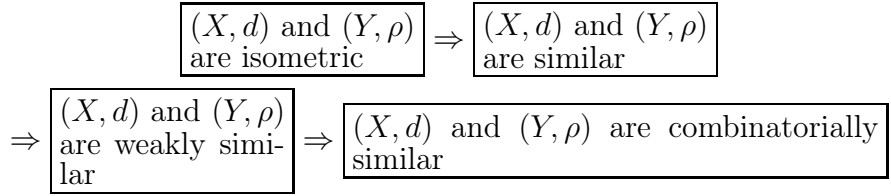

\begin{gather*}
\fbox{\parbox{3cm}{\((X, d)\) and \((Y, \rho)\) are isometric}} \Rightarrow \fbox{\parbox{3cm}{\((X, d)\) and \((Y, \rho)\) are similar}} \\
\Rightarrow \fbox{\parbox{3cm}{\((X, d)\) and \((Y, \rho)\) are weakly similar}} \Rightarrow \fbox{\parbox{7cm}{\((X, d)\) and \((Y, \rho)\) are combinatorially similar}}
\end{gather*}
\caption{From isometry to combinatorial similarity}\label{f1.1}
\end{figure}

Equality~\eqref{d2.17:e1} means that the diagram
\begin{equation}\label{eq1.3}
\ctdiagram{
\ctv 0,25:{X^{2}}
\ctv 100,25:{Y^{2}}
\ctv 0,-25:{\Phi(X^{2})}
\ctv 100,-25:{\Psi(Y^{2})}
\ctet 100,25,0,25:{g\otimes g}
\ctet 0,-25,100,-25:{f}
\ctel 0,25,0,-25:{\Phi}
\cter 100,25,100,-25:{\Psi}
}
\end{equation}
is commutative, where we use the denotation 
\[
(g\otimes g)(\<y_1, y_2>) := \<g(y_1), g(y_2)>
\]
with \(\<y_1, y_2> \in Y^{2}\) and write
\begin{align*}
\Psi(Y^{2}) &:= \{\Psi(y_1, y_2) \colon \<y_1, y_2> \in Y^{2}\},\\
\Phi(X^{2}) &:= \{\Phi(x_1, x_2) \colon \<x_1, x_2> \in X^{2}\}.
\end{align*}

The paper is organized as follows.

In Section~2 we briefly describe the well-known transition from pseudometrics to the metrics corresponding to them and introduce the basic concept of \(a_0\)-coherent mappings. This concept can be considered as an axiomatic description of those properties of pseudometrics which make such a transition correct. The main result of Section~\(2\) is Theorem~\ref{p2.7} describing the concept of \(a_0\)-coherent mappings in terms of semigroups of binary relations generated by fibers of these mappings. 

The main result of Section~3, Theorem~\ref{ch2:p7}, contains a necessary and sufficient condition under which a mapping is combinatorially similar to a pseudometric. 
Analyzing the proof of this theorem, we show that a mapping is combinatorially similar to a pseudometric if and only if it is combinatorially similar to a Ptolemaic pseudometric (Corollary~\ref{c2.20} and Corollary~\ref{c3.3}).
A combinatorial characterization of a discrete pseudometric is given in Theorem~\ref{t3.6}.
It is shown in Proposition~\ref{p3.8} that, for a finite set \(X\), the maximum number of discrete pseudometrics on \(X\) which are pairwise not combinatorially similar equals the number of distinct representations of \(|X|\) as a sum of positive integers.

The fourth section of the paper mainly deals with the strongly rigid pseudometrics. In Proposition~\ref{p4.4} and Theorem~\ref{p4.6} we characterize the mappings which are combinatorially similar to these pseudometrics. Proposition~\ref{p4.14} claims that, for combinatorially similar mappings \(\Phi\) and \(\Psi\) with \(\dom \Phi = X^{2}\) and \(\dom \Psi = Y^{2}\), the semigroups of binary relations \(\mathcal{B}_X (P_{\Phi^{-1}})\) and \(\mathcal{B}_Y (P_{\Psi^{-1}})\) generated by fibers of \(\Phi\) and \(\Psi\) are isomorphic.

For the case when \(d \colon X^{2} \to \RR\) is a strongly rigid and nondiscrete pseudometric, a purely algebraic characterization of \(\mathcal{B}_{X}(P_{d^{-1}})\) is presented in Theorem~\ref{t4.15}. For a discrete \(d\) the algebraic structure of \(\mathcal{B}_{X}(P_{d^{-1}})\) is completely described in Proposition~\ref{p4.22}. Theorem~\ref{ch2:th6} contains an algebraic characterization of the semigroup generated by all one-point subsets of \(X^{2}\).

\section{Pseudometrics and equivalence relations}

The main purpose of the present section is to describe a property of pseudometrics that characterizes them within a combinatorial similarity in Theorem~\ref{ch2:p7}.

Let \(X\) be a set. A \emph{binary relation} on \(X\) is a subset of the Cartesian square
\[
X^2 = X \times X = \{\<x, y>\colon x, y \in X\}.
\]
A binary relation \(R \subseteq X^{2}\) is an \emph{equivalence relation} on \(X\) if the following conditions hold for all \(x\), \(y\), \(z \in X\):
\begin{enumerate}
\item \(\<x, x> \in R\), the \emph{reflexivity} property;
\item \((\<x, y> \in R) \Leftrightarrow (\<y, x> \in R)\), the \emph{symmetry} property;
\item \(((\<x, y> \in R) \text{ and } (\<y, z> \in R)) \Rightarrow (\<x, z> \in R)\), the \emph{transitivity} property.
\end{enumerate}

If \(R\) is an equivalence relation on \(X\), then an \emph{equivalence class} is a subset \([a]_R\) of \(X\) having the form
\begin{equation}\label{e1.1}
[a]_R = \{x \in X \colon \<x, a> \in R\}
\end{equation}
for some \(a \in X\). The \emph{quotient set} of \(X\) with respect to \(R\) is the set of all equivalence classes \([a]_R\), \(a \in X\).

For every pseudometric space \((X, d)\), we define a binary relation \(\coherent{0}\) on \(X\) as
\begin{equation}\label{ch2:p1:e1}
(x \coherent{0} y) \Leftrightarrow (d(x, y) = 0),
\end{equation}
where, as usual, the formula \(x \coherent{0} y\) means that the ordered pair \(\<x, y>\) belongs to the relation \(\coherent{0}\).

The following proposition is an initial point of our consideration.

\begin{proposition}\label{ch2:p1}
Let \(X\) be a nonempty set and let \(d \colon X^{2} \to \RR\) be a pseudometric on \(X\). Then \({\coherent{0}}\) is an equivalence relation on \(X\) and the function \(\tilde{d}\),
\begin{equation}\label{ch2:p1:e2}
\tilde{d}(\alpha, \beta) := d(x, y), \quad x \in \alpha \in X/{\coherent{0}}, \quad y \in \beta \in X/{\coherent{0}},
\end{equation}
is a correctly defined metric on \(X/{\coherent{0}}\), where \(X/{\coherent{0}}\) is the quotient set of \(X\) with respect to \(\coherent{0}\).
\end{proposition}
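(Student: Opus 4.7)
The plan is to verify, in turn, the three claims packaged into the proposition: that $\coherent{0}$ is an equivalence relation, that the formula \eqref{ch2:p1:e2} is independent of the choice of representatives, and that the resulting function $\tilde d$ satisfies the metric axioms.

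First, I would establish that $\coherent{0}$ is an equivalence relation directly from Definition~\ref{ch2:d2}. Reflexivity is immediate from $d(x,x)=0$; symmetry follows from $d(x,y)=d(y,x)$. For transitivity, suppose $x\coherent{0}y$ and $y\coherent{0}z$, so $d(x,y)=d(y,z)=0$. The triangle inequality yields $0\le d(x,z)\le d(x,y)+d(y,z)=0$, hence $d(x,z)=0$, i.e. $x\coherent{0}z$.

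Next I would verify correctness of \eqref{ch2:p1:e2}. Fix equivalence classes $\alpha,\beta\in X/{\coherent{0}}$ and take $x,x'\in\alpha$ and $y,y'\in\beta$. Then $d(x,x')=d(y,y')=0$, and two applications of the triangle inequality give
\[
d(x,y)\le d(x,x')+d(x',y')+d(y',y)=d(x',y'),
\]
and symmetrically $d(x',y')\le d(x,y)$. Hence $d(x,y)=d(x',y')$, so $\tilde d(\alpha,\beta)$ does not depend on the representatives chosen.

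Finally, I would check the metric axioms for $\tilde d$. Nonnegativity, symmetry, and the triangle inequality transfer verbatim from $d$ via the representatives. The only point requiring genuine argument is the ``if'' direction of the positivity axiom: if $\tilde d(\alpha,\beta)=0$, then for any representatives $x\in\alpha$, $y\in\beta$ we have $d(x,y)=0$, i.e.\ $x\coherent{0}y$, which by definition of the equivalence classes \eqref{e1.1} means $\alpha=\beta$; conversely, if $\alpha=\beta$, picking $x=y$ gives $\tilde d(\alpha,\alpha)=d(x,x)=0$. The only step with any genuine content is the independence of representatives in \eqref{ch2:p1:e2}; everything else is a direct transcription of the pseudometric axioms.
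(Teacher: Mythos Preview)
Your argument is correct and is the standard direct verification of this classical fact. The paper itself does not supply a proof but simply refers the reader to \cite[Ch.~4, Th.~15]{Kelley1965}; your write-up is precisely the routine check one finds there, so there is nothing to contrast.
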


The proof of Proposition~\ref{ch2:p1} can be found in \cite[Ch.~4, Th.~15]{Kelley1965}.
\medskip

In what follows we will sometimes say that \(\tilde{d} \colon (X / \coherent{0})^{2} \to \RR\) is the \emph{metric identification} of a pseudometric \(d \colon X^{2} \to \RR\).

Let us now consider some examples of pseudometrics and equivalence relations corresponding to them.

\begin{example}\label{ch2:ex6}
Let \(X\) be a nonempty set. The Cartesian square \(X^2\) is an equivalence relation on \(X\). It is easy to see that the \emph{zero pseudometric} \(d \colon X^{2} \to \RR\), \(d(x, y) = 0\) for all \(x\), \(y \in X\), is the unique pseudometric on \(X\) satisfying the equality
\[
X^{2} = \coherent{0}.
\]
\end{example}

\begin{example}\label{ch2:ex7}
Let \(X\) be a nonempty set and let 
\[
\Delta_X := \{\<x, x> \colon x \in X\}
\]
be the diagonal on \(X\). Then, for a pseudometric \(d \colon X^2 \to \RR\), the equality
\[
\Delta_X = \coherent{0}
\]
holds if and only if \(d\) is a metric.
\end{example}

\begin{example}\label{ex2.4}
Let \(X\) be a nonempty set and let \(\mathcal{PM}_X\) be the set of all pseudometrics on \(X\). For \(d_1\), \(d_2 \in \mathcal{PM}_X\) we write \(d_1 \approx d_2\) if \(d_1\) and \(d_2\) are combinatorially similar (see Definition~\ref{d1.2}). Then \(\approx\) is an equivalence relation on \(\mathcal{PM}_X\). Indeed, it follows directly from the definition that \(\approx\) is reflexive and symmetric. Now if we have \(d_1 \approx d_2\) and \(d_2 \approx d_3\), then the diagrams
\[
\ctdiagram{
\ctv 0,25:{X^{2}}
\ctv 100,25:{X^{2}}
\ctv 0,-25:{d_3(X^{2})}
\ctv 100,-25:{d_2(X^{2})}
\ctet 100,25,0,25:{g_2\otimes g_2}
\ctet 0,-25,100,-25:{f_2}
\ctel 0,25,0,-25:{d_3}
\cter 100,25,100,-25:{d_2}
} \qquad
\ctdiagram{
\ctv 0,25:{X^{2}}
\ctv 100,25:{X^{2}}
\ctv 0,-25:{d_2(X^{2})}
\ctv 100,-25:{d_1(X^{2})}
\ctet 100,25,0,25:{g_1\otimes g_1}
\ctet 0,-25,100,-25:{f_1}
\ctel 0,25,0,-25:{d_2}
\cter 100,25,100,-25:{d_1}
}
\]
are commutative, where \(g_1\), \(g_2\) and \(f_1\), \(f_2\) are defined as in~\eqref{eq1.3}. Consequently, the diagram
\[
\ctdiagram{
\ctv 0,25:{X^{2}}
\ctv 100,25:{X^{2}}
\ctv 200,25:{X^{2}}
\ctv 0,-25:{d_3(X^{2})}
\ctv 100,-25:{d_2(X^{2})}
\ctv 200,-25:{d_1(X^{2})}
\ctet 100,25,0,25:{g_2\otimes g_2}
\ctet 200,25,100,25:{g_1\otimes g_1}
\ctet 0,-25,100,-25:{f_2}
\ctet 100,-25,200,-25:{f_1}
\ctel 0,25,0,-25:{d_3}
\cter 200,25,200,-25:{d_1}
}
\]
is also commutative. It implies that \(\approx\) is transitive. Thus \(\approx\) is an equivalence relation on \(\mathcal{PM}_X\).

The equality \(\approx = \coherent{0}\) holds for the pseudometric \(\rho \colon (\mathcal{PM}_X)^2 \to \RR\) defined as
\[
\rho(d_1, d_2) := \begin{cases}
0, & \text{if } d_1 \approx d_2\\
1, & \text{otherwise}.
\end{cases}
\]
(An analogous formula could be used in the proof of Corollary~\ref{ch2:p2}.)
\end{example}

Let \(X\) be a nonempty set, let \(\Phi\) be a mapping with the domain \(X^2\) and let \(a_0 \in \Phi(X^2)\). In what follows we denote by \({\coherent{a_0}}\) the binary relation on \(X\) for which
\[
(x_1 \coherent{a_0} x_2) \Leftrightarrow (\Phi(x_1, x_2) = a_0)
\]
is valid for all \(x_1\), \(x_2 \in X\), i.e., 
\begin{equation}\label{ch2:d4:e1}
\coherent{a_0} = \Phi^{-1}(a_0).
\end{equation}

\begin{definition}\label{ch2:d4}
\(\Phi\) is \(a_0\)--\emph{coherent} if \(\coherent{a_0}\) is an equivalence relation and the implication
\begin{equation}\label{ch2:d4:e2}
\left((x_1 \coherent{a_0} x_2) \mathbin{\&} (x_3 \coherent{a_0} x_4)\right) \Rightarrow (\Phi(x_1, x_3) = \Phi(x_2, x_4))
\end{equation}
is valid for all \(x_1\), \(x_2\), \(x_3\), \(x_4 \in X\).
\end{definition}

The notion of \(a_0\)-coherent mappings can be considered as a special case of the notion of functions which are consistent with a given equivalence relation (see~\cite[p.~78]{KurMost}). 

\begin{example}\label{ch2:ex5}
From Proposition~\ref{ch2:p1} it follows that every pseudometric \(d\) is \(0\)-coherent. 
\end{example}

\begin{remark}\label{r2.9}
In the context of Proposition~\ref{ch2:p1} the statement
\begin{itemize}
\item \(d\) is \(0\)-coherent
\end{itemize}
means that
\begin{itemize}
\item the metric identification \(\tilde{d}\) of \(d\) is correctly defined by formula~\eqref{ch2:p1:e2}.
\end{itemize}
\end{remark}

\begin{example}\label{ch2:ex10}
Let \(X\) and \(Y\) be sets, let \(a_0 \in Y\) and let \(\Phi \colon X^{2} \to Y\) be a mapping such that
\[
\Phi^{-1}(a_0) = \Delta_X = \{\<x, x> \colon x \in X\}.
\]
Then \(\Phi\) is \(a_0\)-coherent.
\end{example}

\begin{example}\label{ch2:ex8}
Let \(X = \{0, 1\}\) and \(Y = \{a_0, a_1\}\) with \(a_0 \neq a_1\) and let \(\Phi \colon X^{2} \to Y\) be a function defined so that
\[
\Phi(x, y) = \begin{cases}
a_0, & \text{if } \<x, y> = \<0, 0>\\
a_1, & \text{if } \<x, y> \neq \<0, 0>.
\end{cases}
\]
It is easy to see that \eqref{ch2:d4:e2} is valid for all \(x_1\), \(x_2\), \(x_3\), \(x_4 \in X\) but \(\coherent{a_0}\) is not an equivalence relation on \(X\) because \(\coherent{a_0}\) is not reflexive. Consequently, \(\Phi\) is not \(a_0\)-coherent.
\end{example}

The following lemma shows that the property ``to be \(a_0\)-coherent for a suitable \(a_0\)'' is invariant under combinatorial similarities.

\begin{lemma}\label{l2.7}
Let \(X\), \(Y\) be nonempty sets and let \(\Phi\), \(\Psi\) be mappings with the domains \(X^{2}\), \(Y^{2}\) respectively. If the mapping \(\Phi\) is \(a_0\)-coherent and 
\[
f \colon \Phi(X^{2}) \to \Psi(Y^{2}), \quad g \colon Y \to X
\]
are bijections for which diagram~\eqref{eq1.3} is commutative, then the mapping \(\Psi\) is \(f(a_0)\)-coherent.
\end{lemma}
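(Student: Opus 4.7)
The plan is to transport the $a_0$-coherence of $\Phi$ along the bijections $f$ and $g$. The commutativity of diagram~\eqref{eq1.3} gives $\Psi(y_1, y_2) = f(\Phi(g(y_1), g(y_2)))$ for all $y_1, y_2 \in Y$, and since $f$ is a bijection, the key identification
\[
\Psi(y_1, y_2) = f(a_0) \iff \Phi(g(y_1), g(y_2)) = a_0
\]
holds. In the notation of~\eqref{ch2:d4:e1}, this says that the relation $\coherent{f(a_0)}$ on $Y$ associated with $\Psi$ is the $(g\otimes g)$-preimage of the relation $\coherent{a_0}$ on $X$ associated with $\Phi$.

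First I would check that $\coherent{f(a_0)}$ is an equivalence relation on $Y$. Since $g$ is a bijection and $\coherent{a_0}$ is an equivalence relation on $X$ by the $a_0$-coherence of $\Phi$, each of reflexivity, symmetry, and transitivity for $\coherent{f(a_0)}$ follows immediately from the corresponding property of $\coherent{a_0}$ applied to $g(y_1), g(y_2), g(y_3) \in X$. For example, reflexivity: $g(y) \coherent{a_0} g(y)$ holds for every $y \in Y$, so $y \coherent{f(a_0)} y$; and transitivity uses $g(y_1) \coherent{a_0} g(y_2)$ and $g(y_2) \coherent{a_0} g(y_3)$ $\Rightarrow$ $g(y_1) \coherent{a_0} g(y_3)$.

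Next I would verify the implication~\eqref{ch2:d4:e2} for $\Psi$ at $f(a_0)$. Assume $y_1 \coherent{f(a_0)} y_2$ and $y_3 \coherent{f(a_0)} y_4$ in $Y$. By the identification above, $g(y_1) \coherent{a_0} g(y_2)$ and $g(y_3) \coherent{a_0} g(y_4)$ in $X$, whence the $a_0$-coherence of $\Phi$ gives $\Phi(g(y_1), g(y_3)) = \Phi(g(y_2), g(y_4))$. Applying the function $f$ to both sides and using the commutativity of diagram~\eqref{eq1.3} yields $\Psi(y_1, y_3) = \Psi(y_2, y_4)$, which is exactly what is needed.

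There is no real obstacle here: the argument is essentially ``pull back everything through the bijection $g\otimes g$ and post-compose with $f$'', and bijectivity of $f$ and $g$ is what makes each implication reversible. The only care required is to separate the two defining conditions of $a_0$-coherence (equivalence relation plus the implication~\eqref{ch2:d4:e2}) and treat them one at a time, since both must be verified to invoke Definition~\ref{ch2:d4} for $\Psi$ at $f(a_0)$.
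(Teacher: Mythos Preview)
Your argument is correct and is precisely the straightforward verification the paper has in mind; note that the paper itself omits the proof, stating only ``The proof is straightforward and we omit it here.'' Your separation into the two parts of Definition~\ref{ch2:d4} and the use of the bijectivity of \(f\) and \(g\) to transport each property is exactly the intended routine check.
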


The proof is straightforward and we omit it here.
\medskip

The concept of combinatorial similarity can be also given in the language of the theory of semigroups.

Recall that a \emph{semigroup} is a pair \((\mathcal{S}, *)\) consisting of a nonempty set \(\mathcal{S}\) and an associative operation \(* \colon \mathcal{S} \times \mathcal{S} \to \mathcal{S}\), which is called the \emph{multiplication} on \(\mathcal{S}\).

A semigroup \(\mathcal{S} = (\mathcal{S}, *)\) is a \emph{monoid} if there is \(e \in \mathcal{S}\), then unique, such that
\[
e*s = s*e = s
\]
for every \(s \in \mathcal{S}\). In this case we say that \(e\) is the \emph{identity element} of the semigroup \((\mathcal{S}, *)\). A set \(G \subseteq \mathcal{S}\) is a \emph{set of generators} of \((\mathcal{S}, *)\) if, for every \(s \in \mathcal{S}\), there is a finite sequence \(g_1, \ldots, g_k\) of elements of \(G\) such that
\[
s = g_1 * \ldots * g_k.
\]

If \(A\) is a nonempty subset of a semigroup \((\mathcal{S}, *)\), \(A \subseteq \mathcal{S}\), we will denote by \(\mathcal{S}(A)\) the subsemigroup of \((\mathcal{S}, *)\) having \(A\) as a set of generators.

Let \(X\) be a set. The \emph{composition} \(\circ\) of binary relations \(\psi\), \(\gamma \subseteq X^{2}\) is a binary relation \(\psi \circ \gamma \subseteq X \times X\) for which \(\<x, y> \in \psi \circ \gamma\) holds if and only if there is \(z \in X\) such that \(\<x, z> \in \psi\) and \(\<z, y> \in \gamma\). It is well-known that the operation \(\circ\) is associative. Thus, the set \(\mathcal{B}_X\) of all binary relations on \(X\) together with the multiplication \(\circ\) is a semigroup.

Let \(Y\) be a nonempty set and let \(F\) be a mapping with the domain \(Y\). In what follows we will write
\begin{equation}\label{e4.4}
P_{F^{-1}} := \{F^{-1}(a) \colon a \in F(Y)\}
\end{equation}
for the set of fibers of \(F\). In particular, if \(Y\) is the Cartesian square of a set \(X\), \(Y = X^{2}\), then the set \(P_{F^{-1}}\) is a subset of \(\mathcal{B}_X\).

\begin{theorem}\label{p2.7}
Let \(X\) be a nonempty set, let \(\Phi\) be a mapping with the domain \(X^{2}\) and let \(a_0 \in \Phi(X^{2})\). Then the following statements are equivalent:
\begin{enumerate}
\item \label{p2.7:s1} \(\Phi\) is \(a_0\)-coherent. 
\item \label{p2.7:s2} \(\mathcal{B}_X(P_{\Phi^{-1}})\) is a monoid and \(\Phi^{-1}(a_0)\) is the identity element of this monoid.
\end{enumerate}
\end{theorem}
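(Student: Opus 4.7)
The plan is to analyze both implications by translating the identity conditions $E \circ R = R \circ E = R$ (with $E := \Phi^{-1}(a_0)$) into pointwise statements about $\Phi$.

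For (i) $\Rightarrow$ (ii): Assume $\Phi$ is $a_0$-coherent. Since $E \in P_{\Phi^{-1}}$ is a generator and every element of $\mathcal{B}_X(P_{\Phi^{-1}})$ is a composition of generators, it suffices to verify $E \circ \Phi^{-1}(a) = \Phi^{-1}(a) = \Phi^{-1}(a) \circ E$ for every $a \in \Phi(X^2)$. The inclusion $E \circ \Phi^{-1}(a) \subseteq \Phi^{-1}(a)$ is exactly \eqref{ch2:d4:e2} applied to the quadruple $(x,z,y,y)$, using $\Phi(y,y)=a_0$; the reverse inclusion uses the intermediate point $z:=x$ together with reflexivity of $E$. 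The analogue for $\Phi^{-1}(a) \circ E$ is symmetric. Absorbing $E$ into either end of a composition then shows that $E$ is a two-sided identity on the whole subsemigroup.

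For (ii) $\Rightarrow$ (i): Assume $E$ is the identity. Unpacking $E \circ \Phi^{-1}(a) \subseteq \Phi^{-1}(a)$ over every $a$ yields the ``row rule''
\[
\Phi(x,z) = a_0 \;\Longrightarrow\; \Phi(x,y) = \Phi(z,y) \text{ for every } y \in X,
\]
and, symmetrically, $\Phi^{-1}(a) \circ E \subseteq \Phi^{-1}(a)$ yields the ``column rule''
\[
\Phi(w,y) = a_0 \;\Longrightarrow\; \Phi(u,w) = \Phi(u,y) \text{ for every } u \in X.
\]
The opposite inclusion $\Phi^{-1}(\Phi(x,x)) \subseteq E \circ \Phi^{-1}(\Phi(x,x))$ applied to $\langle x,x\rangle$ manufactures, for every $x \in X$, some $z$ with $\Phi(x,z) = a_0$. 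From these three ingredients everything falls out: reflexivity of $E$ follows by applying the column rule with $w=x$, $y=z$, $u=x$, giving $\Phi(x,x) = \Phi(x,z) = a_0$; symmetry follows from the row rule with $u=x$ and reflexivity; transitivity is immediate from the row rule; and the coherence implication \eqref{ch2:d4:e2} is obtained by chaining row then column, $\Phi(x_1,x_3) = \Phi(x_2,x_3) = \Phi(x_2,x_4)$.

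The delicate point is reflexivity in the direction (ii)$\Rightarrow$(i): the idempotency $E \circ E = E$ by itself only yields transitivity, so one genuinely has to exploit $E \circ R = R$ for generators $R$ other than $E$, together with the opposite inclusion $R \subseteq E \circ R$ that produces right $E$-neighbors. Once $\Phi(x,x)=a_0$ is pinned down, symmetry, transitivity and coherence are essentially formal consequences of the two commutation rules.
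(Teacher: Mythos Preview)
Your proof is correct and follows the same overall strategy as the paper's: translate the monoid identity condition into the fiber inclusions \(E\circ\Phi^{-1}(a)=\Phi^{-1}(a)=\Phi^{-1}(a)\circ E\) and then verify the equivalence-relation axioms and \eqref{ch2:d4:e2}. The organization differs in one pleasant respect: you isolate the ``row rule'' and ``column rule'' once and derive reflexivity, symmetry, transitivity, and coherence \emph{directly} from them, whereas the paper argues reflexivity and symmetry separately by contradiction, in each case landing an element in the intersection of two distinct fibers. Your direct route to reflexivity --- manufacture \(z\) with \(\Phi(x,z)=a_0\) from \(\Phi^{-1}(\Phi(x,x))\subseteq E\circ\Phi^{-1}(\Phi(x,x))\), then apply the column rule with \(w=x\), \(y=z\), \(u=x\) --- is a little cleaner than the paper's contradiction argument, and it makes transparent exactly which inclusions are used where. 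One small slip: in your symmetry step ``row rule with \(u=x\)'' there is no variable \(u\) in the row rule; you mean \(y=x\), so that \(\Phi(x,z)=a_0\) yields \(\Phi(x,x)=\Phi(z,x)\), and reflexivity then gives \(\Phi(z,x)=a_0\).
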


\begin{proof}
It is easy to see that \eqref{ch2:d4:e2} is valid for all \(x_1\), \(x_2\), \(x_3\), \(x_4 \in X\) if and, whenever \(\Phi^{-1}(a_0)\) is reflexive, only if
\begin{equation}\label{p2.7:e2}
\Phi^{-1}(a_0) \circ \Phi^{-1}(a) \subseteq \Phi^{-1}(a) \text{ and } \Phi^{-1}(a) \circ \Phi^{-1}(a_0) \subseteq \Phi^{-1}(a)
\end{equation}
is valid for every \(a \in \Phi(X^{2})\).

\(\ref{p2.7:s1} \Rightarrow \ref{p2.7:s2}\). Let \(\Phi\) be \(a_0\)-coherent. Then \(\Phi^{-1}(a_0)\) is an equivalence relation and \eqref{ch2:d4:e2} is valid for all \(x_1\), \(x_2\), \(x_3\), \(x_4 \in X\). Hence, \eqref{p2.7:e2} is valid for every \(a \in \Phi(X^{2})\). As \(\Phi^{-1}(a_0)\) is reflexive, we also have
\begin{equation}\label{p2.7:e3}
\Phi^{-1}(a_0) \circ \Phi^{-1}(a) \supseteq \Phi^{-1}(a) \text{ and } \Phi^{-1}(a) \circ \Phi^{-1}(a_0) \supseteq \Phi^{-1}(a)
\end{equation}
for every \(a \in \Phi(X^{2})\). From \eqref{p2.7:e2} and \eqref{p2.7:e3} it follows that
\begin{equation}\label{p2.7:e4}
\Phi^{-1}(a_0) \circ \Phi^{-1}(a) = \Phi^{-1}(a) \circ \Phi^{-1}(a_0) = \Phi^{-1}(a)
\end{equation}
for every \(a \in \Phi(X^{2})\). Thus, \(\mathcal{B}_X(P_{\Phi^{-1}})\) is a monoid with the identity element \(\Phi^{-1}(a_0)\). 

\(\ref{p2.7:s2} \Rightarrow \ref{p2.7:s1}\). Let \(\Phi^{-1}(a_0)\) be the identity element of \(\mathcal{B}_X(P_{\Phi^{-1}})\). We claim that the binary relation \(\Phi^{-1}(a_0)\) is reflexive. Suppose there is \(x_1 \in X\) for which
\begin{equation}\label{p2.7:e8}
\<x_1, x_1> \notin \Phi^{-1}(a_0)
\end{equation}
holds. Then we can find \(a_1 \in \Phi(X^{2})\) such that
\begin{equation}\label{p2.7:e9}
\<x_1, x_1> \in \Phi^{-1}(a_1).
\end{equation}
The equality \(\Phi^{-1}(a_1) \circ \Phi^{-1}(a_0) = \Phi^{-1}(a_1)\) implies that there exists \(x_2 \in X\) such that
\begin{equation}\label{p2.7:e10}
\<x_1, x_2> \in \Phi^{-1}(a_1) \text{ and } \<x_2, x_1> \in \Phi^{-1}(a_0).
\end{equation}
Now from the definition of composition of binary relations, \eqref{p2.7:e9}, \eqref{p2.7:e10} and the equality \(\Phi^{-1}(a_0) \circ \Phi^{-1}(a_1) = \Phi^{-1}(a_1)\) we obtain
\[
\<x_2, x_1> \in \Phi^{-1}(a_1).
\]
Consequently, we have the contradiction
\[
\<x_2, x_1> \in \Phi^{-1}(a_1) \cap \Phi^{-1}(a_0) = \varnothing.
\]
Hence, \(\Phi^{-1}(a_0)\) is reflexive.

Equalities \eqref{p2.7:e4} hold for every \(a \in \Phi(X^{2})\) as \(\Phi^{-1}(a_0)\) is the identity element of \(\mathcal{B}_X(P_{\Phi^{-1}})\). This implies \eqref{p2.7:e2} for every \(a \in \Phi(X^{2})\) and, consequently, \eqref{ch2:d4:e2} is valid for all \(x_1\), \(x_2\), \(x_3\), \(x_4 \in X\). For \(a = a_0\), from \eqref{p2.7:e4} it follows
\[
\Phi^{-1}(a_0) \circ \Phi^{-1}(a_0) \subseteq \Phi^{-1}(a_0),
\]
i.e., \(\Phi^{-1}(a_0)\) is transitive. To complete the proof it suffices to show that \(\Phi^{-1}(a_0)\) is symmetric. Suppose contrary that 
\begin{equation}\label{p2.7:e5}
\<x_1, x_2> \in \Phi^{-1}(a_0) \text{ but } \<x_2, x_1> \notin \Phi^{-1}(a_0)
\end{equation}
for some \(x_1\), \(x_2 \in X\). Then there is \(a_2 \in \Phi(X^{2})\) such that \(a_2 \neq a_0\) and 
\begin{equation}\label{p2.7:e6}
\<x_2, x_1> \in \Phi^{-1}(a_2).
\end{equation}
The definition of the composition \(\circ\) of binary relations and \eqref{p2.7:e5}, \eqref{p2.7:e6} imply
\begin{equation*}
\<x_1, x_1> \in \Phi^{-1}(a_0) \circ \Phi^{-1}(a_2) \subseteq \Phi^{-1}(a_2).
\end{equation*}
It follows from the reflexivity of \(\Phi^{-1}(a_0)\) that \(\<x_1, x_1> \in \Phi^{-1}(a_0)\). Consequently, we have the contradiction
\[
\<x_1, x_1> \in \Phi^{-1}(a_0) \cap \Phi^{-1}(a_2) = \varnothing. \qedhere
\]
\end{proof}

The uniqueness of the identity elements of semigroups and Theorem~\ref{p2.7} imply the following.

\begin{corollary}\label{c2.11}
Let \(X\) be a nonempty set and let \(\Phi\) be a mapping with the domain \(X^{2}\). If \(\Phi\) is \(a_0\)-coherent and \(a_1\)-coherent for \(a_0\), \(a_1 \in \Phi(X^{2})\), then \(a_0 = a_1\).
\end{corollary}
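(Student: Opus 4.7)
The plan is to apply Theorem~\ref{p2.7} twice and then exploit the fact that the fibers of $\Phi$ are pairwise disjoint. More precisely, I would first note that the $a_0$-coherence hypothesis, via Theorem~\ref{p2.7}, yields that $\mathcal{B}_X(P_{\Phi^{-1}})$ is a monoid whose identity element is $\Phi^{-1}(a_0)$. The same theorem applied to the $a_1$-coherence hypothesis (for the same semigroup $\mathcal{B}_X(P_{\Phi^{-1}})$, which depends only on $\Phi$) forces $\Phi^{-1}(a_1)$ to be an identity element of this monoid as well.

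Since the identity element of a monoid is unique (the standard one-line argument: if $e_0, e_1$ are both identities, then $e_0 = e_0 * e_1 = e_1$), I would conclude
\[
\Phi^{-1}(a_0) = \Phi^{-1}(a_1).
\]
Because $a_0, a_1 \in \Phi(X^{2})$, both sets $\Phi^{-1}(a_0)$ and $\Phi^{-1}(a_1)$ are nonempty. But the fibers of a mapping corresponding to distinct values are disjoint, i.e., if $a_0 \neq a_1$, then $\Phi^{-1}(a_0) \cap \Phi^{-1}(a_1) = \varnothing$, and a nonempty set cannot equal a set disjoint from it. Therefore $a_0 = a_1$.

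There is essentially no obstacle here: the whole content is packaged in Theorem~\ref{p2.7}, and the remaining step is the standard uniqueness of a monoid identity together with the trivial observation that distinct elements of $\Phi(X^{2})$ have disjoint (and nonempty) preimages. The only thing to be mildly careful about is to explicitly invoke that the semigroup $\mathcal{B}_X(P_{\Phi^{-1}})$ is the \emph{same} object in both applications of Theorem~\ref{p2.7}, so that ``uniqueness of identity'' applies to a single monoid rather than to two different candidates.
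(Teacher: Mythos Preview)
Your argument is correct and matches the paper's own justification essentially verbatim: the paper simply states that the uniqueness of identity elements of semigroups together with Theorem~\ref{p2.7} implies the corollary, and your proof spells out precisely this reasoning (including the final step that equal fibers with nonempty preimages force $a_0=a_1$).
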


\begin{remark}\label{r2.12}
Corollary~\ref{c2.11} can be derived directly from Definition~\ref{ch2:d4}.
\end{remark}

\begin{example}\label{ex2.8}
Let \((G, *)\) and \((H, \circ)\) be groups, let \(e\) be the identity element of \((H, \circ)\) and let \(F \colon G \to H\) be a homomorphism of \((G, *)\) and \((H, \circ)\). Then the mapping
\[
\Phi \colon G^{2} \to H, \quad \<x, y> \mapsto F(x^{-1}*y)
\]
is \(e\)-coherent. Indeed, since \(F\) is a homomorphism, the set \(F^{-1}(e)\) is a normal subgroup of \(G\). Then
\[
\Phi^{-1}(e) = \{\<x, y> \in G^{2} \colon x^{-1}*y \in F^{-1}(e)\}
\]
is an equivalence relation. For every \(h \in H\), the set \(F^{-1}(h)\) is a coset of the subgroup \(F^{-1}(e)\) in the group \(G\). Now the validity of implication~\eqref{ch2:d4:e2} can be derived from the fact that the set of all cosets \(F^{-1}(h)\), \(h \in F(G)\), forms a factor group of \(G\) by the subgroup \(F^{-1}(e)\) with the standard multiplication
\[
(xF^{-1}(e)) (yF^{-1}(e)) = ((x*y)F^{-1}(e))
\]
(see, for example, \cite[Theorem~4.5]{Lang2005}).
\end{example}

\begin{example}\label{ex2.13}
Let \(X = \{0, 1\}\), and \(Y = \{a_0, a_1, a_2\}\) with \(|Y| = 3\), and \(\Phi \colon X^{2} \to Y\) be a function defined so that
\[
\Phi(x, y) = \begin{cases}
a_0, & \text{if } \<x, y> = \<0, 0>\\
a_1, & \text{if } \<x, y> = \<1, 1>\\
a_2, & \text{otherwise}.
\end{cases}
\]
Write \(P_{\Phi^{-1}} = \{\Phi^{-1}(a_0), \Phi^{-1}(a_1), \Phi^{-1}(a_2)\}\). It is easy to prove that
\[
\Phi^{-1}(a_2) \circ \Phi^{-1}(a_2) = \Delta_X \notin P_{\Phi^{-1}}.
\]
Since \(\Delta_X\) is the identity element of \(\mathcal{B}_X(P_{\Phi^{-1}})\), Theorem~\ref{p2.7} implies that \(\mathcal{B}_X(P_{\Phi^{-1}})\) is a monoid but there is no \(a \in \Phi(X^{2})\) such that \(\Phi\) is \(a\)-coherent.
\end{example}

\begin{example}\label{ch2:ex9}
Let \(X = \{0, 1, 2\}\) and \(Y = \{a_0, a_1, a_2\}\) be three-point sets and let \(\Phi \colon X^2 \to Y\) be a mapping such that 
\[
\Phi^{-1}(a_0)  = \Delta_X \cup \{\<0, 1>, \<1, 0>\}, \quad \Phi^{-1}(a_1) = \{\<2, 0>\}
\]
and
\[
\Phi^{-1}(a_2) = \{\<0, 2>, \<1,2>, \<2,1>\}.
\]
Then \(\coherent{a_0}\) is an equivalence relation on \(X\) but \(\Phi\) is not \(a_0\)-coherent, because 
\[
2 \coherent{a_0} 2 \quad \text{and} \quad 0 \coherent{a_0} 1 \quad \text{but} \quad \Phi(2, 0) \neq \Phi(2, 1).
\]
\end{example}

\begin{proposition}\label{ch2:p8}
Let \(\Phi \colon X^2 \to Y\) be a surjection and let \(a_0 \in Y\). If the inequality
\begin{equation}\label{ch2:p8:e1}
|Y| \leqslant 2
\end{equation}
holds, then the following statements are equivalent:
\begin{enumerate}
\item \label{ch2:p8:s1} \(\coherent{a_0}\) is an equivalence relation on \(X\).
\item \label{ch2:p8:s2} \(\Phi\) is \(a_0\)-coherent.
\end{enumerate}
\end{proposition}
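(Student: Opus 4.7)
The implication $\ref{ch2:p8:s2} \Rightarrow \ref{ch2:p8:s1}$ is immediate from Definition~\ref{ch2:d4}, which by definition requires $\coherent{a_0}$ to be an equivalence relation. The real content is the converse. My plan is to assume $\coherent{a_0}$ is an equivalence relation and verify the implication~\eqref{ch2:d4:e2}, splitting into the cases $|Y| = 1$ and $|Y| = 2$.

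If $|Y| = 1$, then since $\Phi$ is a surjection onto $Y$, the mapping $\Phi$ is constant with value $a_0$, so $\Phi(x_1, x_3) = a_0 = \Phi(x_2, x_4)$ for any choice of $x_1, x_2, x_3, x_4 \in X$, and \eqref{ch2:d4:e2} holds trivially.

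The interesting case is $|Y| = 2$, where we write $Y = \{a_0, a_1\}$ with $a_0 \neq a_1$. Take $x_1, x_2, x_3, x_4 \in X$ with $x_1 \coherent{a_0} x_2$ and $x_3 \coherent{a_0} x_4$, and argue by contradiction: suppose $\Phi(x_1, x_3) \neq \Phi(x_2, x_4)$. Since $|Y| = 2$, exactly one of these two values equals $a_0$. After swapping the roles of the pairs $\<x_1, x_2>$ and $\<x_3, x_4>$ if necessary, we may assume $\Phi(x_1, x_3) = a_0$ and $\Phi(x_2, x_4) = a_1$, i.e.\ $x_1 \coherent{a_0} x_3$. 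Now using symmetry and transitivity of the equivalence relation $\coherent{a_0}$, the chain
\[
x_2 \coherent{a_0} x_1, \quad x_1 \coherent{a_0} x_3, \quad x_3 \coherent{a_0} x_4
\]
yields $x_2 \coherent{a_0} x_4$, which contradicts $\Phi(x_2, x_4) = a_1 \neq a_0$.

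There is no serious obstacle here; the only subtlety is handling the asymmetry between the pairs $\<x_1, x_3>$ and $\<x_2, x_4>$ in the contradiction step, which is resolved cleanly by the WLOG reduction. It is also worth noting why the bound $|Y| \leq 2$ is essential: as soon as $|Y| \geq 3$, Example~\ref{ch2:ex9} already exhibits a mapping for which $\coherent{a_0}$ is an equivalence relation but $\Phi$ fails to be $a_0$-coherent, showing that the proposition is sharp.
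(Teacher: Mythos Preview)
Your argument is correct. The paper itself omits the proof (``The proof is straightforward and we omit it here''), so there is nothing to compare against; your write-up supplies exactly the kind of direct verification the authors had in mind. One small remark: the phrase ``swapping the roles of the pairs \(\langle x_1, x_2\rangle\) and \(\langle x_3, x_4\rangle\)'' is slightly ambiguous---what you actually need is to swap \(x_1 \leftrightarrow x_2\) and \(x_3 \leftrightarrow x_4\) within each equivalent pair (legitimate by symmetry of \(\coherent{a_0}\)), not to interchange the two pairs with each other. Either way the contradiction goes through, since the two cases \(\Phi(x_1,x_3)=a_0\) and \(\Phi(x_2,x_4)=a_0\) are handled by the same transitivity chain, so the imprecision is harmless.
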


The proof is straightforward and we omit it here.

\begin{remark}\label{ch2:r3}
Examples~\ref{ch2:ex10}, \ref{ch2:ex8}, \ref{ex2.13} and \ref{ch2:ex9} show that the conditions
\begin{itemize}
\item \({\coherent{a_0}}\) is an equivalence relation
\end{itemize}
and
\begin{itemize}
\item implication~\eqref{ch2:d4:e2} is valid for all \(x_1\), \(x_2\), \(x_3\), \(x_4 \in X\)
\end{itemize}
are logically independent and the constant \(2\) in inequality~\eqref{ch2:p8:e1} cannot be replaced by any integer which is strictly greater than \(2\).
\end{remark}

\section{Combinatorial similarity for general pseudometrics, Ptolemaic pseudometrics, and discrete pseudometrics}

The following result gives us a characterization of mappings which are combinatorially similar to pseudometrics.

\begin{theorem}\label{ch2:p7}
Let \(X\) be a nonempty set. The following conditions are equivalent for every mapping \(\Phi\) with the domain \(X^{2}\):
\begin{enumerate}
\item\label{ch2:p7:s1} \(\Phi\) is combinatorially similar to a pseudometric.
\item\label{ch2:p7:s2} \(\Phi\) is symmetric, there is \(a_0 \in \Phi(X^{2})\) such that \(\Phi\) is \(a_0\)-coherent, and the inequality \(|\Phi(X^{2})| \leqslant \mathfrak{c}\) holds, where \(\mathfrak{c}\) is the cardinality of the continuum.
\end{enumerate}
\end{theorem}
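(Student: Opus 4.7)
The plan is to prove the two implications separately: the forward direction (i)$\Rightarrow$(ii) is essentially a formal consequence of Lemma~\ref{l2.7}, Example~\ref{ch2:ex5}, and the evident symmetry of the relation in Definition~\ref{d2.17}; the reverse direction (ii)$\Rightarrow$(i) is proved by an explicit construction of a pseudometric on \(Y := X\) itself, built from a carefully chosen injection \(\Phi(X^{2}) \to \mathbb{R}\) whose existence is guaranteed by the hypothesis \(|\Phi(X^{2})| \leq \mathfrak{c}\).

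For (i)$\Rightarrow$(ii), suppose \(\Phi\) is combinatorially similar to a pseudometric \(\rho \colon Y^{2} \to \mathbb{R}\) via bijections \(f \colon \Phi(X^{2}) \to \rho(Y^{2})\) and \(g \colon Y \to X\). First, symmetry of \(\Phi\) follows from the identity \(\Phi(x_{1}, x_{2}) = f^{-1}(\rho(g^{-1}(x_{1}), g^{-1}(x_{2})))\) combined with symmetry of \(\rho\). Second, interchanging the roles of the two mappings (the definition of combinatorial similarity is manifestly symmetric: \((f^{-1}, g^{-1})\) provides a commutative diagram in the opposite direction), Lemma~\ref{l2.7} together with the \(0\)-coherence of \(\rho\) (Example~\ref{ch2:ex5}) shows that \(\Phi\) is \(a_{0}\)-coherent with \(a_{0} := f^{-1}(0) \in \Phi(X^{2})\). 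Finally, \(|\Phi(X^{2})| = |\rho(Y^{2})| \leq |\mathbb{R}| = \mathfrak{c}\) because \(f\) is a bijection.

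For (ii)$\Rightarrow$(i), I would take \(Y := X\), \(g := \operatorname{id}_{X}\), and build \(\rho\) as a composition \(\rho = f \circ \Phi\) for a suitable injection \(f \colon \Phi(X^{2}) \to [0, 2]\). Since \(|\Phi(X^{2}) \setminus \{a_{0}\}| \leq \mathfrak{c} = |[1, 2]|\), I can set \(f(a_{0}) := 0\) and embed \(\Phi(X^{2}) \setminus \{a_{0}\}\) injectively into \([1, 2]\). Nonnegativity and symmetry of \(\rho\) are immediate, while \(\rho(x, x) = 0\) follows from the reflexivity of \(\Phi^{-1}(a_{0})\), which is part of \(a_{0}\)-coherence. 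Once \(\rho\) is verified to be a pseudometric, the pair \((f, \operatorname{id}_{X})\) is the required witness.

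The hardest step is the triangle inequality for \(\rho\). If both \(\rho(x, z)\) and \(\rho(z, y)\) are positive, they lie in \([1, 2]\), so \(\rho(x, z) + \rho(z, y) \geq 2 \geq \rho(x, y)\) trivially. The subtle case is when one of them, say \(\rho(x, z)\), vanishes: then \(x \coherent{a_{0}} z\), and this is exactly where the implication~\eqref{ch2:d4:e2} is invoked — it gives \(\Phi(x, y) = \Phi(z, y)\), hence \(\rho(x, y) = \rho(z, y)\) and the inequality holds with equality. Thus the embedding into \([1, 2]\) disposes of the generic triples, while the structural content of \(a_{0}\)-coherence takes care of those triples that touch the zero-fiber; this is where the three ingredients of hypothesis (ii) — symmetry, \(a_{0}\)-coherence, and the cardinality bound — jointly come into play.
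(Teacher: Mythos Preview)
Your proof is correct and follows essentially the same approach as the paper: for \((ii)\Rightarrow(i)\) the paper likewise takes \(g=\operatorname{id}_X\) and defines the pseudometric as \(f\circ\Phi\) with \(f(a_0)=0\) and \(f(\Phi(X^2)\setminus\{a_0\})\) embedded in a bounded interval away from zero, handling the triangle inequality exactly as you do. The only cosmetic difference is that the paper maps into \([1,2^{1/2}]\) rather than \([1,2]\)---a choice made solely so that the resulting pseudometric is automatically Ptolemaic, which is exploited later in Corollary~\ref{c2.20}; the paper itself remarks that \([1,2]\) would suffice for the present theorem.
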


\begin{proof}
\(\ref{ch2:p7:s1} \Rightarrow \ref{ch2:p7:s2}\). Let \(\ref{ch2:p7:s1}\) hold. It suffices to show that \(\Phi\) is \(a_0\)-coherent for some \(a_0 \in \Phi(X^{2})\). This statement follows from Lemma~\ref{l2.7} and Example~\ref{ch2:ex5}.

\(\ref{ch2:p7:s2} \Rightarrow \ref{ch2:p7:s1}\). Let \(\Phi\) be symmetric and \(a_0\)-coherent, \(a_0 \in \Phi(X^{2})\) and let \(|\Phi(X^2)| \leqslant \mathfrak{c}\). Let us consider a subset \(D\) of the set \(\{0\} \cup [1, 2^{1/2}]\) such that \(0 \in D\) and \(|D| = |\Phi(X^{2})|\), where 
\[
[1, 2^{1/2}] = \{x \in \RR \colon 1 \leqslant x \leqslant 2^{1/2}\}.
\]
(We use \(2^{1/2}\) in place of \(2\), which would suffice here, for an application in the proof of Corollary~\ref{c2.20}.)

Let \(f \colon \Phi(X^{2}) \to D\) be a bijective mapping and let 
\begin{equation}\label{ch2:p7:e1}
f(a_0) = 0
\end{equation}
hold. Write \(d\) for the mapping such that
\begin{equation}\label{ch2:p7:e2}
d(x, y) = f(\Phi(x,y)), \quad x, y \in X.
\end{equation}
It is clear that \(d\) and \(\Phi\) are combinatorially similar. We claim that \(d\) is a pseudometric on \(X\). 

The function \(d\) is symmetric and nonnegative because \(\Phi\) is symmetric and \(f\) is nonnegative. Since \(\Phi\) is \(a_0\)-coherent, \(\Delta_X \subseteq \coherent{a_0}\) and \(f(a_0) = 0\), we have \(d(x, x) = 0\) for every \(x\in X\). Consequently \(d\) is a pseudometric if the triangle inequality
\begin{equation}\label{ch2:p7:e4}
d(x, y) \leqslant d(x, z) + d(z, y)
\end{equation}
holds for all \(x\), \(y\), \(z \in X\). If \(d(x, z)\) and \(d(z, y)\) are strictly positive, then, from the definitions of \(D\) and \(f\), we obtain
\[
1 \leqslant d(x, z) \text{ and } 1 \leqslant d(z, y) \text{ and } d(x, y) \leqslant 2^{1/2},
\]
which implies~\eqref{ch2:p7:e4}. Suppose \(d(x, z) = 0\) holds. Then, by definition of \(d\), we have \(x \coherent{a_0} z\). Since \(\Phi\) is \(a_0\)-coherent, we also have the equality \(\Phi(x, y) = \Phi(z, y)\). The last equality is equivalent to
\begin{equation}\label{ch2:p7:e5}
d(x, y) = d(z, y),
\end{equation}
because \(f\) is injective. Equality~\eqref{ch2:p7:e5} implies inequality~\eqref{ch2:p7:e4}. Analogously, from \(d(z, y) = 0\) we obtain \(d(x, y) = d(x, z)\). Inequality~\eqref{ch2:p7:e4} follows for all \(x\), \(y\), \(z \in X\). Thus \(d\) is a pseudometric on \(X\).

We also note that the equality \(f(a_0) = 0\) and \eqref{ch2:p7:e2} imply the equality
\[
{\coherent{a_0}} = {\coherent{0}}. \qedhere
\]
\end{proof}

Following Schoenberg~\cite{Schoenberg1940, Schoenberg1952} we call a pseudometric \(d \colon X^{2} \to \RR\) \emph{Ptolemaic} if the Ptolemy inequality 
\begin{equation}\label{e2.15}
d(x, z) d(y, t) + d(x, t) d(y, z) \geqslant d(x, y) d(z,t)
\end{equation}
holds for all \(x\), \(y\), \(z\), \(t \in X\).

\begin{corollary}\label{c2.20}
If a mapping \(\Phi\) is combinatorially similar to a pseudometric, then there is a Ptolemaic pseudometric \(d\) such that \(\Phi\) is combinatorially similar to \(d\).
\end{corollary}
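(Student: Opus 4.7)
The plan is to reuse the explicit construction from the proof of Theorem~\ref{ch2:p7} and observe that the pseudometric it produces is automatically Ptolemaic. Precisely, since $\Phi$ is combinatorially similar to some pseudometric, Theorem~\ref{ch2:p7} (specifically, the implication \ref{ch2:p7:s1}$\Rightarrow$\ref{ch2:p7:s2} together with the construction in the reverse direction) shows that $\Phi$ is combinatorially similar to a pseudometric $d \colon X^2 \to \RR$ whose range is contained in $\{0\} \cup [1, 2^{1/2}]$, with $d(x,y) = 0$ iff $x \coherent{0} y$. So the task reduces to verifying that \emph{any} pseudometric with values in $\{0\} \cup [1, 2^{1/2}]$ satisfies the Ptolemy inequality~\eqref{e2.15}.

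To check this, I would distinguish cases according to which of the six distances appearing in \eqref{e2.15} vanish. If $d(x,y) = 0$ or $d(z,t) = 0$, the right-hand side of \eqref{e2.15} is zero and the inequality is trivial; so I may assume $d(x,y) > 0$ and $d(z,t) > 0$, hence both lie in $[1, 2^{1/2}]$. If one of the ``cross'' distances vanishes, say $d(x,z) = 0$, then $x \coherent{0} z$, so by the $0$-coherence of $d$ we get $d(x,t) = d(z,t)$ and $d(y,x) = d(y,z)$; substituting reduces \eqref{e2.15} to the identity $d(z,t)\, d(y,z) = d(y,z)\, d(z,t)$. The other three analogous coincidences reduce to equalities in exactly the same way.

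In the remaining case all of $d(x,z), d(y,t), d(x,t), d(y,z)$ are positive, hence each is at least $1$, while $d(x,y)$ and $d(z,t)$ are at most $2^{1/2}$. Consequently
\[
d(x,z)\, d(y,t) + d(x,t)\, d(y,z) \geqslant 1 + 1 = 2 = 2^{1/2} \cdot 2^{1/2} \geqslant d(x,y)\, d(z,t),
\]
which is precisely Ptolemy's inequality. This is why the proof of Theorem~\ref{ch2:p7} used the range $\{0\} \cup [1, 2^{1/2}]$ rather than simply $\{0, 1\}$: the factor $2^{1/2}$ provides just enough slack so that the product bound $2^{1/2} \cdot 2^{1/2} = 2$ coincides with the minimal value $1 + 1$ of the left-hand side.

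There is no serious obstacle; the argument is essentially a direct verification. The only point requiring a little care is the bookkeeping in the degenerate cases where a ``cross'' distance vanishes, and this is handled uniformly by the $0$-coherence property already established in the proof of Theorem~\ref{ch2:p7}.
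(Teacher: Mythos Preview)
Your proposal is correct and follows essentially the same approach as the paper: both invoke the construction from Theorem~\ref{ch2:p7} to obtain a pseudometric with range in $\{0\}\cup[1,2^{1/2}]$, then verify the Ptolemy inequality by the bound $1+1\geqslant 2^{1/2}\cdot 2^{1/2}$ in the nondegenerate case and by $0$-coherence when a cross distance vanishes. The only cosmetic difference is that you separately dispose of the trivial cases $d(x,y)=0$ or $d(z,t)=0$ up front, whereas the paper folds these into its ``all distances nonzero'' case.
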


\begin{proof}
Let \(\Phi\) be combinatorially similar to a pseudometric and let \(X\) be a nonempty set for which \(\dom \Phi = X^{2}\). Analyzing the proof of Theorem~\ref{ch2:p7} we can find a pseudometric \(d \colon X^{2} \to \RR\) such that \(\Phi\) and \(d\) are combinatorially similar and
\begin{equation}\label{c2.20:e1}
1 \leqslant d(x, y) \leqslant 2^{1/2},
\end{equation}
whenever \(d(x, y) \neq 0\). Consequently, if all distances between different points of the set \(\{x, y, z, t\}\) are nonzero, then we have
\[
d(x, y) d(z,t) \leqslant 2^{1/2} \cdot 2^{1/2} = 2
\]
and
\[
d(x, z) d(y, t) + d(x, t) d(y, z) \geqslant 1+1 = 2,
\]
which imply~\eqref{e2.15}. Moreover, since \(d\) is \(0\)-coherent and symmetric, from \(d(x,z) = 0\) it follows that
\[
d(x, t) d(y, z) = d(x, t)d(y, x) \quad \text{and} \quad d(x, y) d(z,t) = d(x, y) d(x,t).
\]
Hence, \eqref{e2.15} holds also for the case \(d(x,z) = 0\). Analogously, we can prove \eqref{e2.15} when \(d(y, t) = 0\) or \(d(x, t) = 0\) or \(d(y, z) = 0\). 

Thus, \(d\) is a Ptolemaic pseudometric.
\end{proof}

Let \(X\) be a set. A \emph{semimetric} on \(X\) is a nonnegative function \(d \colon X^{2} \to \RR\) such that \(d(x, y) = d(y, x)\) and \((d(x, y) = 0) \Leftrightarrow (x = y)\) for all \(x\), \(y \in X\) (see, for example, \cite[p.~7]{Blum(1953)}).

Example~\ref{ch2:ex7}, Example~\ref{ch2:ex10}, Theorem~\ref{ch2:p7} and Corollary~\ref{c2.20} imply the following.

\begin{corollary}\label{c3.3}
Let \(X\) be a nonempty set. The following conditions are equivalent for every mapping \(\Phi\) with the domain \(X^{2}\):
\begin{enumerate}
\item \label{c3.3:s1} \(\Phi\) is combinatorially similar to a Ptolemaic metric.
\item \label{c3.3:s2} \(\Phi\) is combinatorially similar to a metric.
\item \label{c3.3:s3} \(\Phi\) is combinatorially similar to a semimetric.
\item \label{c3.3:s4} \(\Phi\) is symmetric, \(|\Phi(X^2)| \leqslant \mathfrak{c}\) holds, and there is \(a_0 \in \Phi(X^{2})\) such that \(\Phi^{-1}(a_0) = \Delta_X\),  where \(\Delta_X\) is the diagonal on \(X\).
\end{enumerate}
\end{corollary}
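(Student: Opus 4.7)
The plan is to establish the cycle of implications (i) $\Rightarrow$ (ii) $\Rightarrow$ (iii) $\Rightarrow$ (iv) $\Rightarrow$ (i), leaning on the four results cited immediately before the statement. Since every Ptolemaic metric is a metric and every metric is a semimetric, the first two implications are automatic: the bijections $(f,g)$ witnessing similarity to the stronger object witness similarity to the weaker one as well.

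For (iii) $\Rightarrow$ (iv), I would fix a semimetric $\rho \colon Y^{2} \to \RR$ combinatorially similar to $\Phi$ via bijections $f \colon \Phi(X^{2}) \to \rho(Y^{2})$ and $g \colon Y \to X$ as in Definition~\ref{d2.17}. Symmetry of $\Phi$ is pulled back from symmetry of $\rho$ using injectivity of $f$ and bijectivity of $g$, and the bound $|\Phi(X^{2})| = |\rho(Y^{2})| \leqslant \mathfrak{c}$ is immediate. Setting $a_0 := f^{-1}(0)$, which lies in $\Phi(X^{2})$ since $\rho(y,y)=0$ for any $y \in Y$, the defining identity $\rho(x,y) = f(\Phi(g(x), g(y)))$ together with the semimetric property $\rho^{-1}(0) = \Delta_Y$ and the bijectivity of $g$ forces $\Phi^{-1}(a_0) = \Delta_X$.

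For (iv) $\Rightarrow$ (i), Example~\ref{ch2:ex10} upgrades the hypothesis $\Phi^{-1}(a_0) = \Delta_X$ to $a_0$-coherence of $\Phi$, so condition \ref{ch2:p7:s2} of Theorem~\ref{ch2:p7} holds and $\Phi$ is combinatorially similar to some pseudometric; Corollary~\ref{c2.20} then improves this to a Ptolemaic pseudometric $d$. The one step requiring genuine verification, and the main subtlety of the whole argument, is that this $d$ is actually a metric. I would handle it by inspecting the explicit construction used in the proofs of Theorem~\ref{ch2:p7} and Corollary~\ref{c2.20}: there $d(x,y) = f(\Phi(x,y))$ for an injective $f$ with $f(a_0) = 0$, so $d^{-1}(0) = \Phi^{-1}(a_0) = \Delta_X$, and Example~\ref{ch2:ex7} then promotes $d$ from a Ptolemaic pseudometric to a Ptolemaic metric, closing the cycle.
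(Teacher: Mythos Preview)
Your proposal is correct and is precisely the argument the paper has in mind: the paper gives no explicit proof but simply cites Example~\ref{ch2:ex7}, Example~\ref{ch2:ex10}, Theorem~\ref{ch2:p7}, and Corollary~\ref{c2.20}, and you have spelled out exactly how these four ingredients combine into the cycle $(i)\Rightarrow(ii)\Rightarrow(iii)\Rightarrow(iv)\Rightarrow(i)$. In particular, your observation that the explicit construction $d(x,y)=f(\Phi(x,y))$ with $f(a_0)=0$ forces $d^{-1}(0)=\Delta_X$, so that the Ptolemaic pseudometric is in fact a metric, is the key detail the paper leaves implicit.
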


Let \(\mathcal{A}_X\) be a subset of the set \(\mathcal{PM}_X (\mathcal{M}_X)\) of all pseudometrics (metrics) on a set \(X\). We say that \(\mathcal{A}_X\) is \emph{combinatorially universal} for \(\mathcal{PM}_X (\mathcal{M}_X)\) if for every \(d \in \mathcal{PM}_X(\mathcal{M}_X)\) there is \(\rho \in \mathcal{A}_X\) such that \(d\) is combinatorially similar to \(\rho\).

The following problem seems to be interesting.

\begin{problem}\label{pr2.22}
Find conditions under which a given subset of \(\mathcal{PM}_X(\mathcal{M}_X)\) is combinatorially universal for \(\mathcal{PM}_X (\mathcal{M}_X)\).
\end{problem}

We will not discuss this problem in details but note that, in accordance with Corollary~\ref{c2.20} (Corollary~\ref{c3.3}), the set of all Ptolemaic pseudometrics (metrics) \(d \colon X^{2} \to \RR\) is combinatorially universal for \(\mathcal{PM}_X\) \((\mathcal{M}_X)\).

A simple example of a set of pseudometrics which is not combinatorially universal is the class of all discrete pseudometrics.

We say that a pseudometric \(d \colon X^{2} \to \RR\) is \emph{discrete} if the inequality
\begin{equation}\label{e3.5}
|d(X^{2})| \leqslant 2
\end{equation}
holds. 

It is easy to see that a pseudometric \(d \colon X^{2} \to \RR\) is discrete if and only if there is \(k >0\) such that the implication
\begin{equation}\label{e3.5*}
(d(x, y) \neq 0) \Rightarrow (d(x, y) = k)
\end{equation}
is valid for all \(x\), \(y \in X\).

\begin{remark}\label{r3.2}
Implication~\eqref{e3.5*} is vacuously true for the zero pseudometrics. Hence, every zero pseudometric is discrete.
\end{remark}

\begin{proposition}\label{p3.3}
Let \(X\), \(Y\) be nonempty sets and let \(d\), \(\rho\) be discrete pseudometrics on \(X\) and \(Y\) respectively. Then, for every bijection \(g \colon Y \to X\), the following statements are equivalent:
\begin{enumerate}
\item \label{p3.3:s1} \(g\) is a similarity.
\item \label{p3.3:s2} \(g\) is a weak similarity.
\item \label{p3.3:s3} \(g\) is a combinatorial similarity.
\item \label{p3.3:s4} The logical equivalence 
\begin{equation}\label{p3.3:e1}
(\rho(x, y) = 0) \Leftrightarrow (d(g(x), g(y)) = 0)
\end{equation}
is valid for all \(x\), \(y \in Y\).
\end{enumerate}
\end{proposition}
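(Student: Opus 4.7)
\emph{Plan.} The implications $\ref{p3.3:s1}\Rightarrow\ref{p3.3:s2}\Rightarrow\ref{p3.3:s3}$ hold for arbitrary pseudometric spaces and are exactly the chain recorded in Figure~\ref{f1.1}; I will simply invoke them. The work therefore reduces to closing the loop by establishing $\ref{p3.3:s3}\Rightarrow\ref{p3.3:s4}\Rightarrow\ref{p3.3:s1}$, where the second implication is the only one that actually uses discreteness.

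For $\ref{p3.3:s3}\Rightarrow\ref{p3.3:s4}$, I will unpack Definition~\ref{d1.2}: the combinatorial similarity $g$ comes with a bijection $f\colon d(X^{2})\to\rho(Y^{2})$ satisfying~\eqref{d1.2:e1}. The key observation is that evaluating~\eqref{d1.2:e1} at $x=y$ gives
\[
0=\rho(y,y)=f\bigl(d(g(y),g(y))\bigr)=f(0),
\]
so $f(0)=0$. (Alternatively, Lemma~\ref{l2.7} says $\rho$ is $f(0)$-coherent, and since $\rho$ is also $0$-coherent by Example~\ref{ch2:ex5}, Corollary~\ref{c2.11} forces $f(0)=0$.) Injectivity of $f$ then converts~\eqref{d1.2:e1} into the equivalence~\eqref{p3.3:e1}.

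For $\ref{p3.3:s4}\Rightarrow\ref{p3.3:s1}$, I will use the characterization of discreteness supplied by~\eqref{e3.5*}, which says that a discrete pseudometric takes at most one positive value. Split the argument into two cases. If $d$ is the zero pseudometric, then~\eqref{p3.3:e1} together with the bijectivity of $g$ forces $\rho$ to be the zero pseudometric as well, and any positive real $r$ is then a valid ratio. Otherwise, let $k,k'>0$ be the unique positive values of $d$ and $\rho$, respectively (both exist: $\rho$ cannot be zero since, by~\eqref{p3.3:e1} and bijectivity of $g$, that would force $d$ to be zero). Setting $r:=k'/k$, a two-line case analysis on whether $\rho(x,y)$ vanishes—using~\eqref{p3.3:e1} to transfer vanishing to $d(g(x),g(y))$—yields $\rho(x,y)=r\,d(g(x),g(y))$ for all $x,y\in Y$, so $g$ is a similarity of ratio $r$.

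There is no real obstacle in this argument; it is essentially bookkeeping. The conceptual content is that discreteness collapses the scaling bijection $f$ of a combinatorial similarity to multiplication by a single positive scalar, and the only nontrivial input is that $f$ must send $0$ to $0$, which is immediate from the reflexivity axiom $\rho(y,y)=0$.
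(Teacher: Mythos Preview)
Your proof is correct and follows the same overall cycle $\ref{p3.3:s1}\Rightarrow\ref{p3.3:s2}\Rightarrow\ref{p3.3:s3}\Rightarrow\ref{p3.3:s4}\Rightarrow\ref{p3.3:s1}$ as the paper. The one noteworthy difference is in the step $\ref{p3.3:s3}\Rightarrow\ref{p3.3:s4}$: the paper obtains $f(0)=0$ by invoking Lemma~\ref{l2.7} and Corollary~\ref{c2.11} (your parenthetical alternative), whereas your primary argument---simply evaluating~\eqref{d1.2:e1} at $x=y$ and using $\rho(y,y)=0=d(g(y),g(y))$---is more elementary and avoids that machinery entirely. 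Your route is arguably cleaner here; the paper's route has the virtue of illustrating that Corollary~\ref{c2.11} does real work.

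One cosmetic slip: with the paper's convention a similarity $g\colon Y\to X$ of ratio $r$ satisfies $d(g(x),g(y))=r\,\rho(x,y)$, so your equation $\rho(x,y)=r\,d(g(x),g(y))$ with $r=k'/k$ actually makes $g$ a similarity of ratio $k/k'$, matching the paper's $r_d/r_\rho$. This does not affect the validity of the argument.
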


\begin{proof}
The validity of the implications \(\ref{p3.3:s1} \Rightarrow \ref{p3.3:s2}\) and \(\ref{p3.3:s2} \Rightarrow \ref{p3.3:s3}\) has already been noted in the first section of the paper (see Figure~\ref{f1.1}).

\(\ref{p3.3:s3} \Rightarrow \ref{p3.3:s4}\). Let \(\ref{p3.3:s3}\) hold. By Example~\ref{ch2:ex5}, the pseudometric spaces \((X, d)\) and \((Y, \rho)\) are \(0\)-coherent. By Definition~\ref{d1.2}, there exists a bijection \(f \colon d(X^{2}) \to \rho(Y^{2})\) such that~\eqref{d1.2:e1} holds for all \(x\), \(y \in Y\). From Lemma~\ref{l2.7} it follows that the mapping \(\rho\) is \(f(0)\)-coherent. Now using Corollary~\ref{c2.11} we obtain the equality \(f(0) = 0\). The last equality and \eqref{d1.2:e1} imply the validity of \eqref{p3.3:e1} for all \(x\), \(y \in Y\).

\(\ref{p3.3:s4} \Rightarrow \ref{p3.3:s1}\). Let \(\ref{p3.3:s4}\) hold. Then from \eqref{p3.3:e1} it follows that
\begin{equation}\label{p3.3:e2}
\rho^{-1}(0) = \{\<x, y> \in Y^{2} \colon d(g(x), g(y)) = 0\}.
\end{equation}
If \(|\rho(Y^{2})| = 1\) then the last equality implies \(\ref{p3.3:s1}\) . Let \(|\rho(Y^{2})| = 2\). From \eqref{p3.3:e2} it follows that \(|d(X^{2})| \geqslant 2\). Using the definition of discrete pseudometrics we obtain the equality \(|d(X^{2})| = 2\). Consequently, we have also 
\begin{equation}\label{p3.3:e3}
\rho^{-1}(\rho(Y^{2})\setminus \{0\}) = \{\<x, y> \in Y^{2} \colon d(g(x), g(y)) \neq 0\}.
\end{equation}
Let \(r_d \in d(X^{2})\setminus \{0\}\) and \(r_{\rho} \in \rho(Y^{2})\setminus \{0\}\). Then from \eqref{p3.3:e1}, \eqref{p3.3:e2} and \eqref{p3.3:e3} it follows that \(g\) is a similarity of pseudometric spaces \((Y, \rho)\) and \((X, d)\) with the ratio \(r_d/r_{\rho}\).
\end{proof}

Theorem~\ref{ch2:p7} and Proposition~\ref{ch2:p8} imply that every equivalence relation coincides with \(\coherent{0}\) for a suitable discrete pseudometric.

\begin{corollary}\label{ch2:p2}
Let \(X\) be a nonempty set and let \(\equiv\) be an equivalence relation on \(X\). Then there is an up to a similarity unique discrete pseudometric \(d \colon X^{2} \to \RR\) such that \({\equiv} = \coherent{0}\).
\end{corollary}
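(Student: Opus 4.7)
The plan is to split the claim into existence and uniqueness, and in both parts to reduce to results already available in the paper.

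For existence, following the remark preceding the corollary, I would pick a two-element set \(Y = \{a_0, a_1\}\) and define \(\Phi\colon X^{2} \to Y\) by setting \(\Phi(x,y) = a_0\) when \(x \equiv y\) and \(\Phi(x,y) = a_1\) otherwise. Then \(\Phi^{-1}(a_0) = {\equiv}\) is an equivalence relation by hypothesis, \(\Phi\) is symmetric (because \(\equiv\) is symmetric), and \(|\Phi(X^{2})| \leqslant 2 \leqslant \mathfrak{c}\). Proposition~\ref{ch2:p8} upgrades the equivalence-relation property to the statement that \(\Phi\) is \(a_0\)-coherent, and Theorem~\ref{ch2:p7} then yields a pseudometric combinatorially similar to \(\Phi\). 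Reading the construction inside the proof of Theorem~\ref{ch2:p7} (in particular the concluding line \({\coherent{a_0}} = {\coherent{0}}\)), this pseudometric \(d\) is defined on the same set \(X\) and satisfies \(d^{-1}(0) = \Phi^{-1}(a_0) = {\equiv}\) together with \(|d(X^{2})| = |\Phi(X^{2})| \leqslant 2\); hence \(d\) is discrete. An even more transparent construction is of course to put \(d(x,y) := 0\) if \(x \equiv y\) and \(d(x,y) := 1\) otherwise and verify the triangle inequality directly: the only nontrivial case reduces at once to the transitivity of \(\equiv\).

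For the uniqueness clause, the key observation is that Proposition~\ref{p3.3} contains what we need as its \(g = \mathrm{id}_X\) specialization. Suppose \(d_1\) and \(d_2\) are two discrete pseudometrics on \(X\) with \(d_1^{-1}(0) = {\equiv} = d_2^{-1}(0)\). Taking \(Y := X\), \(\rho := d_2\), \(d := d_1\), and \(g := \mathrm{id}_X\) in Proposition~\ref{p3.3}, its condition~\ref{p3.3:s4}, the equivalence \((d_2(x,y) = 0) \Leftrightarrow (d_1(x,y) = 0)\), is precisely the standing hypothesis, so its condition~\ref{p3.3:s1} yields that \(\mathrm{id}_X\) is a similarity between \((X, d_2)\) and \((X, d_1)\). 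Both cases \(|d_2(X^{2})| = 1\) (equivalently \({\equiv} = X^{2}\), so both pseudometrics are zero) and \(|d_2(X^{2})| = 2\) are handled uniformly inside the proof of Proposition~\ref{p3.3}.

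I do not anticipate a substantive obstacle: once the two-valued \(\Phi\) is spotted, existence is essentially a plug-in to Theorem~\ref{ch2:p7} via Proposition~\ref{ch2:p8}, and uniqueness is a plug-in to Proposition~\ref{p3.3}. The only thing worth emphasizing explicitly in the writeup is that "up to a similarity" is exactly the content of Proposition~\ref{p3.3} specialized to \(g = \mathrm{id}_X\), so no separate argument for the degenerate case \({\equiv} = X^{2}\) is needed.
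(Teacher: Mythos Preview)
Your proposal is correct and follows essentially the same route as the paper: both define the two-valued mapping \(\Phi\), invoke Proposition~\ref{ch2:p8} for \(a_0\)-coherence and Theorem~\ref{ch2:p7} for the pseudometric (the paper does the pull-back along \(g^{-1}\) explicitly where you cite the concluding line of that proof), and both reduce uniqueness to the \(g = \mathrm{id}_X\) case of Proposition~\ref{p3.3}, which the paper states without explicit citation. Your mention of the direct construction is exactly the content of Remark~\ref{r3.7}.
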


\begin{proof}
Let \(\equiv\) be an equivalence relation on \(X\). Define a mapping \(\Phi \colon X^{2} \to \mathbb{R}\) by \(\Phi(x, y) = 0\) if \(x \equiv y\) and \(\Phi(x, y) = 1\) otherwise. Then \(\Phi\) is symmetric. As \(\Phi^{-1}(0)\) is an equivalence relation and \(|\Phi(X^{2})| \leqslant 2\), \(\Phi\) is \(0\)-coherent by Proposition~\ref{ch2:p8}. Hence, as \(|\Phi(X^{2})| \leqslant 2\), Theorem~\ref{ch2:p7} implies that \(\Phi\) is combinatorially similar to a pseudometric \(\rho\) on \(X\). Choose bijections \(f \colon \Phi(X^{2}) \to \rho(X^{2})\) and \(g \colon X \to X\) such that \(\rho(x, y) = f(\Phi(g(x), g(y)))\) for all \(x\), \(y \in X\). Then \(\rho\) is discrete. Define a discrete pseudometric \(d\) on \(X\) by \(d(x, y) = \rho(g^{-1}(x), g^{-1}(y)) = f(\Phi(x, y))\) for all \(x\), \(y \in X\). Fix \(x_0 \in X\). Then \(0 = d(x_0, x_0) = f(\Phi(x_0, x_0)) = f(0)\). Hence for all \(x\), \(y \in X\) we have \(x \equiv y \Leftrightarrow \Phi(x, y) = 0 \Leftrightarrow d(x, y) = 0\), as wanted.

To complete the proof it suffices to note that if \(d_1 \colon X^2 \to \RR\) and \(d_2 \colon X^2 \to \RR\) are discrete pseudometrics such that
\[
\{\<x,y> \in X^{2} \colon d_1(x, y) = 0\} = \{\<x,y> \in X^{2} \colon d_2(x, y) = 0\},
\]
then \(d_1\) and \(d_2\) are similar.
\end{proof}

\begin{remark}\label{r3.7}
The existence can also be shown by a simple independent proof. For this, the reader can extract the following hint from the given proof: \(d(x, y) = 0\) if \(x \equiv y\) and \(d(x, y) = f(1) > 0\) otherwise. Then the reader may assume that \(f(1) = 1\) and establish directly that \(d\) is a desired pseudometric (even a pseudoultrametric).
\end{remark}

Using Theorem~\ref{ch2:p7} and Corollary~\ref{ch2:p2} we obtain a combinatorial characterization of discrete pseudometrics.

\begin{theorem}\label{t3.6}
Let \(X\) be a nonempty set. The following conditions are equivalent for every mapping \(\Phi\) with the domain \(X^{2}\):
\begin{enumerate}
\item [\((i)\)] \(\Phi\) is combinatorially similar to a discrete pseudometric.
\item [\((ii)\)] There is \(a_0 \in \Phi(X^{2})\) such that \(\Phi\) is \(a_0\)-coherent and the inequality \(|\Phi(X^{2})| \leqslant 2\) holds.
\item [\((iii)\)] There is \(a_0 \in \Phi(X^{2})\) such that \(\Phi^{-1}(a_0)\) is an equivalence relation on \(X\) and the inequality \(|\Phi(X^{2})| \leqslant 2\) holds.
\end{enumerate}
\end{theorem}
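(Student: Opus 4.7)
The strategy is to establish the cyclic chain $(i) \Rightarrow (ii) \Rightarrow (iii) \Rightarrow (i)$, reducing each step to machinery already in place. The implication $(ii) \Rightarrow (iii)$ is immediate from Definition~\ref{ch2:d4}, since $a_0$-coherence of $\Phi$ already includes the requirement that $\Phi^{-1}(a_0)$ be an equivalence relation.

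For $(i) \Rightarrow (ii)$, I would first observe that every pseudometric is $0$-coherent by Example~\ref{ch2:ex5}; in particular, the discrete pseudometric $d$ to which $\Phi$ is combinatorially similar is $0$-coherent. Combinatorial similarity of mappings is a symmetric relation (one simply inverts the bijections $f$ and $g$ of Definition~\ref{d2.17}), so Lemma~\ref{l2.7} transfers the coherence from $d$ to $\Phi$ and produces some $a_0 \in \Phi(X^{2})$ for which $\Phi$ is $a_0$-coherent. The cardinality bound $|\Phi(X^{2})| \leqslant 2$ is then automatic, since the value-set bijection preserves cardinality and the discreteness of $d$ means its value set has at most two elements.

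For the main implication $(iii) \Rightarrow (i)$, the plan is to feed the hypothesis into Proposition~\ref{ch2:p8} and then into Theorem~\ref{ch2:p7}. Since $|\Phi(X^{2})| \leqslant 2$ and $\Phi^{-1}(a_0)$ is an equivalence relation, Proposition~\ref{ch2:p8} at once gives the full $a_0$-coherence of $\Phi$. To invoke Theorem~\ref{ch2:p7} I must also verify that $\Phi$ is symmetric: the case $|\Phi(X^{2})| = 1$ is trivial, and in the case $\Phi(X^{2}) = \{a_0, a_1\}$ the symmetry of the equivalence relation $\Phi^{-1}(a_0)$ together with the identity $\Phi^{-1}(a_1) = X^{2} \setminus \Phi^{-1}(a_0)$ forces $\Phi^{-1}(a_1)$ to be symmetric as a binary relation as well, whence $\Phi(x, y) = \Phi(y, x)$ for all $x, y \in X$. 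Theorem~\ref{ch2:p7} then produces a pseudometric $d$ on $X$ combinatorially similar to $\Phi$, and the cardinality-preserving value-set bijection forces $|d(X^{2})| = |\Phi(X^{2})| \leqslant 2$, so $d$ is discrete by definition. I do not foresee a real obstacle here: every step reduces to an earlier result, with the small two-point symmetry verification being the only spot where any concrete combinatorial work is required.
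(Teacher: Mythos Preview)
Your proof is correct and uses essentially the same ingredients as the paper. The only difference is in the step $(iii)\Rightarrow(i)$: the paper invokes Corollary~\ref{ch2:p2} to produce a discrete pseudometric $d$ with $d^{-1}(0)=\Phi^{-1}(a_0)$ and then reads off the combinatorial similarity, whereas you bypass that corollary by applying Proposition~\ref{ch2:p8} and Theorem~\ref{ch2:p7} directly (making the symmetry check explicit along the way). Since Corollary~\ref{ch2:p2} is itself proved via Proposition~\ref{ch2:p8} and Theorem~\ref{ch2:p7}, the two routes unwind to the same argument; yours is marginally more self-contained.
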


\begin{proof}
Using the definition of discrete pseudometrics and Theorem~\ref{ch2:p7} we see that the logical equivalence \((i) \Leftrightarrow (ii)\) is valid. The implication \((ii) \Rightarrow (iii)\) is valid by Definition~\ref{ch2:d4}. 

Let \((iii)\) hold. Then \(\Phi^{-1}(a_0)\) is an equivalence relation on \(X\) for some \(a_0 \in \Phi(X^{2})\). From Corollary~\ref{ch2:p2} it follows that there is a discrete pseudometric \(d \colon X^{2} \to \RR\) such that \(\Phi^{-1}(a_0) = \coherent{0}\). This equality, the inequality \(|\Phi(X^{2})| \leqslant 2\) and the definition of discrete pseudometrics imply that \(\Phi\) and \(d\) are combinatorially similar. Thus, \((iii) \Rightarrow (i)\) also is valid.
\end{proof}

Suppose now that \(X\) is a finite, nonempty set and \(n = |X|\). If \(d \colon X^{2} \to \RR\) is a pseudometric, then we evidently have the equality
\begin{equation}\label{e3.12}
n = \sum_{\alpha \in X/ \coherent{0}} |\alpha|,
\end{equation}
where \(X/ \coherent{0}\) is the quotient set with respect to \(\coherent{0}\) (see Proposition~\ref{ch2:p1}). Thus, every pseudometric on \(X\) generates a representation of \(n\) as a sum of positive integers (\(=\) a \emph{partition of} \(n\)). Recall that a partition of a positive integer \(n\) is a finite sequence \(\{n_1, \ldots, n_k\}\) of positive integers such that \(n_1 \geqslant \ldots \geqslant n_k\) and
\[
n = \sum_{i=1}^{k} n_i.
\]

In what follows we denote by \(\mathcal{DP}_X\) the set of all discrete pseudometrics on a set \(X\) and write \(d_1 \approx d_2\) if \(d_1\), \(d_2 \in \mathcal{DP}_X\) and \(d_1\) is combinatorially similar to \(d_2\). By Example~\ref{ex2.4} the relation \(\approx\) is an equivalence relation on \(\mathcal{DP}_X\).

\begin{proposition}\label{p3.8}
Let \(X\) be a finite, nonempty set and \(n = |X|\). Let us denote by \(\mathcal{DP}_X / \approx\) the quotient set of \(\mathcal{DP}_X\) defined by \(\approx\). Then the cardinality \(|\mathcal{DP}_X / \approx|\) equals the number of distinct representations of \(n\) as a sum of positive integers.
\end{proposition}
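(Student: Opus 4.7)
The plan is to construct an explicit bijection between $\mathcal{DP}_X/{\approx}$ and the set of partitions of $n$ by sending the equivalence class of a discrete pseudometric $d$ to the partition of $n$ given by the sizes of the equivalence classes of $\coherent{0}^{d}$, sorted into nonincreasing order. This uses \eqref{e3.12} to guarantee that the multiset of sizes really is a partition of $n$.

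First, I would verify that this assignment
\[
\pi \colon \mathcal{DP}_X \to \{\text{partitions of } n\}, \quad d \mapsto \text{size-multiset of } X/{\coherent{0}^{d}},
\]
is constant on $\approx$-equivalence classes. Suppose $d_1 \approx d_2$ with combinatorial similarity $g \colon X \to X$ and scaling bijection $f \colon d_2(X^2) \to d_1(X^2)$. By Example~\ref{ch2:ex5}, both $d_1$ and $d_2$ are $0$-coherent, and by Lemma~\ref{l2.7} the mapping $d_1$ is $f(0)$-coherent. Corollary~\ref{c2.11} then forces $f(0) = 0$, so that $d_2(x,y) = 0 \Leftrightarrow d_1(g(x), g(y)) = 0$ for all $x$, $y \in X$. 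Equivalently, $g$ transports $\coherent{0}^{d_2}$ to $\coherent{0}^{d_1}$ and hence induces a bijection between the quotient sets that preserves the size of each equivalence class. Thus $\pi(d_1) = \pi(d_2)$, and $\pi$ descends to a map $\bar{\pi} \colon \mathcal{DP}_X/{\approx} \to \{\text{partitions of } n\}$.

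Next, I would prove that $\bar{\pi}$ is injective. If $\pi(d_1) = \pi(d_2)$, then the quotient sets $X/{\coherent{0}^{d_1}}$ and $X/{\coherent{0}^{d_2}}$ have matching multisets of block sizes, so there is a bijection $g \colon X \to X$ mapping the blocks of $\coherent{0}^{d_2}$ onto the blocks of $\coherent{0}^{d_1}$. This $g$ satisfies the condition \eqref{p3.3:e1} of Proposition~\ref{p3.3}, so by the implication $\ref{p3.3:s4} \Rightarrow \ref{p3.3:s1}$ of that proposition $g$ is a similarity, and in particular a combinatorial similarity, giving $d_1 \approx d_2$.

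Finally, for surjectivity, given an arbitrary partition $\{n_1, \ldots, n_k\}$ of $n$, partition $X$ into blocks $X_1, \ldots, X_k$ with $|X_i| = n_i$ and let $\equiv$ be the equivalence relation on $X$ whose classes are the $X_i$. By Corollary~\ref{ch2:p2} there exists a discrete pseudometric $d \colon X^2 \to \RR$ with $\coherent{0}^{d} = {\equiv}$, and by construction $\pi(d) = \{n_1, \ldots, n_k\}$. The main technical care point in the whole argument is invoking Corollary~\ref{c2.11} to force $f(0)=0$ in the first step; once that is in hand, everything else is a clean translation between partitions of $n$ and $\approx$-classes of discrete pseudometrics.
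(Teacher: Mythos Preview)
Your proof is correct and follows essentially the same approach as the paper: both set up the map from discrete pseudometrics to partitions of $n$ via the block sizes of $\coherent{0}$, and both show this map is well-defined and injective on $\approx$-classes by reducing to the equivalence \eqref{p3.3:e1}. Your derivation of $f(0)=0$ via Lemma~\ref{l2.7} and Corollary~\ref{c2.11} is exactly what the paper does inside the proof of Proposition~\ref{p3.3} (the implication $\ref{p3.3:s3}\Rightarrow\ref{p3.3:s4}$), so where the paper cites that proposition you have simply inlined its argument. One point in your favor: you explicitly verify surjectivity of $\bar\pi$ via Corollary~\ref{ch2:p2}, which the paper's proof leaves implicit.
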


\begin{proof}
Every \(d \in \mathcal{DP}_X\) generates a partition of \(n\) by formula~\eqref{e3.12}. It suffices to show that the partitions of \(n\) generated by arbitrary \(d_1\), \(d_2 \in \mathcal{DP}_X\) coincide if and only if \(d_1 \approx d_2\).

Let \(d_1\) and \(d_2\) belong to \(\mathcal{DP}_X\) and \(d_1 \approx d_2\). By Proposition~\ref{p3.3}, we have \(d_1 \approx d_2\) if and only if there is a bijection \(g \colon X \to X\) such that
\begin{equation}\label{p3.8:e2}
(d_1(x, y) = 0) \Leftrightarrow (d_2(g(x), g(y)) = 0)
\end{equation}
is valid for all \(x\), \(y \in X\). Write, for every \(x \in X\),
\begin{equation}\label{p3.8:e3}
[x]_1 := \{y \in X \colon d_{1}(x, y) = 0\},\ [x]_2 := \{y \in X \colon d_{2}(x, y) = 0\}.
\end{equation}
(Cf. \eqref{e1.1}.) Condition~\eqref{p3.8:e2} implies \(g([x]_1) = [g(x)]_2\) for every \(x \in X\). In particular, we have \(|[x]_1| = |[g(x)]_2|\) and \(|[x]_2| = |[g^{-1}(x)]_1|\) for every \(x \in X\). Consequently, \(d_1\) and \(d_2\) generate one and the same partition of \(n\). 

Conversely, let 
\[
\left\{n_1^{(s)}, \ldots, n_{k_s}^{(s)}\right\}, \quad s = 1, 2,
\]
be partitions of \(n\) such that 
\[
n_1^{(s)} = \left|[x_1^{(s)}]_s\right|,\ \ldots,\ n_{k_s}^{(s)} = \left|[x_{k_s}^{(s)}]_s\right|,
\]
where
\[
[x_j^{(s)}]_s \cap [x_i^{(s)}]_s = \varnothing
\]
for \(1 \leqslant i < j \leqslant k_s\) and \([x_j^{(1)}]_1\), \([x_j^{(2)}]_2\) are defined as in \eqref{p3.8:e3}. Suppose that 
\[
\left\{n_1^{(1)}, \ldots, n_{k_1}^{(1)}\right\} \text{ and } \left\{n_1^{(2)}, \ldots, n_{k_2}^{(2)}\right\}
\]
coincide as partitions of \(n\). Then \(k_1 = k_2\) and \(n_i^{(1)} = n_i^{(2)}\) holds for every \(i \in \{1, \ldots, k_1\}\). Consequently, there are bijections
\[
g_i \colon [x_i^{(1)}]_1 \to [x_i^{(2)}]_2, \quad i = 1, \ldots, k_1.
\]
Let us define \(g \colon X \to X\) such that the restriction of \(g\) on \([x_i^{(1)}]_1\) satisfies
\[
g|_{[x_i^{(1)}]_1} = g_i
\]
for every \(i \in \{1, \ldots, n_{k_1}\}\). Then \(g\) is a bijection satisfying~\eqref{p3.8:e2}.
\end{proof}

Using Proposition~\ref{p3.8} we can simply rewrite different results describing the behavior of partitions of positive integer numbers as some statements related to discrete pseudometrics on finite sets. For example, the famous Hardy--Ramanujan asymptotic formula
\[
p(n) \sim \frac{1}{4n\sqrt{3}} \exp \left(\pi \sqrt{\frac{2n}{3}}\right)
\]
for the number \(p(n)\) of the partitions of \(n\) (see \cite{And1976}, formula \((5.12)\)) gives.

\begin{corollary}\label{c3.9}
Let \(\{X_i\}_{i \in \mathbb{N}}\) be a sequence of finite sets such that
\[
\lim_{i \to \infty} |X_i| = \infty.
\]
Then the limit relation
\[
\lim_{i \to \infty} \frac{\left|X_i\right| \left|\mathcal{DP}_{X_i} / \approx\right|}{\exp \left(\pi \sqrt{\frac{2\left|X_i\right|}{3}}\right)} = \frac{1}{4\sqrt{3}}
\]
holds.
\end{corollary}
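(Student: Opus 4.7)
The plan is to reduce the statement directly to the Hardy--Ramanujan asymptotic via Proposition~\ref{p3.8}, so no genuine analysis is required beyond invoking the cited formula.

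First I would set $n_i := |X_i|$ and apply Proposition~\ref{p3.8} to obtain the identity $|\mathcal{DP}_{X_i}/{\approx}| = p(n_i)$, where $p(n)$ denotes the number of partitions of $n$. After this substitution, the quotient under the limit becomes
\[
\frac{n_i\, p(n_i)}{\exp\!\left(\pi \sqrt{2 n_i/3}\right)}.
\]

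Next I would rewrite the Hardy--Ramanujan asymptotic $p(n) \sim \frac{1}{4 n \sqrt{3}} \exp(\pi \sqrt{2 n/3})$ in the equivalent form
\[
\lim_{n \to \infty} \frac{n\, p(n)}{\exp\!\left(\pi \sqrt{2n/3}\right)} = \frac{1}{4 \sqrt{3}}.
\]
Since the hypothesis $\lim_{i \to \infty} |X_i| = \infty$ means $n_i \to \infty$, passing along the subsequence $n_i$ yields the claimed limit.

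No step here presents a real obstacle: the content of the corollary is entirely encapsulated in Proposition~\ref{p3.8} together with the Hardy--Ramanujan formula. The only point requiring care is the correct bookkeeping of the factor $n$ when translating the standard asymptotic (which has $n$ in the denominator) into the form displayed in the statement (which has $|X_i|$ in the numerator on the left-hand side); this is a trivial algebraic rearrangement.
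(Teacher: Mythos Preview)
Your argument is correct and matches the paper's approach exactly: the paper likewise derives the corollary immediately from Proposition~\ref{p3.8} together with the Hardy--Ramanujan asymptotic, without supplying any further details. There is nothing to add.
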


\section{Combinatorial similarity for strongly rigid pseudometrics}

Let \((X, d)\) be a metric space. The metric \(d\) is said to be \emph{strongly rigid} if, for all \(x\), \(y\), \(u\), \(v \in X\), the condition
\begin{equation*}
d(x, y) = d(u, v) \neq 0
\end{equation*}
implies
\begin{equation}\label{e3.6}
(x = u \text{ and } y = v) \text{ or } (x = v \text{ and } y = u).
\end{equation}
(See \cite{Janos1972, Martin1977} for characteristic topological properties of strongly rigid metric spaces.)

The concept of strongly rigid metric can be naturally generalized to the concept of \emph{strongly rigid pseudometric}.

\begin{definition}\label{d4.3}
Let \((X, d)\) be a pseudometric space. We say that the pseudometric \(d\) is strongly rigid if, for all \(x\), \(y\), \(u\), \(v \in X\), from
\begin{equation}\label{d4.3:e1}
d(x, y) = d(u, v) \neq 0
\end{equation}
it follows that
\begin{equation}\label{d4.3:e2}
(x \coherent{0} u \text{ and } y \coherent{0} v) \text{ or } (x \coherent{0} v \text{ and } y \coherent{0} u).
\end{equation}
\end{definition}

\begin{remark}\label{r4.2}
If \(\coherent{0} = \Delta_X\) holds, then \eqref{d4.3:e2} coincides with \eqref{e3.6}. Thus, a function \(d \colon X^{2} \to \RR\) is a strongly rigid metric if and only if \(d\) is a metric and a strongly rigid pseudometric.
\end{remark}

\begin{example}\label{ex4.3}
In the context of Proposition~\ref{ch2:p1} the statements ``the pseudometric \(d\) is strongly rigid'' and ``the metric identification \(\tilde{d}\) is strongly rigid'' are equivalent.
\end{example}

\begin{proposition}\label{p4.4}
Let \(X\) be a nonempty set. Then, for every mapping \(\Phi\) with the domain \(X^{2}\), the following statements are equivalent:
\begin{enumerate}
\item \label{p4.4:s1} There is a strongly rigid, Ptolemaic pseudometric \(d\) such that \(\Phi\) is combinatorially similar to \(d\).
\item \label{p4.4:s2} There is a strongly rigid pseudometric \(d\) such that \(\Phi\) is combinatorially similar to \(d\).
\item \label{p4.4:s3} \(\Phi\) is symmetric, the inequality \(|\Phi(X^{2})| \leqslant \mathfrak{c}\) holds, there is \(a_0 \in \Phi(X^{2})\) such that \(\Phi\) is \(a_0\)-coherent, and, for all \(x\), \(y\), \(u\), \(v \in X\), the condition
\[
\Phi(x, y) = \Phi(u, v) \neq a_0
\]
implies
\[
(x \coherent{a_0} u \text{ and } y \coherent{a_0} v) \text{ or } (x \coherent{a_0} v \text{ and } y \coherent{a_0} u).
\]
\end{enumerate}
\end{proposition}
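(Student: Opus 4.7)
The plan is to prove the cycle \(\ref{p4.4:s1} \Rightarrow \ref{p4.4:s2} \Rightarrow \ref{p4.4:s3} \Rightarrow \ref{p4.4:s1}\). The implication \(\ref{p4.4:s1} \Rightarrow \ref{p4.4:s2}\) is immediate because every Ptolemaic pseudometric is a pseudometric.

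For \(\ref{p4.4:s2} \Rightarrow \ref{p4.4:s3}\), suppose \(\Phi\) is combinatorially similar to a strongly rigid pseudometric \(d \colon Y^{2} \to \RR\) via bijections \(f \colon \Phi(X^{2}) \to d(Y^{2})\) and \(g \colon Y \to X\). Theorem~\ref{ch2:p7} delivers symmetry of \(\Phi\) and \(|\Phi(X^{2})| \leqslant \mathfrak{c}\); since \(d\) is \(0\)-coherent by Example~\ref{ch2:ex5}, applying Lemma~\ref{l2.7} to the reverse combinatorial similarity via \(f^{-1}\) and \(g^{-1}\) (cf.\ Example~\ref{ex2.4}) shows that \(\Phi\) is \(a_0\)-coherent with \(a_0 := f^{-1}(0)\). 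To transfer strong rigidity, commutativity of diagram~\eqref{eq1.3} yields \(d(g^{-1}(x), g^{-1}(y)) = f(\Phi(x,y))\) for all \(x, y \in X\), whence
\[
x \coherent{a_0} y \;\Longleftrightarrow\; g^{-1}(x) \coherent{0} g^{-1}(y).
\]
Therefore \(\Phi(x,y) = \Phi(u,v) \neq a_0\) translates into \(d(g^{-1}(x), g^{-1}(y)) = d(g^{-1}(u), g^{-1}(v)) \neq 0\); strong rigidity of \(d\) provides the disjunction in terms of \({\coherent{0}}\) on \(Y\), and the displayed equivalence pulls it back to \({\coherent{a_0}}\) on \(X\).

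For \(\ref{p4.4:s3} \Rightarrow \ref{p4.4:s1}\), I would reuse verbatim the construction from the proofs of Theorem~\ref{ch2:p7} and Corollary~\ref{c2.20}: pick a set \(D \subseteq \{0\} \cup [1, 2^{1/2}]\) with \(0 \in D\) and \(|D| = |\Phi(X^{2})|\), a bijection \(f \colon \Phi(X^{2}) \to D\) with \(f(a_0) = 0\), and define \(d(x,y) := f(\Phi(x,y))\). Those proofs already give that \(d\) is a Ptolemaic pseudometric combinatorially similar to \(\Phi\) and that \({\coherent{a_0}} = {\coherent{0}}\). The only new point is strong rigidity: if \(d(x,y) = d(u,v) \neq 0\), the injectivity of \(f\) together with \(f(a_0) = 0\) force \(\Phi(x,y) = \Phi(u,v) \neq a_0\), and the hypothesis in \ref{p4.4:s3} then yields precisely the disjunction in \({\coherent{a_0}} = {\coherent{0}}\) demanded by~\eqref{d4.3:e2}.

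The main conceptual point throughout is the correspondence \({\coherent{a_0}} \leftrightarrow {\coherent{0}}\) mediated by \(f(a_0) = 0\) and \(g\); strong rigidity is only a statement about coincidence of values, so once this correspondence is in place, the transfer through \(f\) and \(g\) is routine. I therefore expect no genuine technical obstacle beyond careful bookkeeping, the argument being essentially parallel to the proofs of Theorem~\ref{ch2:p7} and Corollary~\ref{c2.20} with one extra condition tracked at each step.
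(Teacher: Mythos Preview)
Your proposal is correct and follows essentially the same route as the paper: the same cycle \(\ref{p4.4:s1} \Rightarrow \ref{p4.4:s2} \Rightarrow \ref{p4.4:s3} \Rightarrow \ref{p4.4:s1}\), with \(\ref{p4.4:s2} \Rightarrow \ref{p4.4:s3}\) reduced to Theorem~\ref{ch2:p7}, Example~\ref{ch2:ex5}/Lemma~\ref{l2.7}, and Definition~\ref{d4.3}, and with \(\ref{p4.4:s3} \Rightarrow \ref{p4.4:s1}\) reusing the construction of Theorem~\ref{ch2:p7}/Corollary~\ref{c2.20} plus a one-line check of strong rigidity. The paper's write-up is much terser, but the content is the same; your more explicit tracking of \(a_0 = f^{-1}(0)\) and of the correspondence \({\coherent{a_0}} \leftrightarrow {\coherent{0}}\) is exactly what is implicitly being invoked there.
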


\begin{proof}
\(\ref{p4.4:s1} \Rightarrow \ref{p4.4:s2}\). This is trivial.

\(\ref{p4.4:s2} \Rightarrow \ref{p4.4:s3}\). Let \ref{p4.4:s2} hold. Then \ref{p4.4:s3} follows from Definition~\ref{d2.17}, Theorem~\ref{ch2:p7} and Definition~\ref{d4.3}.

\(\ref{p4.4:s3} \Rightarrow \ref{p4.4:s1}\). Suppose \ref{p4.4:s3} holds. As in Corollary~\ref{c2.20}, we see that the pseudometric \(d \colon X^{2} \to \RR\), for which \eqref{ch2:p7:e1} and \eqref{ch2:p7:e2} hold, is Ptolemaic and combinatorially similar to \(\Phi\). Now, from condition \ref{p4.4:s3} and equality \eqref{ch2:p7:e2} it follows that \(d\) is strongly rigid.
\end{proof}

Using Proposition~\ref{p4.4} and Remark~\ref{r4.2} we obtain the following.

\begin{corollary}\label{c4.4}
Let \(X\) be a nonempty set. Then, for every mapping \(\Phi\) with the domain \(X^{2}\), the following conditions are equivalent:
\begin{enumerate}
\item \label{c4.4:s1} There is a strongly rigid, Ptolemaic metric \(d\) such that \(\Phi\) is combinatorially similar to \(d\).
\item \label{c4.4:s2} There is a strongly rigid metric \(d\) such that \(\Phi\) is combinatorially similar to \(d\).
\item \label{c4.4:s3} \(\Phi\) is symmetric, the inequality \(|\Phi(X^{2})| \leqslant \mathfrak{c}\) holds, there is \(a_0 \in \Phi(X^{2})\) such that \(\Phi^{-1}(a_0) = \Delta_X\), and, for all \(x\), \(y\), \(u\), \(v \in X\), the condition
\[
\Phi(x, y) = \Phi(u, v) \neq a_0
\]
implies
\[
(x = u \text{ and } y = v) \text{ or } (x = v \text{ and } y = u).
\]
\end{enumerate}
\end{corollary}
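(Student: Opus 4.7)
The plan is to derive Corollary~\ref{c4.4} as a direct consequence of Proposition~\ref{p4.4} combined with Remark~\ref{r4.2}; the latter is precisely what allows us to upgrade ``strongly rigid pseudometric'' to ``strongly rigid metric'' via the single extra hypothesis $\Phi^{-1}(a_0) = \Delta_X$. The implication \ref{c4.4:s1}$\Rightarrow$\ref{c4.4:s2} is immediate, so the real content sits in the other two arrows.

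For \ref{c4.4:s2}$\Rightarrow$\ref{c4.4:s3}, I would start from a strongly rigid metric $d \colon Y^{2} \to \RR$ combinatorially similar to $\Phi$ via bijections $f \colon \Phi(X^{2}) \to d(Y^{2})$ and $g \colon Y \to X$. Since $d$ is in particular a strongly rigid pseudometric (Remark~\ref{r4.2}), Proposition~\ref{p4.4} already delivers the symmetry of $\Phi$, the cardinality bound $|\Phi(X^{2})| \leqslant \mathfrak{c}$, the existence of an $a_{0} \in \Phi(X^{2})$ for which $\Phi$ is $a_{0}$-coherent, and the ``two-point symmetry'' implication. The only remaining point is the sharpening $\Phi^{-1}(a_0) = \Delta_X$. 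To get this I would trace through the bijections: by Lemma~\ref{l2.7} the unique coherent value must be $a_{0} = f^{-1}(0)$, and because $d$ is a metric, $d^{-1}(0) = \Delta_Y$; commutativity of diagram~\eqref{eq1.3} then forces $\Phi^{-1}(a_0) = (g \otimes g)(\Delta_Y) = \Delta_X$, the last equality because $g \colon Y \to X$ is a bijection. Finally, since $\Phi^{-1}(a_0) = \Delta_X$ also implies $x \coherent{a_0} u$ iff $x = u$, the rigidity implication in \ref{p4.4:s3} collapses into the one stated in \ref{c4.4:s3}.

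For \ref{c4.4:s3}$\Rightarrow$\ref{c4.4:s1}, I would apply Proposition~\ref{p4.4} to obtain a strongly rigid Ptolemaic pseudometric $d$ combinatorially similar to $\Phi$. Inspecting the construction used there (which reuses the recipe of Theorem~\ref{ch2:p7}: $d(x,y) = f(\Phi(x,y))$ with $f$ a bijection satisfying $f(a_0) = 0$), the hypothesis $\Phi^{-1}(a_0) = \Delta_X$ transfers to $d^{-1}(0) = \Delta_X$. Thus $d$ is in fact a metric, and Remark~\ref{r4.2} promotes it to a strongly rigid (Ptolemaic) metric.

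The main obstacle, such as it is, is simply the bookkeeping around the bijection $f$: one has to verify that the distinguished value $a_0$ produced by the $a_0$-coherence in \ref{p4.4:s3} necessarily corresponds to $0$ under the bijection $f$ of the combinatorial similarity with $d$. This is exactly what the uniqueness part of Corollary~\ref{c2.11} (via Lemma~\ref{l2.7}) guarantees, so no new argument is needed beyond pointing to these earlier results.
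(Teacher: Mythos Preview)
Your proposal is correct and follows essentially the same approach as the paper, which simply states that the corollary follows from Proposition~\ref{p4.4} and Remark~\ref{r4.2}; you have merely supplied the bookkeeping details (the identification of $a_0$ with $f^{-1}(0)$ via Lemma~\ref{l2.7} and Corollary~\ref{c2.11}, and the use of Example~\ref{ch2:ex10} to recover $a_0$-coherence from $\Phi^{-1}(a_0)=\Delta_X$) that the paper leaves implicit.
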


Let \(X\) be a nonempty set and \(P = \{X_j \colon j \in J\}\) be a set of nonempty subsets of \(X\). Recall that \(P\) is a \emph{partition} of \(X\) if we have
\[
\bigcup_{j \in J} X_j = X \quad \text{and} \quad X_{j_1} \cap X_{j_2} = \varnothing
\]
for all distinct \(j_1\), \(j_2 \in J\). In what follows we will say that the sets \(X_j\), \(j \in J\), are the \emph{blocks} of \(P\).

\begin{example}\label{ex4.6}
Let \(Y\) be a nonempty set and let \(F\) be a mapping with \(\dom F = Y\). Then \(P_{F^{-1}} = \{F^{-1}(a) \colon a \in F(Y)\}\) is a partition of \(Y\) whose blocks are the fibers of \(F\).
\end{example}

If \(P = \{X_j \colon j \in J\}\) and \(Q = \{X_i \colon i \in I\}\) are partitions of a set \(X\), then we say that \(P\) and \(Q\) are equal, and write \(P = Q\), if and only if there is a bijective mapping \(f \colon J \to I\) such that \(X_j = X_{f(j)}\) for every \(j \in J\).

There exists a well-known, one-to-one correspondence between the equivalence relations and partitions.

\begin{proposition}\label{p4.7}
Let \(X\) be a nonempty set. If \(P = \{X_j \colon j \in J\}\) is a partition of \(X\) and \(R_P\) is a binary relation on \(X\) defined as
\begin{itemize}
\item[] \(\<x, y> \in R_P\) if and only if \(\exists j \in J\) such that  \(x \in X_j\) and \(y \in X_j\),
\end{itemize}
then \(R_P\) is an equivalence relation on \(X\) with the equivalence classes \(X_j\). Conversely, if \(R\) is an equivalence relation on \(X\), then the set \(P_R\) of all distinct equivalence classes \([a]_R\) is a partition of \(X\) with the blocks \([a]_R\).
\end{proposition}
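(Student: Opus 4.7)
The plan is to verify the two statements in turn, invoking only the axioms of a partition and those of an equivalence relation, since this is a standard correspondence.

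For the first direction, I would fix a partition \(P = \{X_j \colon j \in J\}\) and check the three axioms for \(R_P\). Reflexivity is immediate: for every \(x \in X\), the covering condition \(\bigcup_{j \in J} X_j = X\) gives some \(j \in J\) with \(x \in X_j\), and hence \(\langle x, x\rangle \in R_P\). Symmetry is built into the definition since the conditions \(x \in X_j\) and \(y \in X_j\) are interchangeable. For transitivity, if \(\langle x, y\rangle \in R_P\) and \(\langle y, z\rangle \in R_P\), then \(x, y \in X_{j_1}\) and \(y, z \in X_{j_2}\) for some \(j_1, j_2 \in J\); since \(y \in X_{j_1} \cap X_{j_2}\) and distinct blocks are disjoint, we conclude \(j_1 = j_2\), so \(x, z\) both lie in a common block and \(\langle x, z\rangle \in R_P\). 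To see that the equivalence classes are exactly the blocks \(X_j\), fix \(a \in X_j\) and observe from the definition of \(R_P\) together with disjointness that \([a]_{R_P} = \{x \in X \colon \exists i \in J,\ x, a \in X_i\} = X_j\).

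For the converse direction, I would let \(R\) be an equivalence relation on \(X\) and check that \(P_R = \{[a]_R \colon a \in X\}\) is a partition of \(X\). The covering property is immediate: reflexivity gives \(a \in [a]_R\) for every \(a \in X\), so \(\bigcup_{a \in X} [a]_R = X\). Pairwise disjointness of distinct classes is the standard argument: if \([a]_R \cap [b]_R \neq \varnothing\), pick \(c\) in the intersection, so \(\langle c, a\rangle \in R\) and \(\langle c, b\rangle \in R\); symmetry and transitivity then show that \(\langle x, a\rangle \in R\) iff \(\langle x, b\rangle \in R\) for every \(x \in X\), i.e.\ \([a]_R = [b]_R\). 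Thus classes are either equal or disjoint, and the set of \emph{distinct} classes forms a partition whose blocks are exactly these classes.

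There is no real obstacle here; the only minor point to be careful about is the statement's identification of equivalence classes with blocks, which requires checking both that each \([a]_{R_P}\) equals some \(X_j\) and, in the reverse construction, that the indexing by distinct classes is what makes \(P_R\) a set of blocks in the sense of the definition preceding the proposition. Everything else is a direct translation of the axioms of partition and equivalence relation into one another.
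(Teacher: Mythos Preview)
Your proof is correct; it is the standard direct verification of the partition/equivalence-relation correspondence. The paper itself does not prove this proposition but simply refers to \cite[Chapter~II, \S{}~5]{KurMost}, so your argument actually supplies more than the paper does.
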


For the proof, see, for example, \cite[Chapter~II, \S{}~5]{KurMost}.

\begin{lemma}[{\cite[p.~9]{Kelley1965}}]\label{l4.5}
Let \(X\) be a nonempty set. If \(R\) is an equivalence relation on \(X\) and \(P_R = \{X_j \colon j \in J\}\) is the corresponding partition of \(X\), then the equality
\begin{equation}\label{l4.5:e1}
R = \bigcup_{j \in J} X_j^2
\end{equation}
holds.
\end{lemma}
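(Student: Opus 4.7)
The plan is to verify the set equality $R = \bigcup_{j \in J} X_j^2$ by a standard double-inclusion argument, drawing on the characterization of equivalence classes as blocks of the associated partition given by Proposition~\ref{p4.7}. Since the statement is elementary, the only real task is to unpack the definitions carefully; there is no substantive obstacle.

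For the inclusion $R \subseteq \bigcup_{j \in J} X_j^2$, I would fix an arbitrary pair $\langle x, y \rangle \in R$ and use Proposition~\ref{p4.7} to say that the equivalence class $[x]_R$ is one of the blocks $X_j$ of $P_R$; by the definition \eqref{e1.1} applied with $a = x$, we have $x \in [x]_R = X_j$, and $\langle x, y \rangle \in R$ combined with the symmetry of $R$ gives $y \in [x]_R = X_j$. Hence $\langle x, y \rangle \in X_j \times X_j = X_j^2$, so the pair lies in the union.

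For the reverse inclusion $\bigcup_{j \in J} X_j^2 \subseteq R$, I would take $\langle x, y \rangle \in X_j^2$ for some $j \in J$. Since $X_j$ is an equivalence class, Proposition~\ref{p4.7} guarantees there exists $a \in X$ with $X_j = [a]_R$. Then $x, y \in [a]_R$, i.e., $\langle x, a \rangle, \langle y, a \rangle \in R$, and the symmetry and transitivity of $R$ give $\langle x, y \rangle \in R$, completing the proof.

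The hardest part, if one can call it that, is simply bookkeeping the bijective correspondence between indices $j \in J$ and equivalence classes $[a]_R$ provided by Proposition~\ref{p4.7}; both inclusions then follow immediately from the reflexivity, symmetry, and transitivity of $R$ together with \eqref{e1.1}. No calculation beyond chasing the definitions is needed.
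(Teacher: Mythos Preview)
Your proof is correct: the double-inclusion argument using reflexivity, symmetry, and transitivity together with Proposition~\ref{p4.7} and~\eqref{e1.1} is exactly the standard verification, and each step is sound. The paper itself does not supply a proof of Lemma~\ref{l4.5} but only cites \cite[Proposition~2.6]{Dovgoshey2019}, so there is no in-paper argument to compare against; your write-up would serve perfectly well in its place.
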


A proof of this lemma can be found in \cite[Proposition~2.6]{Dovgoshey2019}.

For every partition \(P = \{X_j \colon j \in J\}\) of a nonempty set \(X\) we define a partition \(P \otimes P_S^1\) of \(X^{2}\) by the rule: 
\begin{itemize}
\item A subset \(B\) of \(X^{2}\) is a block of \(P \otimes P_S^1\) if and only if either 
\[
B = \bigcup_{j \in J} X_{j}^{2}
\]
or there are \emph{distinct} \(j_1\), \(j_2 \in J\) such that
\[
B = (X_{j_1} \times X_{j_2}) \cup (X_{j_2} \times X_{j_1}).
\]
\end{itemize}

\begin{remark}\label{r4.5}
If \(R_P\) is the equivalence relation corresponding to a partition \(P = \{X_j \colon j \in J\}\) of \(X\), then using Lemma~\ref{l4.5} we see that \(R_P\) belongs to the partition \(P \otimes P_S^{1}\).
\end{remark}

\begin{lemma}\label{l4.9}
Let \(X\) be a nonempty set and let \(\Phi\), \(\Psi\) be mappings such that \(\dom \Phi = \dom \Psi = X^{2}\). If the equality 
\begin{equation}\label{l4.9:e1}
P_{\Phi^{-1}} = P_{\Psi^{-1}}
\end{equation}
holds, then \(\Phi\) and \(\Psi\) are combinatorially similar.
\end{lemma}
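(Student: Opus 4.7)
The plan is to specialize to $Y = X$ and take $g := \mathrm{id}_X$, so that verifying Definition~\ref{d2.17} reduces to producing a single bijection $f \colon \Phi(X^{2}) \to \Psi(X^{2})$ with $\Psi(x,y) = f(\Phi(x,y))$ for all $x$, $y \in X$. The entire content of the lemma then becomes unpacking the definition of equality of partitions given just before the statement.

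First, I would invoke that definition on the hypothesis $P_{\Phi^{-1}} = P_{\Psi^{-1}}$. Since $P_{\Phi^{-1}} = \{\Phi^{-1}(a) \colon a \in \Phi(X^{2})\}$ and $P_{\Psi^{-1}} = \{\Psi^{-1}(b) \colon b \in \Psi(X^{2})\}$ are partitions of $X^{2}$ indexed by $\Phi(X^{2})$ and $\Psi(X^{2})$ respectively, the equality supplies a bijection $\sigma \colon \Phi(X^{2}) \to \Psi(X^{2})$ such that
\[
\Phi^{-1}(a) = \Psi^{-1}(\sigma(a)) \quad \text{for every } a \in \Phi(X^{2}).
\]
I would set $f := \sigma$. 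Well-definedness of the correspondence $a \mapsto \sigma(a)$ is immediate from the fact that $P_{\Phi^{-1}}$ and $P_{\Psi^{-1}}$ are genuine partitions: each $\langle x, y\rangle \in X^{2}$ lies in exactly one fiber of $\Phi$ and in exactly one fiber of $\Psi$, so matching a block of $P_{\Phi^{-1}}$ with the (unique) block of $P_{\Psi^{-1}}$ that coincides with it set-theoretically gives an unambiguous index bijection.

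The verification of~\eqref{d2.17:e1} is then a one-line check: for any $\langle x, y\rangle \in X^{2}$, set $a := \Phi(x, y)$; then
\[
\langle x, y\rangle \in \Phi^{-1}(a) = \Psi^{-1}(f(a)),
\]
hence $\Psi(x, y) = f(a) = f(\Phi(g(x), g(y)))$ with $g = \mathrm{id}_X$. This gives commutativity of diagram~\eqref{eq1.3} and completes the argument.

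In fact there is no real obstacle here; the only point that requires any care is a notational one, namely recognizing that the definition of partition equality given in the paper already packages precisely the index bijection $f$ needed for combinatorial similarity, so that taking $g$ to be the identity trivially works. This also explains why the lemma is stated with $\dom \Phi = \dom \Psi = X^{2}$ rather than with two different underlying sets: without a shared ambient set the equality $P_{\Phi^{-1}} = P_{\Psi^{-1}}$ would have no meaning, and $g$ would need to be chosen nontrivially.
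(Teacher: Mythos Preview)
Your proposal is correct and follows essentially the same argument as the paper: extract the index bijection \(f \colon \Phi(X^{2}) \to \Psi(X^{2})\) from the definition of equality of partitions, and take \(g\) to be the identity on \(X\) so that diagram~\eqref{eq1.3} commutes. The paper presents this a bit more tersely (via a commutative triangle), but the content is identical.
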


\begin{proof}
Let
\[
P_{\Phi^{-1}} = \{\Phi^{-1}(a) \colon a \in \Phi(X^{2})\} \quad \text{and} \quad  P_{\Psi^{-1}} = \{\Psi^{-1}(a) \colon a \in \Psi(X^{2})\}
\]
be equal. Then there is a bijection \(f \colon \Phi(X^{2}) \to \Psi(X^{2})\) for which the diagram 
\[
\ctdiagram{
\ctv 50,30:{X^{2}}
\ctv 0,-30:{\Phi(X^{2})}
\ctv 100,-30:{\Psi(X^{2})}
\ctel 50,30,0,-30:{\Phi}
\cter 50,30,100,-30:{\Psi}
\ctet 0,-30,100,-30:{f}
}
\]
is commutative. It implies the commutativity of diagram~\eqref{eq1.3} with \(X = Y\) and \(g\) the identity mapping of \(X\).
\end{proof}

\begin{definition}\label{d4.10}
Let \(X\) be a nonempty set and let \(Q\) be a partition of the set \(X^{2}\). Then \(Q\) is \emph{symmetric} if the equivalence 
\[
\bigl(\<x, y> \in B\bigr) \Leftrightarrow \bigl(\<y, x> \in B\bigr)
\]
is valid for each block \(B\) of \(Q\) and every \(\<x, y> \in X^{2}\). 
\end{definition}

Thus, \(Q\) is a symmetric partition of \(X^{2}\) if every block of \(Q\) is a symmetric binary relation on \(X\).

\begin{lemma}[{\cite[Lemma~2.21]{Dovgoshey2019}}]\label{l4.8}
Let \(X\) be a nonempty set and let \(\Phi\) be a mapping with the domain \(X^{2}\). Then the mapping \(\Phi\) is symmetric if and only if the partition \(P_{\Phi^{-1}}\) is symmetric. 
\end{lemma}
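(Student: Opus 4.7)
The plan is to prove both implications of Lemma~\ref{l4.8} by directly unpacking the two definitions involved: symmetry of the mapping \(\Phi\) (i.e., \(\Phi(x,y)=\Phi(y,x)\) for all \(x,y\in X\)) and symmetry of the partition \(P_{\Phi^{-1}}\) in the sense of Definition~\ref{d4.10} (i.e., \(\langle x,y\rangle\in B\Leftrightarrow\langle y,x\rangle\in B\) for every block \(B\)). The blocks of \(P_{\Phi^{-1}}\) are exactly the fibers \(\Phi^{-1}(a)\) for \(a\in\Phi(X^{2})\), and by definition \(\langle x,y\rangle\in\Phi^{-1}(a)\) is equivalent to \(\Phi(x,y)=a\); this single observation is the link between the two notions.

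For the forward direction, I would assume \(\Phi\) is symmetric and fix an arbitrary block \(B=\Phi^{-1}(a)\) with \(a\in\Phi(X^{2})\). For any \(\langle x,y\rangle\in X^{2}\), the chain of equivalences \(\langle x,y\rangle\in B\Leftrightarrow\Phi(x,y)=a\Leftrightarrow\Phi(y,x)=a\Leftrightarrow\langle y,x\rangle\in B\) finishes the argument, where the middle equivalence uses the assumed symmetry of \(\Phi\).

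For the converse, I would assume \(P_{\Phi^{-1}}\) is symmetric and fix arbitrary \(x,y\in X\). Set \(a:=\Phi(x,y)\), so that \(\langle x,y\rangle\in\Phi^{-1}(a)\), which is a block of \(P_{\Phi^{-1}}\). By the assumed symmetry of the partition applied to this block, \(\langle y,x\rangle\in\Phi^{-1}(a)\), whence \(\Phi(y,x)=a=\Phi(x,y)\). Since \(x,y\) were arbitrary, \(\Phi\) is symmetric.

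There is no genuine obstacle here; the lemma is essentially a translation between a pointwise condition on \(\Phi\) and the corresponding condition on its fibers, and both directions reduce to one line after rewriting \(\Phi^{-1}(a)\) in terms of \(\Phi\). The only care needed is to range over all blocks (equivalently all values \(a\in\Phi(X^{2})\)) in the forward direction and to choose the appropriate fiber \(\Phi^{-1}(\Phi(x,y))\) in the backward direction, so that the chosen element \(\langle x,y\rangle\) actually lies in that block.
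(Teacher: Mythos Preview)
Your proof is correct; both directions are straightforward translations between the pointwise symmetry condition on \(\Phi\) and the symmetry condition on its fibers, exactly as you describe. The paper itself does not prove this lemma but merely cites it from \cite[Lemma~2.21]{Dovgoshey2019}, so there is no in-paper argument to compare against; your direct argument is the natural one and would be what any proof of this elementary fact looks like.
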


\begin{theorem}\label{p4.6}
Let \(X\) be a nonempty set and let \(\Phi\) be a mapping with the domain \(X^{2}\). Then the following conditions are equivalent:
\begin{enumerate}
\item\label{p4.6:s1} There is a strongly rigid pseudometric \(d \colon X^{2} \to \RR\) such that \(P_{\Phi^{-1}} = P_{d^{-1}}\).
\item\label{p4.6:s2} \(\Phi\) is combinatorially similar to a strongly rigid pseudometric.
\item\label{p4.6:s3} There is a partition \(P\) of \(X\) such that the partitions \(P \otimes P_{S}^{1}\) and \(P_{\Phi^{-1}}\) are equal, \(P \otimes P_{S}^{1} = P_{\Phi^{-1}}\), and the inequality \(|P| \leqslant \mathfrak{c}\) holds.
\end{enumerate}
\end{theorem}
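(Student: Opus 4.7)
The plan is to prove the cycle \(\ref{p4.6:s1} \Rightarrow \ref{p4.6:s2} \Rightarrow \ref{p4.6:s3} \Rightarrow \ref{p4.6:s1}\). The first implication is immediate: if \(P_{\Phi^{-1}} = P_{d^{-1}}\) for some strongly rigid pseudometric \(d\), then Lemma~\ref{l4.9} (with \(g = \mathrm{id}_X\)) gives that \(\Phi\) is combinatorially similar to \(d\).

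For \(\ref{p4.6:s2} \Rightarrow \ref{p4.6:s3}\), I would read off from Proposition~\ref{p4.4}\ref{p4.4:s3} that \(\Phi\) is symmetric, \(a_0\)-coherent for some \(a_0 \in \Phi(X^{2})\), and obeys the strong rigidity condition, with \(|\Phi(X^2)| \leqslant \mathfrak{c}\). Let \(P = \{X_j \colon j \in J\}\) be the partition of \(X\) corresponding via Proposition~\ref{p4.7} to the equivalence relation \(\Phi^{-1}(a_0)\); by Lemma~\ref{l4.5}, \(\Phi^{-1}(a_0) = \bigcup_{j \in J} X_j^{2}\), which is precisely the ``diagonal'' block of \(P \otimes P_S^{1}\). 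For each \(a \in \Phi(X^{2}) \setminus \{a_0\}\), fix any \(\<x, y> \in \Phi^{-1}(a)\); since \(\Phi(x, y) \neq a_0\), the points \(x\) and \(y\) lie in distinct blocks \(X_{j_1}\), \(X_{j_2}\) of \(P\). The strong rigidity clause of \ref{p4.4:s3} applied to an arbitrary \(\<u, v> \in \Phi^{-1}(a)\) forces \(\<u, v> \in (X_{j_1} \times X_{j_2}) \cup (X_{j_2} \times X_{j_1})\), and the \(a_0\)-coherence implication \eqref{ch2:d4:e2} together with the symmetry of \(\Phi\) gives the reverse inclusion. Hence \(P_{\Phi^{-1}} = P \otimes P_S^{1}\). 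The cardinality bound \(|P| \leqslant \mathfrak{c}\) is extracted from \(|\Phi(X^{2})| \leqslant \mathfrak{c}\): when \(|P|\) is finite there is nothing to check, and when \(|P|\) is infinite the number of unordered pairs of distinct blocks equals \(|P|\), so \(|P| \leqslant |\Phi(X^{2})| \leqslant \mathfrak{c}\).

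For \(\ref{p4.6:s3} \Rightarrow \ref{p4.6:s1}\), I would construct \(d\) directly on \(X\). If \(|J| = 1\), set \(d \equiv 0\). Otherwise the blocks of \(P \otimes P_S^{1}\) are \(B_0 := \bigcup_j X_j^{2}\) together with \(B_{\{j_1, j_2\}} := (X_{j_1} \times X_{j_2}) \cup (X_{j_2} \times X_{j_1})\) for each unordered pair \(\{j_1, j_2\}\) with \(j_1 \neq j_2\); there are at most \(\mathfrak{c}\) such pairs. Since \(|[1, 2^{1/2}]| = \mathfrak{c}\), I can pick an injective assignment \(\{j_1, j_2\} \mapsto r_{\{j_1, j_2\}} \in [1, 2^{1/2}]\), and define \(d = 0\) on \(B_0\) and \(d = r_{\{j_1, j_2\}}\) on \(B_{\{j_1, j_2\}}\). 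The argument in the proof of Theorem~\ref{ch2:p7} then applies verbatim to yield the triangle inequality: when all three distances are nonzero they lie in \([1, 2^{1/2}]\), and the vanishing cases are handled by \(B_0\) being an equivalence relation (so \(d(x, z) = 0\) forces \(d(x, y) = d(z, y)\)). Strong rigidity is built in by the injectivity of \(r_{\bullet}\), and by construction \(P_{d^{-1}} = P \otimes P_S^{1} = P_{\Phi^{-1}}\).

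The main obstacle is the bookkeeping in \(\ref{p4.6:s2} \Rightarrow \ref{p4.6:s3}\): one must show that each nonzero fiber of \(\Phi\) coincides with exactly one off-diagonal block \(B_{\{j_1, j_2\}}\), and this requires invoking both the strong rigidity condition from Proposition~\ref{p4.4}\ref{p4.4:s3} (for the inclusion \(\Phi^{-1}(a) \subseteq B_{\{j_1, j_2\}}\)) and the full force of \(a_0\)-coherence combined with symmetry (for \(\Phi^{-1}(a) \supseteq B_{\{j_1, j_2\}}\)).
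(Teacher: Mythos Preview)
Your argument is correct and relies on the same ingredients as the paper (Lemma~\ref{l4.9}, Proposition~\ref{p4.4}, Lemma~\ref{l4.5}), but the cycle is organized differently. The paper establishes \(\ref{p4.6:s1} \Leftrightarrow \ref{p4.6:s2}\) first (the direction \(\ref{p4.6:s2} \Rightarrow \ref{p4.6:s1}\) by transporting a strongly rigid pseudometric on \(Y\) back to \(X\) along the bijection \(g\)), then proves \(\ref{p4.6:s1} \Rightarrow \ref{p4.6:s3}\) by working with the pseudometric \(d\) directly, and closes with \(\ref{p4.6:s3} \Rightarrow \ref{p4.6:s2}\) by introducing the natural projection \(\Psi \colon X^{2} \to P \otimes P_S^{1}\) and verifying condition~\ref{p4.4:s3} of Proposition~\ref{p4.4} for \(\Psi\). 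You instead prove \(\ref{p4.6:s2} \Rightarrow \ref{p4.6:s3}\) by applying Proposition~\ref{p4.4}\ref{p4.4:s3} to \(\Phi\) itself---this is the paper's \(\ref{p4.6:s1} \Rightarrow \ref{p4.6:s3}\) computation with \(d\) replaced by \(\Phi\), your appeal to~\eqref{ch2:d4:e2} playing the role of the paper's use of Theorem~\ref{p2.7}---and then close the cycle at \(\ref{p4.6:s3} \Rightarrow \ref{p4.6:s1}\) by writing down the strongly rigid pseudometric explicitly. Your route is slightly more economical in that it avoids both the transport step and the auxiliary \(\Psi\); the paper's route has the minor advantage that its \(\ref{p4.6:s3} \Rightarrow \ref{p4.6:s2}\) step does not repeat the \([1,2^{1/2}]\) construction already carried out in Theorem~\ref{ch2:p7}.
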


\begin{proof}
\(\ref{p4.6:s1} \Leftrightarrow \ref{p4.6:s2}\). The implication \(\ref{p4.6:s1} \Rightarrow \ref{p4.6:s2}\) is valid by Lemma~\ref{l4.9}. Let \(\Phi\) be combinatorially similar to a strongly rigid pseudometric \(\rho \colon Y^{2} \to \RR\). Then using the definition of combinatorial similarity with \(\Psi = \rho\) we obtain the commutative diagram
\begin{equation*}
\ctdiagram{
\ctv 0,25:{X^{2}}
\ctv 100,25:{Y^{2}}
\ctv 0,-25:{\Phi(X^{2})}
\ctv 100,-25:{\rho(Y^{2})}
\ctet 100,25,0,25:{g\otimes g}
\ctet 0,-25,100,-25:{f}
\ctel 0,25,0,-25:{\Phi}
\cter 100,25,100,-25:{\rho}
}
\end{equation*}
with bijections \(f\) and \(g\). Then the following diagram is also commutative:
\begin{equation}\label{p4.6:e0}
\ctdiagram{
\ctv 0,25:{X^{2}}
\ctv 100,25:{Y^{2}}
\ctv 0,-25:{\Phi(X^{2})}
\ctv 100,-25:{\rho(Y^{2})}
\ctet 0,25,100,25:{g^{-1}\otimes g^{-1}}
\ctet 100,-25,0,-25:{f^{-1}}
\ctel 0,25,0,-25:{\Phi}
\cter 100,25,100,-25:{\rho}
}
\end{equation}
(\(\Phi\) and \(\rho\) are combinatorially similar if and only if \(\rho\) and \(\Phi\) are combinatorially similar). Write \(d\) for
\[
X^{2} \xrightarrow{g^{-1}\otimes g^{-1}} Y^{2} \xrightarrow{\rho} \RR.
\]
Then \(d\) is a strongly rigid pseudometric on \(X\). The equality
\[
P_{\Phi^{-1}} = P_{d^{-1}}
\]
follows from the commutativity of~\eqref{p4.6:e0}. Hence, \(\ref{p4.6:s2} \Rightarrow \ref{p4.6:s1}\) is true. The validity of \(\ref{p4.6:s1} \Leftrightarrow \ref{p4.6:s2}\) follows.

\(\ref{p4.6:s1} \Rightarrow \ref{p4.6:s3}\). Let \(d \colon X^{2} \to \RR\) be a strongly rigid pseudometric on \(X\) satisfying the equalities
\[
P_{\Phi^{-1}} = P_{d^{-1}} = \{d^{-1}(r) \colon r \in d(X^{2})\}.
\]

Let us denote by \(P = \{X_j \colon j \in J\}\) the partition of \(X\) corresponding to the equivalence relation \(\coherent{0}\) generated by \(d\) (see formula \eqref{ch2:p1:e1}). We claim that \(P_{d^{-1}}\) and \(P \otimes P_{S}^{1}\) are equal as partitions of \(X^{2}\),
\begin{equation}\label{p4.6:e1}
P_{d^{-1}} = P \otimes P_{S}^{1}.
\end{equation}
Since \(d^{-1}(0) = \coherent{0}\) is a block of \(P_{d^{-1}}\), from Lemma~\ref{l4.5} it follows that
\[
\bigcup_{j \in J} X_j^{2} \in P_{d^{-1}}.
\]
Suppose now \(r \in d(X^{2})\setminus \{0\}\) and consider the block \({\coherent{r}} = d^{-1}(r)\) of \(P_{d^{-1}}\). Then there are distinct \(x\), \(y \in X\) such that
\begin{equation}\label{p4.6:e2}
d(x, y) = d(y, x) = r > 0.
\end{equation}
Since \(d\) is \(0\)-coherent, Theorem~\ref{p2.7}, Lemma~\ref{l4.5} and condition~\eqref{p4.6:e2} imply
\begin{align*}
\coherent{r} &= \coherent{0} \circ \coherent{r} \circ \coherent{0} = \left(\bigcup_{j \in J} X_j^{2}\right) \circ \coherent{r} \circ \left(\bigcup_{j \in J} X_j^{2}\right) \\
& \supseteq \left(\bigcup_{j \in J} X_j^{2}\right) \circ (\{\<x, y>, \<y, x>\}) \circ \left(\bigcup_{j \in J} X_j^{2}\right)\\
& = (X_{j_x} \times X_{j_y}) \cup (X_{j_y} \times X_{j_x}),
\end{align*}
where \(X_{j_x}\) and \(X_{j_y}\) are the blocks of \(P_{d^{-1}}\) such that \(x \in X_{j_x}\) and \(y \in X_{j_y}\). Consequently, we have
\[
\coherent{r} \supseteq (X_{j_x} \times X_{j_y}) \cup (X_{j_y} \times X_{j_x}).
\]
If \(\<u, v>\) is an arbitrary point of the block \(\coherent{r}\), then \(d(x, y) = d(u, v)\) holds. This equality and~\eqref{p4.6:e2} imply \eqref{d4.3:e2}. Now using Proposition~\ref{p4.7} and condition \eqref{d4.3:e2} we obtain
\[
(u \in X_{j_x} \text{ and } v \in X_{j_y}) \text{ or } (u \in X_{j_y} \text{ and } v \in X_{j_x}).
\]
Hence, 
\[
\<u, v> \in (X_{j_x} \times X_{j_y}) \cup (X_{j_y} \times X_{j_x})
\]
holds for all \(\<u, v> \in \coherent{r}\). It implies
\[
\coherent{r} = (X_{j_x} \times X_{j_y}) \cup (X_{j_y} \times X_{j_x}).
\]
Thus, the partition \(P_{d^{-1}}\) is a subset of the partition \(P \otimes P_S^{1}\). It should be noted that if we have two partitions of the same set and one of these partitions is a subset of the other, then the partitions are equal. Equality~\eqref{p4.6:e1} follows.

To complete the proof of validity of \(\ref{p4.6:s1} \Rightarrow \ref{p4.6:s3}\) we note that the inequality \(|P| \leqslant \mathfrak{c}\) follows from
\[
|P \otimes P_S^{1}| = |P_{d^{-1}}| = |\{d^{-1}(z) \colon z \in d(X^{2})\}|
\]
and the inequality \(|d(X^{2})| \leqslant \mathfrak{c}\).

\(\ref{p4.6:s3} \Rightarrow \ref{p4.6:s2}\). Let \(P := \{X_j \colon j \in J\}\) be a partition of \(X\) such that
\[
P \otimes P_S^{1} = P_{\Phi^{-1}} \quad \text{and} \quad |P| \leqslant \mathfrak{c}.
\]
Let us define a mapping \(\Psi \colon X^{2} \to P \otimes P_{S}^{1}\) such that
\[
(\Psi(\<x, y>) = b) \Leftrightarrow (\<x,y> \in b)
\]
for every ordered pair \(\<x,y> \in X^{2}\) and every block \(b \in P \otimes P_{S}^{1}\). (If \(R_{P \otimes P_{S}^{1}}\) is the equivalence relation on \(X^{2}\) corresponding to the partition \(P \otimes P_{S}^{1}\) of \(X^{2}\), then \(P \otimes P_{S}^{1}\) is the quotient set of \(X^{2}\) with respect to \(R_{P \otimes P_{S}^{1}}\) and \(\Psi\) is the natural projection of \(X^{2}\) onto this quotient set.) It is clear that \(P_{\Psi^{-1}} = P \otimes P_{S}^{1}\) holds. Consequently, we have \(P_{\Psi^{-1}} = P_{\Phi^{-1}}\). By Lemma~\ref{l4.9}, the mappings \(\Phi\) and \(\Psi\) are combinatorially similar. Thus, it suffices to show that \(\Psi\) is combinatorially similar to a strongly rigid pseudometric. 

By Lemma~\ref{l4.8}, the mapping \(\Psi\) is symmetric, because the partition \(P \otimes P_{S}^{1}\) is evidently symmetric. Since \(\Psi\) and \(\Phi\) are combinatorially similar, the inequality \(|\Phi(X^{2})| \leqslant \mathfrak{c}\) implies the inequality \(|\Psi(X^{2})| \leqslant \mathfrak{c}\). In accordance with Remark~\ref{r4.5}, we have \(R_{P} \in P \otimes P_{S}^{1}\). From the definition of \(P \otimes P_{S}^{1}\) and the definition of composition of binary relations it follows that \(R_{P}\) is the identity element of the semigroup \(\mathcal{B}_X(A)\) with
\[
A := \{\Psi^{-1}(b) \colon b \in \Psi(X^{2})\}.
\]
Hence, by Theorem~\ref{p2.7}, the mapping \(\Psi\) is \(b_0\)-coherent with \(b_0 = R_{P}\).

Now suppose that
\begin{equation}\label{p4.6:e3}
\Psi(x, y) = \Psi(u, v) \neq R_{P}.
\end{equation}
Then using the definitions of \(\Psi\) and \(P \otimes P_{S}^{1}\) we can find some distinct \(j_1\), \(j_2 \in J\) such that
\[
\<x, y> \in (X_{j_1} \times X_{j_2}) \cup (X_{j_2} \times X_{j_1})
\]
and 
\[
\<u, v> \in (X_{j_1} \times X_{j_2}) \cup (X_{j_2} \times X_{j_1}).
\]
Suppose \(\<x, y> \in X_{j_1} \times X_{j_2}\). Then we obtain either
\begin{equation}\label{p4.6:e4}
u \coherent{b_0} x \text{ and } v \coherent{b_0} y
\end{equation}
if \(\<u, v> \in X_{j_1} \times X_{j_2}\) or 
\begin{equation}\label{p4.6:e5}
u \coherent{b_0} y \text{ and } v \coherent{b_0} x
\end{equation}
if \(\<u, v> \in X_{j_2} \times X_{j_1}\). Analogously, for \(\<x, y> \in X_{j_2} \times X_{j_1}\), we also have either \eqref{p4.6:e4} or \eqref{p4.6:e5}. Hence \(\Psi\) is combinatorially similar to a strongly rigid pseudometric by Proposition~\ref{p4.4}.
\end{proof}

\begin{corollary}\label{c4.14}
Let \(X\) be a nonempty set and let \(d \colon X^{2} \to \RR\) be a pseudometric. Then \(d\) is strongly rigid if and only if 
\[
P_{d^{-1}} = P \otimes P_{S}^{1}
\]
holds with the partition \(P\) of \(X\) corresponding to the equivalence relation \(\coherent{0}\) generated by \(d\).
\end{corollary}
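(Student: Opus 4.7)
The plan is to derive the statement directly from Theorem~\ref{p4.6} applied with $\Phi = d$. Assume first that $d$ is strongly rigid. Then condition~\ref{p4.6:s2} of Theorem~\ref{p4.6} is satisfied trivially (via the identity combinatorial similarity of $d$ with itself), so the implication \ref{p4.6:s2}$\Rightarrow$\ref{p4.6:s3} furnishes some partition $Q = \{X_j \colon j \in J\}$ of $X$ with $P_{d^{-1}} = Q \otimes P_S^{1}$. The only remaining task in this direction is to identify $Q$ with the canonical partition $P$ of $X$ associated to $\coherent{0}$.

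For this identification I would argue as follows. The set $d^{-1}(0) = {\coherent{0}}$ is a block of $P_{d^{-1}} = Q \otimes P_S^{1}$ and it contains the diagonal $\Delta_X$. Among the blocks of $Q \otimes P_S^{1}$, however, only $\bigcup_{j \in J} X_j^{2}$ meets $\Delta_X$, since every off-diagonal block $(X_{j_1} \times X_{j_2}) \cup (X_{j_2} \times X_{j_1})$ with $j_1 \neq j_2$ is disjoint from $\Delta_X$. Invoking Lemma~\ref{l4.5} then gives ${\coherent{0}} = R_Q$, and Proposition~\ref{p4.7} forces $Q = P$, so $P_{d^{-1}} = P \otimes P_S^{1}$ as required.

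For the converse, suppose $P_{d^{-1}} = P \otimes P_S^{1}$ and take $x, y, u, v \in X$ with $d(x, y) = d(u, v) = r \neq 0$. Both $\<x, y>$ and $\<u, v>$ lie in the same nonzero block of $P \otimes P_S^{1}$, which by definition has the form $(X_{j_1} \times X_{j_2}) \cup (X_{j_2} \times X_{j_1})$ for some distinct $j_1, j_2 \in J$. A short case analysis on which of the two cross products each ordered pair inhabits yields either $x, u$ in a common block of $P$ and $y, v$ in another common block, or $x, v$ in a common block and $y, u$ in another. Since membership in a common block of $P$ is exactly the relation $\coherent{0}$, this is precisely~\eqref{d4.3:e2}, and $d$ is strongly rigid.

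There is no genuine obstacle here: Theorem~\ref{p4.6} carries the substantive content, and the corollary reduces to bookkeeping that pinpoints which partition of $X$ is at play. The only point requiring care is the reflexivity distinction between the two types of blocks in the product partition, which singles out the $\coherent{0}$-block and thereby identifies $Q$ with $P$ in the forward direction.
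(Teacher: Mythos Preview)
Your argument is correct in both directions, but it differs from the paper's in each half. For the ``only if'' direction, the paper simply points back to the proof of Theorem~\ref{p4.6}, where in establishing \ref{p4.6:s1}$\Rightarrow$\ref{p4.6:s3} the partition is \emph{already taken} to be the one associated with $\coherent{0}$, so equality~\eqref{p4.6:e1} is exactly the desired conclusion. You instead invoke the theorem as a black box to obtain some partition $Q$ and then add a short reflexivity argument to pin down $Q=P$; this is a small extra step, but it has the virtue of using only the statement of the theorem rather than its proof. For the ``if'' direction the paper again appeals to Theorem~\ref{p4.6}, obtaining a strongly rigid pseudometric $\rho$ combinatorially similar to $d$ and then transferring strong rigidity along the similarity; your direct case analysis on the block $(X_{j_1}\times X_{j_2})\cup(X_{j_2}\times X_{j_1})$ containing $\<x,y>$ and $\<u,v>$ is more elementary and avoids that detour entirely. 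Either route is fine; yours is a bit more self-contained, while the paper's is terser because it leans on work already done inside Theorem~\ref{p4.6}.
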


\begin{proof}
The ``only if'' claim follows from Theorem~\ref{p4.6} and formula~\eqref{p4.6:e1} in its proof. For the ``if'' claim, note that Theorem~\ref{p4.6} yields a strongly rigid pseudometric \(\rho\) on \(X\) combinatorially similar to \(d\), which implies that \(d\), too, is strongly rigid.
\end{proof}

Let us recall the concept of isomorphic semigroups.

\begin{definition}\label{d4:13}
Let \((\mathcal{S}, \circ)\) and \((\mathcal{H}, *)\) be semigroups. A mapping \(F \colon \mathcal{S} \to \mathcal{H}\) is a \emph{homomorphism} if
\[
F(s_1 \circ s_2) = F(s_1) * F(s_2)
\]
holds for all \(s_1\), \(s_2 \in \mathcal{S}\). A bijective homomorphism is an \emph{isomorphism}.

Semigroups \(\mathcal{S}\) and \(\mathcal{H}\) are \emph{isomorphic} if there is an isomorphism \(F \colon \mathcal{S} \to \mathcal{H}\).
\end{definition}

\begin{proposition}\label{p4.14}
Let \(X\), \(Y\) be nonempty sets and let \(\Phi\), \(\Psi\) be mappings with the domains \(X^{2}\) and \(Y^{2}\), respectively. If \(\Phi\) and \(\Psi\) are combinatorially similar, then the semigroups \(\mathcal{B}_X(P_{\Phi^{-1}})\) and \(\mathcal{B}_Y(P_{\Psi^{-1}})\) are isomorphic, where \(P_{\Phi^{-1}}\) and \(P_{\Psi^{-1}}\) are the partitions of \(X^{2}\) and, respectively, of \(Y^{2}\) defined as in~\eqref{e4.4}.
\end{proposition}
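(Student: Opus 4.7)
The plan is to lift the combinatorial similarity between \(\Phi\) and \(\Psi\) to an isomorphism between the ambient semigroups \((\mathcal{B}_Y, \circ)\) and \((\mathcal{B}_X, \circ)\), and then to observe that this isomorphism restricts to one between the subsemigroups generated by the fiber partitions. Invoking Definition~\ref{d2.17}, I first fix bijections \(f \colon \Phi(X^2) \to \Psi(Y^2)\) and \(g \colon Y \to X\) for which diagram~\eqref{eq1.3} commutes. Since \(g\) is a bijection, so is \(g \otimes g \colon Y^{2} \to X^{2}\), and the assignment
\[
F(R) := (g \otimes g)(R) = \{\<g(y_1), g(y_2)> \colon \<y_1, y_2> \in R\}
\]
defines a bijection \(F \colon \mathcal{B}_Y \to \mathcal{B}_X\) whose inverse is \(R' \mapsto (g^{-1} \otimes g^{-1})(R')\).

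Next I would verify that \(F\) preserves composition. Given \(R_1\), \(R_2 \in \mathcal{B}_Y\), a pair \(\<x, z>\) lies in \(F(R_1 \circ R_2)\) if and only if \(\<g^{-1}(x), g^{-1}(z)> \in R_1 \circ R_2\), which by the definition of \(\circ\) is equivalent to the existence of \(w \in Y\) with \(\<g^{-1}(x), w> \in R_1\) and \(\<w, g^{-1}(z)> \in R_2\). Setting \(y := g(w)\) and using again that \(g\) is a bijection, this is in turn equivalent to the existence of \(y \in X\) with \(\<x, y> \in F(R_1)\) and \(\<y, z> \in F(R_2)\), i.e.\ to \(\<x, z> \in F(R_1) \circ F(R_2)\). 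Hence \(F\) is a semigroup isomorphism of \((\mathcal{B}_Y, \circ)\) onto \((\mathcal{B}_X, \circ)\).

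It remains to match the two generating sets. The commutativity of~\eqref{eq1.3} reads \(\Psi(y_1, y_2) = f(\Phi(g(y_1), g(y_2)))\), which gives \(\Psi^{-1}(b) = (g \otimes g)^{-1}(\Phi^{-1}(f^{-1}(b)))\) for every \(b \in \Psi(Y^{2})\). Applying \(g \otimes g\) to both sides yields \(F(\Psi^{-1}(b)) = \Phi^{-1}(f^{-1}(b))\), so the bijectivity of \(f\) shows that \(F\) maps \(P_{\Psi^{-1}}\) bijectively onto \(P_{\Phi^{-1}}\). Because \(F\) is a composition-preserving bijection carrying one generating set onto the other, its restriction is an isomorphism of \(\mathcal{B}_Y(P_{\Psi^{-1}})\) onto \(\mathcal{B}_X(P_{\Phi^{-1}})\). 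I expect no serious obstacle; the only step requiring care is the composition-preservation calculation, which is short but uses the bijectivity of \(g\) in an essential way to substitute the intermediate variable.
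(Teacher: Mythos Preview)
Your proof is correct and follows essentially the same route as the paper: the map you call \(F\) is exactly the paper's \(\widehat{g}\), and both arguments proceed by noting that this bijection \(\mathcal{B}_Y \to \mathcal{B}_X\) preserves composition and carries the generating set \(P_{\Psi^{-1}}\) onto \(P_{\Phi^{-1}}\). You in fact give more detail than the paper, which merely asserts that \(\widehat{g}\) is an isomorphism and phrases the matching of generators via the equality \(P_{\Phi^{-1}} = P_{(f\circ\Phi)^{-1}}\) rather than your explicit computation \(F(\Psi^{-1}(b)) = \Phi^{-1}(f^{-1}(b))\).
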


\begin{proof}
Let \(\Phi\) and \(\Psi\) be combinatorially similar. Then there are bijections \(f\colon \Phi(X^{2}) \to \Psi(Y^{2})\) and \(g \colon Y \to X\) such that
\begin{equation}\label{p4.14:e1}
\Psi(\<x,y>) = f(\Phi(\<g(x), g(y)>))
\end{equation}
holds for every \(\<x,y> \in Y^{2}\) (see diagram~\eqref{eq1.3}). It is easy to prove that the mapping \(\widehat{g} \colon \mathcal{B}_Y \to \mathcal{B}_X\) defined as
\[
\widehat{g}(A) := \{\<g(x), g(y)> \colon \<x, y> \in A\}
\]
for \(A \subseteq Y^{2}\) is an isomorphism of the semigroups \(\mathcal{B}_X\) and \(\mathcal{B}_Y\). Moreover, since \(f\) is bijective, the partitions \(P_{\Phi^{-1}}\) and \(P_{(f \circ \Phi)^{-1}}\) of \(X^{2}\) are equal, \(P_{\Phi^{-1}} = P_{(f \circ \Phi)^{-1}}\), where \(f \circ \Phi\) is the mapping
\[
X^{2} \xrightarrow{\Phi} \Phi(X^{2}) \xrightarrow{f} \Psi(Y^{2}).
\]
Consequently, the commutativity of diagram~\eqref{eq1.3} implies that the set \(P_{\Phi^{-1}}\) is the image of the set \(P_{\Psi^{-1}}\) under the mapping \(\widehat{g} \colon \mathcal{B}_Y \to \mathcal{B}_X\). Since \(\widehat{g}\) is an isomorphism of \(\mathcal{B}_Y\) and \(\mathcal{B}_X\) and, moreover, \(P_{\Psi^{-1}}\) and \(P_{\Phi^{-1}}\) are the sets of generators of \(\mathcal{B}_Y(P_{\Psi^{-1}})\) and \(\mathcal{B}_X(P_{\Phi^{-1}})\), respectively, the semigroups \(\mathcal{B}_Y(P_{\Psi^{-1}})\) and \(\mathcal{B}_X(P_{\Phi^{-1}})\) are isomorphic.
\end{proof}

\begin{figure}[h]
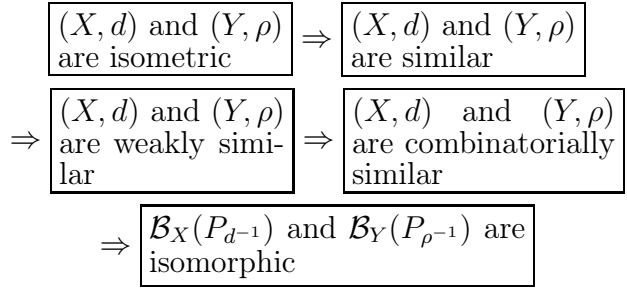

\begin{gather*}
\fbox{\parbox{3cm}{\((X, d)\) and \((Y, \rho)\) are isometric}} \Rightarrow \fbox{\parbox{3cm}{\((X, d)\) and \((Y, \rho)\) are similar}} \\
\Rightarrow \fbox{\parbox{3cm}{\((X, d)\) and \((Y, \rho)\) are weakly similar}} \Rightarrow \fbox{\parbox{3.5cm}{\((X, d)\) and \((Y, \rho)\) are combinatorially similar}} \\
\Rightarrow \fbox{\parbox{5cm}{\(\mathcal{B}_X(P_{d^{-1}})\) and \(\mathcal{B}_Y(P_{\rho^{-1}})\) are isomorphic}}
\end{gather*}
\caption{From isometric pseudometric spaces to isomorphic semigroups}\label{f2}
\end{figure}

In accordance with Proposition~\ref{p4.14}, the chain of implications from Figure~\ref{f1.1} can be extended to the chain in Figure~\ref{f2}.

In the following proposition we use the notation of Proposition~\ref{ch2:p1}.

\begin{proposition}\label{p4.16}
Let \(X\) be a nonempty set, \(d \colon X^{2} \to \RR\) a pseudometric on \(X\) and \(\tilde{d} \colon Y^2 \to \RR\) the metric identification of \(d\), where \(Y = X / \coherent{0}\). Then the semigroups \(\mathcal{B}_X(P_{d^{-1}})\) and \(\mathcal{B}_Y(P_{\tilde{d}^{-1}})\) are isomorphic.
\end{proposition}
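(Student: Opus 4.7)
\emph{Proof plan.} Let $\pi \colon X \to Y$ be the canonical surjection $x \mapsto [x]_{\coherent{0}}$, and define $F \colon \mathcal{B}_Y \to \mathcal{B}_X$ by the preimage rule $F(A) := (\pi \otimes \pi)^{-1}(A)$ for $A \subseteq Y^{2}$. The plan is to show that $F$ is an injective semigroup homomorphism of the full semigroups $(\mathcal{B}_Y, \circ)$ and $(\mathcal{B}_X, \circ)$ that carries $P_{\tilde{d}^{-1}}$ bijectively onto $P_{d^{-1}}$; this forces $F$ to restrict to an isomorphism $\mathcal{B}_Y(P_{\tilde{d}^{-1}}) \to \mathcal{B}_X(P_{d^{-1}})$.

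First I would record the identity $\tilde{d}(\pi(x_1), \pi(x_2)) = d(x_1, x_2)$ supplied by Proposition~\ref{ch2:p1}, which immediately gives $d^{-1}(r) = (\pi \otimes \pi)^{-1}(\tilde{d}^{-1}(r))$ for each $r \in d(X^{2}) = \tilde{d}(Y^{2})$. Thus $F$ restricts to a bijection $P_{\tilde{d}^{-1}} \to P_{d^{-1}}$. Moreover, since $\pi$ is surjective, so is $\pi \otimes \pi$, and consequently the preimage operation is injective on the power set of $Y^{2}$; in particular, $F$ itself is injective on all of $\mathcal{B}_Y$.

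The core step is to verify that $F$ preserves composition. The inclusion $F(A) \circ F(B) \subseteq F(A \circ B)$ is immediate from the definitions. For the reverse inclusion, suppose $\<x, z> \in F(A \circ B)$, so that there exists $\beta \in Y$ with $\<\pi(x), \beta> \in A$ and $\<\beta, \pi(z)> \in B$; using surjectivity of $\pi$, pick any $y \in \pi^{-1}(\beta)$, and then $\<x, y> \in F(A)$ and $\<y, z> \in F(B)$, so $\<x, z> \in F(A) \circ F(B)$. This is the one genuine obstacle in the proof, and it uses in an essential way that $\pi$ is surjective (a preimage map under an arbitrary function need not preserve composition of binary relations).

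With the homomorphism property established, the chain of equalities
\[
F(\mathcal{B}_Y(P_{\tilde{d}^{-1}})) = \mathcal{B}_X(F(P_{\tilde{d}^{-1}})) = \mathcal{B}_X(P_{d^{-1}})
\]
follows formally from the fact that $F$ sends generators to generators. Combining this surjection onto the target with the injectivity of $F$ yields the desired isomorphism of the two generated subsemigroups.
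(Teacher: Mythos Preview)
Your proof is correct. Both you and the paper build the isomorphism from the canonical projection \(\pi \colon X \to Y\), but in opposite directions: the paper uses the \emph{pushforward} \(\Phi \colon \mathcal{B}_X \to \mathcal{B}_Y\), \(\Phi(A) = (\pi\otimes\pi)(A)\), while you use the \emph{pullback} \(F \colon \mathcal{B}_Y \to \mathcal{B}_X\), \(F(A) = (\pi\otimes\pi)^{-1}(A)\). On the generated subsemigroups these two maps are mutual inverses, so the resulting isomorphism is the same. Your route is a bit cleaner: because \(\pi\) is surjective, the pullback \(F\) is a semigroup homomorphism on \emph{all} of \(\mathcal{B}_Y\), and injectivity is automatic. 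By contrast, the pushforward \(\Phi\) is \emph{not} a homomorphism on all of \(\mathcal{B}_X\) (the intermediate point in a composition can land in the right equivalence class without matching on the nose), so the paper must first observe via~\eqref{p4.16:e1} that every element of \(\mathcal{B}_X(P_{d^{-1}})\) is a union of blocks \(X_i \times X_j\); on such ``saturated'' relations the pushforward does respect \(\circ\), and injectivity follows from the characterization~\eqref{p4.16:e4}. Thus your argument avoids a small implicit step, at the cost of spelling out the one nontrivial inclusion \(F(A\circ B)\subseteq F(A)\circ F(B)\), which you handle correctly.
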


\begin{proof}
Let \(P = \{X_j \colon j \in J\}\) be the partition of \(X\) corresponding to the equivalence relation \(\coherent{0}\) generated by the pseudometric \(d\). Using Example~\ref{ch2:ex5} and Theorem~\ref{p2.7}, for every \(r \in d(X^{2})\), we obtain the equality
\begin{equation}\label{p4.16:e1}
d^{-1}(r) = \bigcup_{\<x, y> \in d^{-1}(r)} X_{j_x} \times X_{j_y},
\end{equation}
where \(X_{j_x}\) and  \(X_{j_y}\) are the blocks of \(P\) such that \(x \in X_{j_x}\) and \(y \in X_{j_y}\).

By Proposition~\ref{ch2:p1}, the elements of the Cartesian square \(Y^{2}\) are the ordered pairs \(\<X_i, X_j>\),
\[
Y^{2} = \{\<X_i, X_j> \colon X_i, X_j \in P\}.
\]
Let us consider a mapping \(\Phi \colon \mathcal{B}_X \to \mathcal{B}_Y\) such that, for every \(A \subseteq X^{2}\),
\begin{equation}\label{p4.16:e2}
(\<X_i, X_j> \in \Phi(A)) \Leftrightarrow (\exists \<x,y> \in A \text{ with } x \in X_i \text{ and } y \in X_j).
\end{equation}
From~\eqref{p4.16:e1} and \eqref{p4.16:e2} it follows that
\begin{equation}\label{p4.16:e3}
\Phi(P_{d^{-1}}) = P_{\tilde{d}^{-1}}
\end{equation}
and
\begin{equation}\label{p4.16:e4}
\bigl(\Phi(A) = \Phi(B)\bigr) \Leftrightarrow \left(\bigcup_{\<x, y> \in A} X_{j_x} \times X_{j_y} = \bigcup_{\<x, y> \in B} X_{j_x} \times X_{j_y}\right)
\end{equation}
for all \(A\), \(B \in \mathcal{B}_X(P_{d^{-1}})\), where \(X_{j_x}\) and \(X_{j_y}\) are defined as in~\eqref{p4.16:e1}.

Let \(\<X_i, X_j> \in \Phi(d^{-1}(r) \circ d^{-1}(s))\). Then there is \(\<x, y> \in d^{-1}(r) \circ d^{-1}(s)\) with \(x \in X_i\) and \(y \in X_j\) for some \(i\), \(j \in J\). Consequently, there is \(z \in X\) such that
\begin{equation}\label{p4.16:e5}
\<x, z> \in d^{-1}(r) \text{ and } \<z, y> \in d^{-1}(s).
\end{equation}
Let \(k \in J\) be such that \(z \in X_k\). Using Proposition~\ref{ch2:p1} we see that \eqref{p4.16:e5} holds if and only if
\[
X_i \times X_k \subseteq d^{-1}(r) \text{ and } X_k \times X_j  \subseteq d^{-1}(s).
\]
Now from~\eqref{p4.16:e2} it follows that
\[
\<X_i, X_k> \in \Phi(d^{-1}(r)) \text{ and } \<X_k, X_j> \in \Phi(d^{-1}(s)).
\]
Thus, we have
\[
\<X_i, X_j> \in \Phi(d^{-1}(r)) \circ \Phi(d^{-1}(s)).
\]
It implies the inclusion
\[
\Phi(d^{-1}(r) \circ d^{-1}(s)) \subseteq \Phi(d^{-1}(r)) \circ \Phi(d^{-1}(s)).
\]
The converse inclusion can be proved similarly. The equality
\begin{equation}\label{p4.16:e6}
\Phi(d^{-1}(r) \circ d^{-1}(s)) = \Phi(d^{-1}(r)) \circ \Phi(d^{-1}(s))
\end{equation}
follows. Since \(P_{d^{-1}}\) and \(P_{\tilde{d}^{-1}}\), respectively, are the set of generators of \(\mathcal{B}_X(P_{d^{-1}})\) and \(\mathcal{B}_Y(P_{\tilde{d}^{-1}})\), respectively, from~\eqref{p4.16:e3}, \eqref{p4.16:e4} and \eqref{p4.16:e6} it follows that the mapping \(\Phi|_{\mathcal{B}_X (P_{d^{-1}})}\) is an isomorphism of \(\mathcal{B}_X(P_{d^{-1}})\) and \(\mathcal{B}_Y(P_{\tilde{d}^{-1}})\).
\end{proof}

Proposition~\ref{p4.16} shows, in particular, that the converse to Proposition~\ref{p4.14} is false in general.

\begin{example}\label{ex4.17}
Let \((X, d)\) and \((Y, \rho)\) be nonempty pseudometric spaces with the zero pseudometrics and let \(|X| \neq |Y|\). Then the semigroups \(\mathcal{B}_X(P_{d^{-1}})\) and \(\mathcal{B}_Y (P_{\rho^{-1}})\) are isomorphic to the trivial group (consisting of a single element) but \(d\) and \(\rho\) are not combinatorially similar.
\end{example}

Our next goal is to describe up to isomorphism the algebraic structure of the semigroups \(\mathcal{B}_X(P_{d^{-1}})\) for strongly rigid pseudometrics~\(d\). To do this, we recall some concepts of semigroup theory.

In what follows we say that a nonempty subset \(\mathcal{H}_0\) of a semigroup \((\mathcal{H}, *)\), \(\mathcal{H}_0 \subseteq \mathcal{H}\), is a \emph{subsemigroup} of \((\mathcal{H}, *)\) if \(\mathcal{H}_0 * \mathcal{H}_0 \subseteq \mathcal{H}_0\). Moreover, a subsemigroup \(\mathcal{H}_1\) of a semigroup \((\mathcal{H}, *)\) is an \emph{ideal} of \(\mathcal{H}\) if 
\[
\mathcal{H}_1 * \mathcal{H} \subseteq \mathcal{H}_1 \quad \text{and} \quad \mathcal{H} * \mathcal{H}_1 \subseteq \mathcal{H}_1
\]
holds. 

\begin{remark}\label{r4.21}
We write
\begin{equation}\label{e4.10}
A*B := \{x*y \colon x \in A,\ y \in B\}
\end{equation}
for all nonempty subsets \(A\) and \(B\) of \((\mathcal{H}, *)\).
\end{remark}

Let \((\mathcal{S}, *)\) be an arbitrary semigroup and let \(\{e\}\) be a single-point set such that \(e \notin \mathcal{S}\). We can extend the multiplication \(*\) from \(\mathcal{S}\) to \(\mathcal{S} \cup \{e\}\) by the following rule:
\begin{equation}\label{e4.11}
e*e = e \quad \text{and} \quad e * x = x *e = x
\end{equation}
for every \(x \in \mathcal{S}\). Following~\cite{Clifford1961} we use the notation
\begin{equation}\label{e4.12}
\mathcal{S}^{1} := \begin{cases}
\mathcal{S}, & \text{if \((\mathcal{S}, *)\) has an identity element}\\
\mathcal{S} \cup \{e\}, & \text{otherwise}.
\end{cases}
\end{equation}
It is clear that \(e\) is an identity element of \((\mathcal{S}^{1}, *)\). Thus the semigroup \((\mathcal{S}^{1}, *)\) is obtained from \((\mathcal{S}, *)\) by ``adjunction of an identity element to \((\mathcal{S}, *)\)''.

Let \((\mathcal{S}, *)\) be a semigroup. If \(\mathcal{S}\) is a single-point set, \(\mathcal{S} = \{e\}\), then we conclude that \(e\) is the identity element of \((\mathcal{S}, *)\). Let \(|\mathcal{S}| \geqslant 2\). Then we call \(\theta \in \mathcal{S}\) a \emph{zero} element or simply zero of \((\mathcal{S}, *)\) if
\[
\theta * s = s * \theta = \theta
\]
holds for every \(s \in \mathcal{S}\). A zero element, if it exists, is unique.

An element \(i \in \mathcal{S}\) is an \emph{idempotent element} of \((\mathcal{S}, *)\) if
\[
i^2 = i * i = i.
\]
It is clear that the identity element \(e\) and the zero \(\theta\) are idempotents. We will say that \(e\) and \(\theta\) are the \emph{trivial idempotent elements}. In the sequel ``a nontrivial idempotent'' means an idempotent which is not the zero element if such exists and which is not the identity element if such exists. A semigroup is a \emph{band} if every element of this semigroup is idempotent.

\begin{definition}[{\cite{Dovgoshey2019}}]\label{d4.14}
Let \((\mathcal{H}, *)\) be a semigroup and let \(\mathcal{C}\) be an ideal of \((\mathcal{H}, *)\). The semigroup \((\mathcal{H}, *)\) is a \emph{band of subsemigroups with core} \(\mathcal{C}\) if \(\mathcal{C} \neq \mathcal{H}\) and there is a partition \(\{\mathcal{H}_{\alpha} \colon \alpha \in \Omega\}\) of the set \(\mathcal{H} \setminus \mathcal{C}\) such that every \(\mathcal{H}_{\alpha}\) is a subsemigroup of \(\mathcal{H}\) and \(\mathcal{H}_{\alpha_1} * \mathcal{H}_{\alpha_2} \subseteq \mathcal{C}\) holds for all distinct \(\alpha_1\), \(\alpha_2 \in \Omega\).
\end{definition}

If \(\mathcal{H}\) is a band of subsemigroups with core \(\mathcal{C}\), then we write
\[
\mathcal{H} \approx \{\mathcal{H}_\alpha \colon \alpha \in \Omega\} \sqcup \mathcal{C}.
\]

Recall that a semigroup \((\mathcal{H}, *)\) is a \emph{union of band of subsemigroups} \(\mathcal{H}_{\alpha}\), \(\alpha \in \Omega\), of \((\mathcal{H}, *)\) if \(\{\mathcal{H}_{\alpha} \colon \alpha \in \Omega\}\) is a partition of \(\mathcal{H}\) and, moreover, for every pair of distinct \(\alpha\), \(\beta \in \Omega\) there is \(\gamma \in \Omega\) such that \(\mathcal{H}_{\alpha} * \mathcal{H}_{\beta} \subseteq \mathcal{H}_{\gamma}\) (see, for example, \cite[p.~25]{Clifford1961}). The above defined band of subsemigroups with given core can be considered as a special case of the union of band of subsemigroups. Indeed, if \(\mathcal{H} \approx \{\mathcal{H}_\alpha \colon \alpha \in \Omega\} \sqcup \mathcal{C}\), then \(\mathcal{H}\) is the union of band of subsemigroups \(\mathcal{H}_\alpha\), \(\alpha \in \Omega\), and \(\mathcal{C}\).

Let us denote by \(\mathbf{H}_1\) the class of all semigroups \((\mathcal{H}, *)\) such that either \(|\mathcal{H}| = 1\) or \((\mathcal{H}, *)\) satisfies the following conditions:
\begin{enumerate}
\item[\((1)\)] \(\mathcal{H}\) contains a zero element \(\theta\).
\item[\((2)\)] The equality 
\begin{equation*}
x * y = \theta
\end{equation*}
holds for all distinct idempotent elements \(x\), \(y \in \mathcal{H}\).
\item[\((3)\)] If \(i_{l}\) and \(i_{r}\) are nontrivial idempotent elements of \(\mathcal{H}\), then there is a unique nonzero \(a \in \mathcal{H}\) such that
\begin{equation*}
a = i_{l} * a * i_{r}.
\end{equation*}
\item[\((4)\)] For every nonzero \(a \in \mathcal{H}\) there is a unique pair \((i_{la}, i_{ra})\) of nontrivial idempotent elements of \(\mathcal{H}\) such that
\begin{equation*}
a = i_{la} * a * i_{ra}.
\end{equation*}
\item[\((5)\)] For all nonzero \(a\), \(b \in \mathcal{H}\) and for all pairs \((i_{la}, i_{ra})\) and \((i_{lb}, i_{rb})\) of nontrivial idempotent elements of \(\mathcal{H}\) such that \(a = i_{la} * a * i_{ra}\) and \(b = i_{lb} * b * i_{rb}\) and \(i_{ra} = i_{lb}\) it holds that \(a * b \neq \theta\).
\end{enumerate}

\begin{remark}\label{r4.22}
By Lemma~3.13 of \cite{Dovgoshey2019} a semigroup \((\mathcal{H}, *) \in \mathbf{H}_1\) contains an identity element if and only if \(|\mathcal{H}| = 1\). Thus, every nonzero, idempotent element is nontrivial if \(|\mathcal{H}| \geqslant 2\), although this result is not needed in the present paper.
\end{remark}

\begin{example}\label{ex4.21}
A semigroup \((\mathcal{H}, *)\) with \(|\mathcal{H}| = 2\) having a zero element \(\theta\) and an identity element \(e\) (e.g., \((\mathcal{H}, *) = (\mathbb{Z}_2, \cdot)\)) satisfies conditions \((1)\)--\((3)\) and \((5)\) but not condition \((4)\) although for every nonzero \(a \in \mathcal{H}\) there is a unique pair \((i_{la}, i_{ra})\) of \emph{nonzero} idempotents such that \(a = i_{la}* a * i_{ra}\).
\end{example}

\begin{theorem}\label{t4.15}
Let \((\mathcal{H}, *)\) be a semigroup. Then the following conditions~\ref{t4.15:s1} and \ref{t4.15:s2} are equivalent:
\begin{enumerate}
\item\label{t4.15:s1} There are a nonempty set \(X\) and a nondiscrete, strongly rigid pseudometric \(d \colon X^{2} \to \RR\) such that the semigroup \((\mathcal{H}, *)\) is isomorphic to \(\mathcal{B}_X(P_{d^{-1}})\).
\item\label{t4.15:s2} The semigroup \((\mathcal{H}, *)\) is obtained by adjunction of an identity element to a semigroup \(\widehat{\mathcal{H}}\) for which the following statements hold:
\begin{enumerate}
\item [\((ii_1)\)] \(\widehat{\mathcal{H}}\) is a band of subsemigroups with core \(\mathcal{C}\),
\[
\widehat{\mathcal{H}} \approx \{\mathcal{H}_\alpha \colon \alpha \in \Omega\} \sqcup \mathcal{C},
\]
such that \(\mathcal{C} \in \mathbf{H}_{1}\) and \(\mathcal{H}_{\alpha}\) is a group of order \(2\) for every \(\alpha \in \Omega\).
\item [\((ii_2)\)] The double inequality \(3 \leqslant |\Omega| \leqslant \mathfrak{c}\) holds.
\item [\((ii_3)\)] The set \(\widehat{E}\) of all idempotents of \(\widehat{\mathcal{H}}\) is a commutative band.
\item [\((ii_4)\)] If \(e_1\), \(e_2\) are two distinct nontrivial idempotents of \(\mathcal{C}\), then there is a unique \(e \in \widehat{E}\setminus \mathcal{C}\) such that
\begin{equation}\label{t4.15:e1}
e_1 = e_1*e \quad \text{and} \quad e_2 = e_2*e.
\end{equation}
Conversely, if \(e \in \widehat{E} \setminus \mathcal{C}\), then there are exactly two distinct nontrivial \(e_1\), \(e_2 \in \mathcal{C} \cap \widehat{E}\) such that~\eqref{t4.15:e1} holds.
\item [\((ii_5)\)] For every \(x \in \widehat{E} \cap \mathcal{C}\) and every \(y \in \widehat{\mathcal{H}} \setminus \widehat{E}\) the equality \(x*y=\theta\) (\(y*x = \theta\)) holds if and only if \(x*y\) (\(y*x\)) is idempotent.
\end{enumerate}
\end{enumerate}
\end{theorem}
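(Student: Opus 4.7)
The plan is to prove (i) $\Leftrightarrow$ (ii) by first enumerating every element of $\mathcal{B}_X(P_{d^{-1}})$ for a nondiscrete, strongly rigid pseudometric, matching the outcome to the axioms of (ii), and then reversing the construction.

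For (i) $\Rightarrow$ (ii), I would first invoke Proposition~\ref{p4.16} to pass to the metric identification $\tilde{d}$ on $Y := X/\coherent{0}$: by Example~\ref{ex4.3} the metric $\tilde{d}$ is strongly rigid, it is nondiscrete because $d$ is, and $\mathcal{B}_X(P_{d^{-1}}) \cong \mathcal{B}_Y(P_{\tilde{d}^{-1}})$. By Corollary~\ref{c4.14}, the generators of $\mathcal{B}_Y(P_{\tilde{d}^{-1}})$ are $\Delta_Y$ and the symmetric two-point relations $A_{yy'} := \{\<y,y'>, \<y',y>\}$ for distinct $y, y' \in Y$, and strong rigidity together with nondiscreteness force $|Y| \geqslant 3$. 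A direct calculation of the products $A_{y_1 y_2} \circ A_{y_3 y_4}$ (three subcases: disjoint indices, one common index, same pair) and an easy induction on word length show that $\mathcal{B}_Y(P_{\tilde{d}^{-1}})$ consists exactly of $\Delta_Y$, $\varnothing$, the singletons $\{\<y_1, y_2>\}$, the generators $A_{yy'}$, and the diagonal pairs $E_{yy'} := \{\<y,y>, \<y',y'>\}$. Setting $\widehat{\mathcal{H}} := \mathcal{B}_Y(P_{\tilde{d}^{-1}}) \setminus \{\Delta_Y\}$, $\mathcal{C} := \{\varnothing\} \cup \{\text{singletons}\}$, and $\mathcal{H}_{\{y, y'\}} := \{A_{yy'}, E_{yy'}\}$, each axiom $(ii_1)$–$(ii_5)$ then reduces to a mechanical check: $\mathcal{C}$ is visibly an ideal, each $\mathcal{H}_\alpha$ is a group of order $2$ with identity $E_\alpha$, $\mathcal{H}_\alpha \circ \mathcal{H}_\beta \subseteq \mathcal{C}$ for $\alpha \neq \beta$, and $|\Omega| = |\tilde{d}(Y^2) \setminus \{0\}|$ satisfies $3 \leqslant |\Omega| \leqslant \mathfrak{c}$; the idempotents form the commutative band $\widehat{E} = \{\varnothing\} \cup \{\{\<y,y>\}\} \cup \{E_{yy'}\}$; $(ii_4)$ holds since $\{\<y_1,y_1>\} \circ E_{y_3 y_4} = \{\<y_1,y_1>\}$ iff $y_1 \in \{y_3, y_4\}$; and $(ii_5)$ together with $\mathcal{C} \in \mathbf{H}_1$ follow from the singleton arithmetic $\{\<y_1,y_2>\} \circ \{\<y_3,y_4>\} = \{\<y_1,y_4>\}$ if $y_2 = y_3$, else $\varnothing$.

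For (ii) $\Rightarrow$ (i), I would construct the metric and isomorphism by reading off data from $(\mathcal{H}, *)$. Let $I$ be the set of nontrivial idempotents of $\mathcal{C}$; by $(ii_4)$ there is a bijection between unordered pairs from $I$ and $\widehat{E}\setminus \mathcal{C}$, so $|\Omega|$ equals the number of such pairs, and $(ii_2)$ translates to $3 \leqslant |I| \leqslant \mathfrak{c}$. Using the technique in the proof of Theorem~\ref{ch2:p7}, choose an injection of the unordered pairs of $I$ into $[1, 2^{1/2}]$ to obtain a strongly rigid, nondiscrete metric $d \colon I^{2} \to \RR$. By the already-proved direction (i) $\Rightarrow$ (ii) applied to $d$, the semigroup $\mathcal{B}_I(P_{d^{-1}})$ has exactly the structure enumerated above. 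Define $\varphi \colon \mathcal{H} \to \mathcal{B}_I(P_{d^{-1}})$ on the five families as follows: the adjoined identity $\mapsto \Delta_I$; the zero $\theta \mapsto \varnothing$; a nontrivial idempotent $i \in \mathcal{C}$ $\mapsto \{\<i, i>\}$; a nonzero non-idempotent $a \in \mathcal{C}$ (with unique pair $(i_{la}, i_{ra})$ from Condition (4) of $\mathbf{H}_1$) $\mapsto \{\<i_{la}, i_{ra}>\}$; each $e \in \widehat{E}\setminus \mathcal{C}$ (associated via $(ii_4)$ with $\{i_1, i_2\}$) $\mapsto E_{i_1 i_2}$; and the non-identity element of the group $\mathcal{H}_\alpha$ whose identity has image $E_{i_1 i_2}$ $\mapsto A_{i_1 i_2}$.

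The main obstacle will be verifying the multiplicativity of $\varphi$ across the five families, in particular for products that cross different groups $\mathcal{H}_\alpha$ or mix group elements with core elements. For $a \in \mathcal{H}_\alpha$ and $b \in \mathcal{H}_\beta$ with $\alpha \neq \beta$, by $(ii_1)$ the product lies in $\mathcal{C}$, and Conditions (3)–(4) of $\mathbf{H}_1$ pin down its left–right idempotent pair uniquely; the claim is that this pair, decoded through $(ii_4)$, matches the indices that arise in $\mathcal{B}_I(P_{d^{-1}})$ from intersecting the two unordered pairs labeling $\alpha$ and $\beta$. For products of a group element with an element of $\mathcal{C}$, axiom $(ii_5)$ pinpoints exactly when the outcome is $\theta$ and hence, by uniqueness of the left–right idempotent pair, which singleton it is otherwise. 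The verification is thus a case analysis on the five families driven by how many indices among the associated pairs coincide, with each case controlled by precisely one of the axioms in (ii).
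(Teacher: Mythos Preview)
Your approach is genuinely different from the paper's. The paper does not give a self-contained proof of Theorem~\ref{t4.15}; it simply states that the result follows from Theorem~\ref{p4.6} together with Theorems~4.6 and~4.9 of \cite{Dovgoshey2019} (which characterize the semigroups \(\mathcal{B}_X(P\otimes P_S^1)\) abstractly), adds Remark~\ref{r5.3} to patch a gap in \cite{Dovgoshey2019}, and supplies Theorem~\ref{ch2:th6} to repair the description of \(\mathbf{H}_1\). Your proposal instead works everything out directly on the level of the metric identification.

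Your forward direction \((i)\Rightarrow(ii)\) is correct and essentially complete: passing to \(\tilde{d}\) via Proposition~\ref{p4.16}, the generators of \(\mathcal{B}_Y(P_{\tilde{d}^{-1}})\) are exactly \(\Delta_Y\) and the two-point symmetric relations \(A_{yy'}\), and the closure under \(\circ\) is precisely the list \(\{\Delta_Y,\varnothing,\text{singletons},A_{yy'},E_{yy'}\}\) you give. The identification of \(\mathcal{C}\) with the singleton semigroup and the appeal to Theorem~\ref{ch2:th6} for \(\mathcal{C}\in\mathbf{H}_1\) are exactly right, and the remaining axioms are mechanical as you say.

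For \((ii)\Rightarrow(i)\) your map \(\varphi\) is the correct one, and your reduction of the cardinality constraint to \(3\leqslant|I|\leqslant\mathfrak{c}\) via \((ii_4)\) is sound. The case analysis you outline is also the right shape, but be aware that it is less local than your summary suggests: for instance, to show that \(i*e_\alpha=\theta\) when \(i\notin\{j_1,j_2\}\) (subcase: nontrivial idempotent of \(\mathcal{C}\) times an identity in \(\widehat{E}\setminus\mathcal{C}\)) you must combine the band property \((ii_3)\), the ideal property of \(\mathcal{C}\), condition \((2)\) of \(\mathbf{H}_1\), and the ``exactly two'' clause of \((ii_4)\); and to pin down the right idempotent \(i_r\) of a product \(i*y\) with \(y\) the non-identity of \(\mathcal{H}_\alpha\), you must feed \(i_r*e_\alpha\) back through the same subcase. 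Several cases are controlled by two or three axioms interacting rather than ``precisely one,'' so when you write the details, expect to iterate between \((ii_3)\), \((ii_4)\), \((ii_5)\), and conditions \((2)\)--\((5)\) of \(\mathbf{H}_1\). This intertwining is exactly what the cited theorems of \cite{Dovgoshey2019} carry out; your direct route is more transparent but reproduces that work rather than bypassing it.
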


Theorem~\ref{t4.15} follows from Theorem~\ref{p4.6} of the present paper and Theorems~4.6 and 4.9 of \cite{Dovgoshey2019}. The following remark eliminates a gap in the proof of Theorem~4.6 of \cite{Dovgoshey2019}.

\begin{remark}\label{r5.3}
To get the intended proof for \(x_3*x=\theta\) in \((4.49)\) in the proof of \cite[Theorem 4.6]{Dovgoshey2019}, show \(x_1\ne x_3*x\ne x_2\) by contradiction. Alternatively, since \(x_3\in E\cap \mathcal{C}\), \(x\in E\setminus \mathcal{C}\) and \(j_3\notin\{j_1,j_2\}\) and since \((4.30)\) holds in case \((4.32)\),  get directly
\[
\Phi_1(x_3*x) = \Phi(x_3)\circ\Phi_1(x) = X_{j_3}^2\circ (X_{j_1}^2\cup X_{j_2}^2) = \varnothing = \Phi_1(\theta)
\]
and apply the injectivity of \(\Phi_1\).
\end{remark}

For Theorem~3.2 of \cite{Dovgoshey2019}, used in the proof of Theorem~4.6 of \cite{Dovgoshey2019}, and its proof, see Theorem~\ref{ch2:th6} below.

We do not discuss the details of the proof of Theorem~\ref{t4.15} but it should be noted that:
\begin{itemize}
\item Theorem~4.9 \cite{Dovgoshey2019} describes the algebraic structure of the subsemigroups of \(\mathcal{B}_X\) generated by partitions \(P \otimes P_S^1\) of \(X^{2}\) with \(P\) a partition of \(X\) for the case when \(|P| \geqslant 2\). There the condition \(|P| \geqslant 2\) must also be replaced by the condition \(|P| \geqslant 3\).
\item To apply Theorem~\ref{p4.6}, recall that if \(X\) is a nonempty set and \(Q\) is a partition of \(X^{2}\), then \(P_{\Phi^{-1}} = Q\) for the surjection \(\Phi \colon X^{2} \to Q\) with \(\Phi(x, y) = A\) whenever \(\<x, y> \in A \in Q\).
\item If \(d \colon X^{2} \to \RR\) is a nondiscrete and strongly rigid pseudometric, then we have
\[
P_{d^{-1}} = P \otimes P_S^1
\]
with a partition \(P\) of \(X\) such that \(|P| \geqslant 3\).
\end{itemize}

The next proposition clarifies this situation.

Recall that a semigroup is said to be a \emph{null} semigroup if a product of every two elements is zero \cite[p.~3]{Howie1976}.

\begin{proposition}\label{p4.22}
The following conditions are equivalent for every semigroup \((\mathcal{H}, *)\):
\begin{enumerate}
\item \label{p4.22:c1} There are a nonempty set \(X\) and a discrete pseudometric \(d \colon X^{2} \to \RR\) such that \((\mathcal{H}, *)\) is isomorphic to \(\mathcal{B}_X (P_{d^{-1}})\).
\item \label{p4.22:c2} Exactly one of the following three conditions holds:
\begin{enumerate}
\item \((\mathcal{H}, *)\) is a trivial group.
\item \((\mathcal{H}, *)\) is a group of order \(2\).
\item \((\mathcal{H}, *)\) is obtained by adjunction of an identity element to a null semigroup of order \(2\).
\end{enumerate}
\end{enumerate}
\end{proposition}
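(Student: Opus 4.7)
The plan is to analyze the structure of $\mathcal{B}_X(P_{d^{-1}})$ for a discrete pseudometric $d$ directly, based on the number of equivalence classes of $\coherent{0}$, and then to exhibit a representing pseudometric for each of the three algebraic types in (ii).

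For $\ref{p4.22:c1} \Rightarrow \ref{p4.22:c2}$, fix a discrete pseudometric $d \colon X^{2} \to \RR$, let $P = \{X_j \colon j \in J\}$ be the partition of $X$ corresponding to $\coherent{0}$, and write $E := d^{-1}(0) = \bigcup_{j \in J} X_j^{2}$ (Lemma~\ref{l4.5}). If $|d(X^{2})| = 1$, then $d$ is the zero pseudometric, $P_{d^{-1}} = \{X^{2}\}$, and $\mathcal{B}_X(P_{d^{-1}})$ is a trivial group, giving case~(a). Otherwise $|d(X^{2})| = 2$ and $P_{d^{-1}} = \{E, N\}$ with $N := X^{2} \setminus E$; by Example~\ref{ch2:ex5} and Theorem~\ref{p2.7} the relation $E$ is the identity element of $\mathcal{B}_X(P_{d^{-1}})$, so only $N \circ N$ needs to be determined. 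Unwinding the definition of composition shows that $\<x, y> \in N \circ N$ if and only if there exists $k \in J$ distinct from the indices of the classes containing $x$ and $y$. When $|J| = 2$, this forces $\<x, y> \in E$, hence $N \circ N = E$, so $\mathcal{B}_X(P_{d^{-1}}) = \{E, N\}$ with $N^{2} = E$ is a group of order $2$, giving case~(b). When $|J| \geqslant 3$, such a $k$ exists for every $\<x, y> \in X^{2}$, hence $N \circ N = X^{2}$; one checks analogously that $N \circ X^{2} = X^{2} \circ N = X^{2} \circ X^{2} = X^{2}$, so the generated semigroup is $\{E, N, X^{2}\}$ with $E$ the identity, $X^{2}$ the zero, and $\{N, X^{2}\}$ a null subsemigroup of order $2$, giving case~(c).

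For $\ref{p4.22:c2} \Rightarrow \ref{p4.22:c1}$ it suffices to realize each type. Take $X$ with $|X| = 1$, $|X| = 2$, or $|X| = 3$, and let $d$ be the zero pseudometric when $|X| = 1$ and the discrete metric (set $d(x, y) = 1$ for $x \neq y$) otherwise; the computation above yields the desired isomorphism. Mutual exclusivity of (a), (b), (c) is immediate since the three candidate semigroups have orders $1$, $2$, and $3$ respectively.

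I expect no serious obstacle. The whole argument reduces to a clean case split on $|J| \in \{1, 2, \geqslant 3\}$, together with a brief inspection of the three-element multiplication table to identify it with the adjunction of an identity to a null semigroup of order~$2$. The only subtlety is to be careful that with exactly two classes the ``missing third index'' reasoning collapses $N \circ N$ onto $E$ rather than onto $X^{2}$, which is precisely what makes the group-of-order-$2$ case appear.
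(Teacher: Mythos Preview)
Your proposal is correct and follows essentially the same approach as the paper: a case split on $|J| \in \{1, 2, \geqslant 3\}$, direct computation of the products in each case, and identification of the resulting semigroup. The only cosmetic differences are that you give a uniform characterization of $N \circ N$ before specializing (the paper computes each case separately), and for $\ref{p4.22:c2} \Rightarrow \ref{p4.22:c1}$ you use sets of sizes $1$, $2$, $3$ with the discrete metric, while the paper keeps a single $X$ with $|X| \geqslant 3$ and varies the number of equivalence classes of $\coherent{0}$.
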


\begin{proof}
\(\ref{p4.22:c1} \Rightarrow \ref{p4.22:c2}\). Let \(d \colon X^{2} \to \RR\) be a discrete pseudometric for which the semigroups \((\mathcal{H}, *)\) and \(\mathcal{B}_X (P_{d^{-1}})\) are isomorphic. Since \(d\) is discrete, the inequality \(|d(X^{2})| \leqslant 2\) holds. Hence, we have \(|P_{d^{-1}}| \leqslant 2\). If \(|P_{d^{-1}}| = 1\) is valid, then \(d\) is the zero pseudometric on \(X\) and \(X^2 = \coherent{0}\) holds. Thus, \(P_{d^{-1}} = \{X^2\}\) holds, which implies \((ii_1)\).

Let us consider the case when \(|P_{d^{-1}}| = 2\). Then the relation \(d^{-1}(0) \in P_{d^{-1}}\) is an equivalence relation on \(X\) and \(d^{-1}(0) \neq X^2\). By Lemma~\ref{l4.5}, there is a partition \(P = \{X_j \colon j \in J\}\) of \(X\) such that
\[
|J| \geqslant 2 \quad \text{and} \quad d^{-1}(0) = \bigcup_{j \in J} X_j^{2}
\]
hold. If \(|J| = 2\), then we obtain (for \(J = \{1,2\}\))
\[
P_{d^{-1}} = \{X_1^2 \cup X_2^2, (X_1 \times X_2) \cup (X_2 \times X_1)\}.
\]
Write \(e := X_1^2 \cup X_2^2\) and \(e_1 := (X_1 \times X_2) \cup (X_2 \times X_1)\) for simplicity. A direct calculation shows that
\[
e_1 \circ e_1 = e \circ e = e \text{ and } e_1 \circ e = e \circ e_1 = e_1.
\]
Hence, \(\mathcal{B}_X(P_{d^{-1}})\) is a group of order \(2\). Since \((\mathcal{H}, *)\) and \(\mathcal{B}_X(P_{d^{-1}})\) are isomorphic, \((ii_2)\) holds.

Suppose now \(|J| \geqslant 3\). Then we write \(P_{d^{-1}} = \{e, e_0\}\), where 
\[
e := \bigcup_{j \in J} X_j^{2} \quad \text{and} \quad e_0 := X^{2} \setminus \left(\bigcup_{j \in J} X_j^{2} \right).
\]
We claim that the equality
\begin{equation}\label{p4.22:e1}
e_0^2 = X^2
\end{equation}
holds. Indeed, it is clear that \(e_0 \circ e_0 \subseteq X^2\), so it suffices to show that
\begin{equation}\label{p4.22:e2}
\<x, y> \in e_0^2
\end{equation}
holds for all \(x\), \(y \in X\). Let \(x\) and \(y\) be points of \(X\). Suppose \(\<x,y> \in e\). Then there is \(j_0 \in J\) such that
\[
x \in X_{j_0} \quad \text{and} \quad y \in X_{j_0}.
\]
Let \(j_1 \in J\) and \(j_1 \neq j_0\) and \(z \in X_{j_1}\). Then we obtain
\begin{equation}\label{p4.22:e3}
\<x, z> \in X_{j_0} \times X_{j_1} \subseteq e_0
\end{equation}
and
\begin{equation}\label{p4.22:e4}
\<z, y> \in X_{j_1} \times X_{j_0} \subseteq e_0.
\end{equation}
From~\eqref{p4.22:e3} and \eqref{p4.22:e4} it follows that
\begin{equation}\label{p4.22:e5}
\<x, y> \in e_0 \circ e_0 = e_0^2.
\end{equation}
Similarly, if \(\<x, y> \notin e\) holds, then there are distinct \(i_0\), \(i_1 \in J\) such that \(x \in X_{i_0}\), \(y \in X_{i_1}\) and \(i_2 \in J\) with
\[
i_0 \neq i_2 \neq i_1
\]
(recall that \(|J| \geqslant 3\)). Let \(z \in X_{i_2}\). Then
\[
\<x, z> \in X_{i_0} \times X_{i_2} \subseteq e_0 \quad \text{and} \quad \<z, y> \in X_{i_2} \times X_{i_1} \subseteq e_0
\]
are valid and, consequently, we have \eqref{p4.22:e5} again. Equality~\eqref{p4.22:e1} follows.

It is easy to prove that 
\[
e_0^2 \circ e_0 = e_0 \circ e_0^2 = e_0^2 \circ e = e \circ e_0^2 = e_0^2.
\]
Thus, the set \(\{e_0, e_0^2\}\) with the multiplication \(\circ\) is a null semigroup of order \(2\) and the semigroup \(\{e_0, e_0^2, e\}\) is obtained by adjunction of the identity element \(e\) to this semigroup.

\(\ref{p4.22:c2} \Rightarrow \ref{p4.22:c1}\). Suppose \(\ref{p4.22:c2}\) holds. Let \(X\) be a set with \(|X| \geqslant 3\). To prove \(\ref{p4.22:c1}\) it suffices to note that each of conditions \((ii_1)\), \((ii_2)\) and \((ii_3)\) describes \((\mathcal{H}, *)\) up to isomorphism and to consider discrete pseudometrics \(d \colon X^{2} \to \RR\) for which the equivalence relation \(\coherent{0}\) generates \(1\), or \(2\), or \(3\) distinct equivalence classes. The existence of such pseudometrics, which is obvious in itself, is also guaranteed by Corollary~\ref{ch2:p2}. 
\end{proof}

The following theorem shows that the semigroups belonging to the class \(\mathbf{H}_1\) (see Theorem~\ref{t4.15}) coincide, up to isomorphism, with the semigroups generated by all one-point subsets of the Cartesian square \(X^{2}\) with \(X \neq \varnothing\).

\begin{theorem}\label{ch2:th6}
Let \((\mathcal{H}, *)\) be a semigroup. Then the following two statements are equivalent:
\begin{enumerate}
\item\label{ch2:th6:s1} There are a nonempty set \(X\) and a partition \(P = \{X_j \colon j \in J\}\) of \(X\) such that \((\mathcal{H}, *)\) is isomorphic to \(\mathcal{B}_X(P\otimes P)\), where \(P \otimes P = \{X_i \times X_j \colon i, j \in J\}\).
\item\label{ch2:th6:s2} \((\mathcal{H}, *) \in \mathbf{H}_1\).
\end{enumerate}
\end{theorem}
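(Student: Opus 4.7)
My plan is to handle both directions by analyzing the multiplication table of \(\mathcal{B}_X(P\otimes P)\). The base case \(|J|=1\) gives a one-point semigroup, matching the \(|\mathcal{H}|=1\) branch of the definition of \(\mathbf{H}_1\), so I would concentrate on \(|J|\geqslant 2\). The key computation driving everything is
\[
(X_i\times X_j)\circ (X_k\times X_l) = \begin{cases} X_i\times X_l & \text{if } j=k, \\ \varnothing & \text{otherwise}. \end{cases}
\]
Consequently, for \(|J|\geqslant 2\) one has \(\mathcal{B}_X(P\otimes P)=(P\otimes P)\cup\{\varnothing\}\), the zero element is \(\varnothing\), and the nontrivial idempotents are precisely the diagonal rectangles \(X_j\times X_j\), \(j\in J\).

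For the implication \ref{ch2:th6:s1} \(\Rightarrow\) \ref{ch2:th6:s2} I would verify conditions \((1)\)--\((5)\) of the definition of \(\mathbf{H}_1\) directly inside \(\mathcal{B}_X(P\otimes P)\). Condition \((1)\) is immediate; \((2)\) reduces to \((X_i\times X_i)\circ (X_j\times X_j)=\varnothing\) for \(i\neq j\); \((3)\) says that \(a=(X_i\times X_i)\circ a\circ(X_j\times X_j)\) has the unique nonzero solution \(a=X_i\times X_j\); \((4)\) is its symmetric counterpart; and \((5)\) is just \((X_i\times X_j)\circ (X_j\times X_k)=X_i\times X_k\neq\varnothing\).

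For \ref{ch2:th6:s2} \(\Rightarrow\) \ref{ch2:th6:s1} with \(|\mathcal{H}|\geqslant 2\), I would build an explicit model. Let \(E^{*}\) denote the set of nontrivial idempotents of \(\mathcal{H}\), which is nonempty by \((4)\), and take \(X:=E^{*}\) with \(P:=\{\{j\}\colon j\in E^{*}\}\). Then \(P\otimes P\) consists of the singletons \(\{\langle i,j\rangle\}\subseteq X^{2}\), and the composition rule above yields \(\mathcal{B}_X(P\otimes P)=(P\otimes P)\cup\{\varnothing\}\). Define \(\phi\colon\mathcal{H}\to\mathcal{B}_X(P\otimes P)\) by \(\phi(\theta):=\varnothing\) and, for nonzero \(a\), \(\phi(a):=\{\langle i_{la},i_{ra}\rangle\}\), where \((i_{la},i_{ra})\) is the unique pair supplied by \((4)\). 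Bijectivity of \(\phi\) is then immediate from \((3)\) and \((4)\), which together make \(a\mapsto(i_{la},i_{ra})\) a bijection from \(\mathcal{H}\setminus\{\theta\}\) onto \(E^{*}\times E^{*}\).

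To verify that \(\phi\) is a homomorphism, I would first extract from \(i_{la}^{2}=i_{la}\) and \(a=i_{la}*a*i_{ra}\) the absorption identities \(i_{la}*a=a=a*i_{ra}\). Then for nonzero \(a,b\),
\[
a*b=(i_{la}*a*i_{ra})*(i_{lb}*b*i_{rb})=i_{la}*a*(i_{ra}*i_{lb})*b*i_{rb}.
\]
If \(i_{ra}\neq i_{lb}\), condition \((2)\) collapses the middle factor to \(\theta\), giving \(a*b=\theta\) and matching \(\phi(a)\circ\phi(b)=\varnothing\). If \(i_{ra}=i_{lb}\), condition \((5)\) gives \(a*b\neq\theta\), while \(i_{la}*(a*b)*i_{rb}=a*b\) combined with the uniqueness in \((4)\) forces \((i_{l(a*b)},i_{r(a*b)})=(i_{la},i_{rb})\), matching \(\phi(a)\circ\phi(b)=\{\langle i_{la},i_{rb}\rangle\}\). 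The main obstacle is exactly this last verification: once the absorption identities are in hand, the remaining work is bookkeeping, but the nonzero-product case is the one spot where conditions \((2)\), \((3)\), \((4)\) and \((5)\) are invoked simultaneously, and care is needed to see that each condition plays its distinct role in the argument.
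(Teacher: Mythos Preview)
Your proposal is correct and follows essentially the same route as the paper: both arguments hinge on the multiplication rule \((X_i\times X_j)\circ(X_k\times X_l)=X_i\times X_l\) or \(\varnothing\), and both build the isomorphism by letting \(X\) be the set of nontrivial idempotents with \(P\) the partition into singletons, matching nonzero elements with ordered pairs of nontrivial idempotents via conditions \((3)\) and \((4)\). The only cosmetic differences are that you run the bijection in the direction \(\mathcal{H}\to\mathcal{B}_X(P\otimes P)\) (the paper goes the other way) and that you isolate the absorption identities \(i_{la}*a=a=a*i_{ra}\) up front, which makes your homomorphism check a bit cleaner; one small point worth making explicit in both directions is that for \(|J|\geqslant 2\) (respectively \(|\mathcal{H}|\geqslant 2\)) the semigroup has no identity element, so ``nontrivial idempotent'' coincides with ``nonzero idempotent'' and \(\varnothing\in\mathcal{B}_X(P\otimes P)\).
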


\begin{proof}
In what follows we use the notation \(\mathcal{S}_{P\otimes P}\) as in~\cite{Dovgoshey2019} in place of \(\mathcal{B}_X(P\otimes P)\).

\(\ref{ch2:th6:s1} \Rightarrow \ref{ch2:th6:s2}\). Let \(P = \{X_j \colon j \in J\}\) be a partition of a nonempty set \(X\) such that the semigroup \((\mathcal{H}, *)\) is isomorphic to \((\mathcal{S}_{P\otimes P}, \circ)\). If \(|P| = 1\), then \(|\mathcal{H}| = 1\) implying \((\mathcal{H}, *) \in \mathbf{H}_1\). Suppose \(|P| \geqslant 2\). It was shown in the proof of Theorem~3.2 \cite{Dovgoshey2019} that now conditions \((1)\)--\((4)\) from the definition of \(\mathbf{H}_1\) are satisfied; here for \((4)\) note that it follows from formula \cite[(3.4)]{Dovgoshey2019}, i.e., \eqref{ch2:th6:e4} below, that \(\mathcal{S}_{P\otimes P}\) does not contain an identity element. We must only prove that \((\mathcal{H}, *)\) satisfies condition~\((5)\). We may assume that \((\mathcal{H}, *) = (\mathcal{S}_{P\otimes P}, \circ)\).

Let \(a\) and \(b\) be elements of \(\mathcal{S}_{P \otimes P}\) such that \(a \neq \varnothing \neq b\). If \((i_{la}, i_{ra})\) and \((i_{lb}, i_{rb})\) are the pairs of nontrivial idempotent elements of \(\mathcal{S}_{P \otimes P}\) for which
\[
a = i_{la} \circ a \circ i_{ra} \quad \text{and} \quad b = i_{lb} \circ b \circ i_{rb}
\]
and if \(i_{ra} = i_{lb}\), then using the equality
\begin{equation}\label{ch2:th6:e4}
(X_{j_1} \times X_{j_2}) \circ (X_{j_3} \times X_{j_4}) = \begin{cases}
\varnothing, & \text{if } X_{j_2} \neq X_{j_3}\\
X_{j_1} \times X_{j_4}, & \text{if } X_{j_2} = X_{j_3}
\end{cases}
\end{equation}
we can find \(i\), \(j\), \(k \in J\) such that
\[
a = X_i \times X_j \quad \text{and} \quad b = X_j \times X_k
\]
implying
\[
a \circ b = X_i \times X_k \neq \varnothing.
\]

\(\ref{ch2:th6:s2} \Rightarrow \ref{ch2:th6:s1}\). Let \((\mathcal{H}, *)\) satisfy condition~\ref{ch2:th6:s2}. Condition \ref{ch2:th6:s1} is trivially valid for \(|\mathcal{H}| = 1\). Suppose that \(|\mathcal{H}| \geqslant 2\). Let \(E\) be the set of all nontrivial idempotent elements of \(\mathcal{H}\) and let \(P\) be the partition of \(E\) into the one-point subsets of \(E\),
\[
P = \{\{i\} \colon i \in E\}.
\]

We claim that the semigroup \((\mathcal{S}_{P\otimes P}, \circ)\) is isomorphic to \((\mathcal{H}, *)\). 

By formula~\eqref{ch2:th6:e4} we see that every element of \((\mathcal{S}_{P\otimes P}, \circ)\) is either empty or has the form \(s = \{i_1\} \times \{i_2\}\) for some \(i_1\), \(i_2 \in E\). From the definition of the Cartesian product, we have the equality 
\[
\{i_1\} \times \{i_2\} = \{\<i_1, i_2>\};
\]
thus
\begin{equation}\label{ch2:th6:e7}
s = \{\<i_1, i_2>\},
\end{equation}
where \(\<i_1, i_2> \in E \times E\). Conditions \((3)\) and \((4)\) imply that there is a bijection \(F \colon \mathcal{S}_{P\otimes P} \to \mathcal{H}\) such that \(F(\varnothing) = \theta\), where \(\theta\) is the zero element of \((\mathcal{H}, *)\), and \(F(\{\<i_1, i_2>\}) = x\), where \(x\) is the unique nonzero element of \(\mathcal{H}\) such that 
\begin{equation*}
x = i_1 * x * i_2.
\end{equation*}
It suffices to show that \(F \colon \mathcal{S}_{P\otimes P} \to \mathcal{H}\) is an isomorphism. Let \(s_1\) and \(s_2\) belong to \(\mathcal{S}_{P\otimes P}\). We must show that 
\begin{equation*}
F(s_1) * F(s_2) = F(s_1 \circ s_2).
\end{equation*}
This equality is trivially valid if \(s_1 = \varnothing\) or \(s_2 = \varnothing\). Suppose now that \(s_1 \neq \varnothing \neq s_2\) but 
\begin{equation}\label{ch2:th6:e11}
s_1 \circ s_2 = \varnothing
\end{equation}
holds. Using~\eqref{ch2:th6:e7} we can find \(i_{1,1}\), \(i_{1,2}\), \(i_{2,1}\), \(i_{2,2} \in E\) such that
\begin{equation}\label{ch2:th6:e12}
s_1 = \{\<i_{1,1}, i_{1,2}>\} \quad \text{and} \quad s_2 = \{\<i_{2,1}, i_{2,2}>\}.
\end{equation}
From~\eqref{ch2:th6:e11} it follows that \(i_{1,2} \neq i_{2,1}\). By \((3)\), there are unique nonzero \(x_1\), \(x_2 \in \mathcal{H}\) such that
\begin{equation}\label{ch2:th6:e13}
x_1 = i_{1,1} * x_1 * i_{1,2}\quad \text{and} \quad x_2 = i_{2,1} * x_2 * i_{2,2}.
\end{equation}
By definition of \(F\), we have the equalities \(F(\varnothing) = \theta\) and \(F(s_i) = x_i\) for \(i = 1\), \(2\). Now using~\eqref{ch2:th6:e13}, condition \((2)\) and \(i_{1,2} \neq i_{2,1}\) we obtain
\begin{multline*}
F(s_1)*F(s_2) = x_1 * x_2 = i_{1,1} * x_1 * (i_{1,2} * i_{2,1}) * x_2 * i_{2,2} \\
= i_{1,1} * x_1 * \theta * x_2 * i_{2,2} = \theta = F(\varnothing).
\end{multline*}

Suppose now that \(s_1\), \(s_2\), \(s_1 \circ s_2\) are nonzero elements of \((\mathcal{S}_{P\times P}, \circ)\). Then \eqref{ch2:th6:e12} and \eqref{ch2:th6:e13} hold with \(i_{1,2} = i_{2,1}\) and \(s_1 \circ s_2 = \{\<i_{1,1}, i_{2,2}>\}\). Hence, we get \(x_1 * x_2 \neq \theta\) by \((5)\) and
\begin{equation}\label{ch2:th6:e14}
F(s_1)*F(s_2) = (i_{1,1} * x_1 * i_{1,2}) * (i_{2,1} * x_2 * i_{2,2}) = x_1 * x_2.
\end{equation}
Since \(i_{1,1}\), \(i_{1,2}\), \(i_{2,2}\) are idempotent, from~\eqref{ch2:th6:e13} we have
\[
i_{1,1} * x_1 * i_{1,2} = i_{1,1} * x_1 \quad \text{and} \quad i_{2,1} * x_2 * i_{2,2} = x_2 * i_{2,2}.
\]
Now using \eqref{ch2:th6:e14} we obtain
\begin{equation*}
F(s_1)*F(s_2) = i_{1,1} * (x_1 * x_2) * i_{2,2}.
\end{equation*}
By definition of \(F\), there is a unique nonzero \(y \in \mathcal{H}\) such that
\[
F(\{\<i_{1,1}, i_{2,2}>\}) = y = i_{1,1} * y * i_{2,2}.
\]
Thus
\[
F(s_1 \circ s_2) = F(s_1)*F(s_2)
\]
holds for all \(s_1\), \(s_2 \in \mathcal{S}_{P\otimes P}\). The implication \(\ref{ch2:th6:s2} \Rightarrow \ref{ch2:th6:s1}\) follows.
\end{proof}

The following example shows that condition \((5)\) is independent of conditions \((1)\)--\((4)\) in the definition of \(\mathbf{H}_1\).

\begin{example}\label{ex5.2}
Let \(X\) be a nonempty set and \(\{X_j \colon j \in J\}\) a partition of \(X\) with \(|J| \geqslant 2\). Write
\[
\mathcal{H} := \{X_i \times X_j \colon i, j \in J\} \cup \{\varnothing\},
\]
and define a binary operation \(*\) on \(\mathcal{H}\) such that \(x*y = \varnothing\) if \(\<x, y> = \<X_i \times X_j, X_j \times X_k>\) for some \(i\), \(j\), \(k \in J\) with \(i \neq j \neq k\) and such that \(x * y = x \circ y\) otherwise. Then, for all \(x\), \(y\), \(z \in \mathcal{H}\), we have 
\[
x * (y * z) = \varnothing \Leftrightarrow (x * y) * z = \varnothing
\]
and otherwise
\[
x * (y * z) = x \circ (y \circ z) = (x \circ y) \circ z = (x * y) * z.
\]
Thus, the operation \(*\) is associative with a zero element \(\varnothing\), and the semigroup \((\mathcal{H}, *)\) satisfies conditions \((1)\)--\((4)\), but \((\mathcal{H}, *)\) does not satisfy condition \((5)\).
\end{example}

The initial variant of the definition of \(\mathbf{H}_1\) \cite[Theorem~3.2]{Dovgoshey2019} contains a gap. In particular, condition \((5)\) was absent there.


\providecommand{\bysame}{\leavevmode\hbox to3em{\hrulefill}\thinspace}
\providecommand{\MR}{\relax\ifhmode\unskip\space\fi MR }
\providecommand{\MRhref}[2]{%
  \href{http://www.ams.org/mathscinet-getitem?mr=#1}{#2}
}
\providecommand{\href}[2]{#2}

\end{document}